\documentclass[11pt]{article}
\usepackage[T1]{fontenc}
\usepackage{lmodern}
\pagestyle{headings}
\setcounter{tocdepth}{1}   
\usepackage[headings]{fullpage}               
\usepackage{boxedminipage}
\usepackage{amsfonts}
\usepackage{amsmath} 
\usepackage{amssymb}
\usepackage{graphicx}
\usepackage{amsthm}
\usepackage{subfig}
\usepackage{tikz}
\usepackage{setspace}
\usepackage[colorlinks=true,linkcolor=blue,urlcolor=blue,citecolor=red]{hyperref}
\usepackage[nameinlink]{cleveref}
\usepackage{enumerate}
\usepackage{enumitem}
\usepackage{optidef}
\hypersetup{linktocpage=true,}
\newtheorem{theorem}{Theorem}[section]
\newtheorem{lemma}[theorem]{Lemma}

\newtheorem{proposition}[theorem]{Proposition}
\newtheorem{corollary}[theorem]{Corollary}

\theoremstyle{definition}\newtheorem{definition}[theorem]{Definition}
\theoremstyle{definition}\newtheorem{example}[theorem]{Example}
\theoremstyle{definition}\newtheorem{remark}[theorem]{Remark}

\newcommand{\rank}{\operatorname{rank}}

\newcommand{\conv}{\operatorname{conv}}
\newcommand{\esssup}{\operatorname{ess}\operatorname{sup}}

\tikzstyle{vertex}=[circle, draw, fill=black, inner sep=0pt, minimum size=4pt]
\tikzstyle{blankvertex}=[circle, draw=white, fill=white, inner sep=0pt, minimum size=4pt]
\tikzstyle{edge}=[line width=1.5pt]
\tikzstyle{labelsty}=[font=\scriptsize]

\begin{document}

\title{Edge-length preserving embeddings of graphs between normed spaces}

\author{Sean Dewar\thanks{School of Mathematics, University of Bristol, Bristol, BS8 1UG, UK. \texttt{sean.dewar@bristol.ac.uk}} \qquad 
Eleftherios Kastis\thanks{Mathematics and Statistics, Lancaster University, Lancaster, LA1 4YF, UK. {\tt l.kastis@lancaster.ac.uk}} \qquad  
Derek Kitson\thanks{Mathematics and Computer Studies, Mary Immaculate College, Limerick, V94 VN26, Ireland. {\tt Derek.Kitson@mic.ul.ie}}  \qquad 
William Sims\thanks{Computer \& Information Science \& Engineering, University of Florida, Gainesville, FL 32611, USA. \texttt{w.sims@ufl.edu}}}

\date{}
\maketitle

\begin{abstract}
The concept of graph flattenability, initially formalized by Belk and Connelly and later expanded by Sitharam and Willoughby, 
%explores the inherent geometric properties of graphs and 
extends the question of embedding finite metric spaces into a given normed space. A finite simple graph $G=(V,E)$ is said to be $(X,Y)$-flattenable if any set of induced edge lengths from an embedding of $G$ into a normed space $Y$ can also be realised by an embedding of $G$ into a normed space $X$. This property, being minor-closed, can be characterized by a finite list of forbidden minors. Following the establishment of fundamental results about $(X,Y)$-flattenability, we identify sufficient conditions under which it implies independence with respect to the associated rigidity matroids for $X$ and $Y$. We show that the spaces $\ell_2$ and $\ell_\infty$ serve as two natural extreme spaces of flattenability and discuss $(X, \ell_p )$-flattenability for varying $p$. We provide a complete characterization of $(X,Y)$-flattenable graphs for the specific case when $X$ is 2-dimensional and $Y$ is infinite-dimensional.
%for arbitrary $Y$, while we determine the forbidden  minors for $(\ell^2_\infty,Y)$-flattenability where $Y$ is infinite-dimensional. \textcolor{red}{SEAN: This is currently missing some cases} \textcolor{blue}{LEFTERIS: True, I put it as a placeholder, I will change that!}.
%Finally, we extend the notion of flattenability for countable graphs \textcolor{red}{SEAN: We could leave the countable graphs part out too since it is a rather minor part at the end. Thoughts?} \textcolor{blue}{LEFTERIS: I agree, just added to discuss it also with Derek today}.

\end{abstract}

{\small \noindent \textbf{MSC:} 05C10, 52A21, 52C25}

{\small \noindent \textbf{Keywords:} normed spaces, graph flattenability, graph realisability, finite metric space embeddings, rigidity theory}

\section{Introduction}

A \emph{realisation} of a (finite simple) graph $G=(V,E)$, with at least one edge, in a real normed linear space $(X,\|\cdot\|_X)$ (referred to simply as $X$ when the context is clear) is a map $p:V \rightarrow X$.
The \emph{measurement map} of the pair $(G,X)$ is the map
\begin{align*}
	f_{G,X} : X^V \rightarrow \mathbb{R}^E, ~ (x_v)_{v \in V} \mapsto \left( \| x_v-x_w \|_X \right)_{vw \in E},
\end{align*}
which sends each realisation to its corresponding vector of induced edge lengths.
Given another normed space $Y$,
we say that the graph $G$ can be \emph{flattened from $Y$ into $X$} (or more succinctly, $G$ is \emph{$(X,Y)$-flattenable}) if every vector of edge lengths induced by a realisation in $Y$ can also be induced by a realisation in $X$.
Equivalently,
a graph is $(X,Y)$-flattenable if and only if $f_{G,X}(X^V) \supseteq f_{G,Y}(Y^V)$.  
For example,
it is well-known that any $n$ points in $\ell_2$ can be isometrically embedded into $\ell_2^{n-1}$.
Within our framework of graph flattenability, this says that any graph with $n$ vertices must be $(\ell_2^{n-1},\ell_2)$-flattenable.

 The concept of flattenability was first formalized by Belk and Connelly \cite{belkconn07} when they characterized the set of $(\ell_2^d, \ell_2)$-flattenable graphs, referring to them as ``realisable'', for any $d \leq 3$.  
Sitharam and Willoughby \cite{SithWill} changed the name to ``flattenability'' and extended its definition to prove several results about the set of $(\ell^d_p, \ell_p)$-flattenable graphs, for all $d \geq 1$ and all $p \in [1,\infty]$.  
Notably, they connected the work in \cite{belkconn07} to that of Ball \cite{ball90} and Witsenhausen \cite{witt}, and to the area of metric space embeddings in general.

A \emph{minor} of a graph $G$ is any graph obtained via a sequence of edge deletions and edge contractions - i.e., deleting two vertices connected by an edge and adding a vertex whose neighborhood is the union of the neighborhoods of the deleted vertices.  
It is easy to see that if $G$ is $(X,Y)$-flattenable, then so are all of its minors.  
Hence, the famous Robertson–Seymour theorem \cite{rs04} shows that, for every ordered pair $(X,Y)$ of normed spaces, there is a finite list of forbidden minors that characterise the $(X,Y)$-flattenable graphs.
If these forbidden minors are known, then $(X,Y)$-flattenability can be determined in polynomial time \cite{rs95}.  

Belk and Connelly \cite{belkconn07} showed that the forbidden minors for the set of $(\ell^d_2,\ell_2)$-flattenable graphs are $K_3$ for $d=1$, $K_4$ for $d=2$, and $K_5$ and $K_{2,2,2}$ for $d = 3$ (see \Cref{fig:k222}).  
The forbidden minors for $(\ell^d_2,\ell_2)$-flattenability are unknown for all $d \geq 4$, but it is known that they must be a subset of the forbidden minors for a class of graphs called \emph{partial $d$-trees}.  
Resolving a conjecture posed in \cite{SithWill}, Fiorini et al.~\cite{fhjv2017} proved that the forbidden minors for the set of $(\ell^2_{\infty},\ell_{\infty})$-flattenable graphs (and also for the $(\ell^2_1,\ell_1)$-flattenable graphs) are $W_4$ and $K_4 +_e K_4$ (see \Cref{fig:w5join}).

Until now, flattenability research has primarily focused on determining the lowest dimension of an $\ell_p$-space into which a graph can be flattened. These known results are collected in \Cref{sec:lpknown}. In this paper we investigate flattenability between general normed spaces $X$ and $Y$. 
Basic results concerning $(X,Y)$-flattenability are contained in \Cref{sec:general}, including a full characterisation of $(X,Y)$-flattenable graphs when either $X$ or $Y$ is the real line. 

In \Cref{sec:ind}, we generalize the result of Sitharam and Willoughby \cite{SithWill}, identifying sufficient conditions under which $(X,Y)$-flattenability implies that the graph is \emph{independent in $X$}, in that the the rigidity map $f_{G,X}$ has a differentiable point where the Jacobian has rank $|E|$ (with the assumption that $X$ is finite-dimensional).  Namely, we show that if a graph $G$ is $(X,Y)$-flattenable with $X$ being finite-dimensional normed space, and if either $G$ is independent in $Y$ or $Y$ is infinite-dimensional, then $G$ is independent in $X$, provided that 
the norm $\|\cdot\|_X$ satisfies a mild smoothness condition.

In \Cref{sec:lp}, we highlight that the spaces $\ell_2$ and $\ell_\infty$ serve as two natural extreme spaces of flattenability (\Cref{t:linfl2}). 
Since every finite metric space can be embedded in $\ell_\infty$, every $(X,\ell_\infty)$-flattenable graph is $(X,Y)$-flattenable for any normed space $Y$. Conversely, if a graph $G$ is $(X,Y)$-flattenable, where $X$ is finite-dimensional and $Y$ is infinite-dimensional, then $G$ is also $(X,\ell_2)$-flattenable. 
In \Cref{t:pqembed2}, we show that the set of $p$-values such that $G$ is $(X,\ell_p)$-flattenable is closed, and is either empty or contains an interval of the form $[q,2]$, where $1\leq q\leq 2$. 

Forbidden minors for flattenability into normed planes are discussed in \Cref{sec:normedplane}.
Given $X$ is a normed plane and $Y$ is a normed space with dimension 3 or more,
we prove that $(X,Y)$-flattenability is split into two cases depending on whether or not $X$ is isometrically isomorphic to $\ell_\infty^2$ (\Cref{t:2dcombined}, with \Cref{t:2d1,t:l2tolinf} respectively for the individual cases).
These results can all be extended to allow for $X$ to have dimension 3 or more so long as $X$ is strictly convex and $Y$ is not (\Cref{t:strconv}).
\Cref{t:2d1} can also be improved when $X = \ell_2^2$, where we are able to fully characterise $(X,Y)$-flattenability for all choices of $Y$ (\Cref{t:ell2}).

In the last section (\Cref{sec:infinite}), we extend our definition of flattenability for countable simple graphs. In \Cref{t:infinite}, we prove that a countable graph is $(X,Y)$-flattenable to a finite-dimensional space $X$ if and only if it contains a complete tower of connected $(X,Y)$-flattenable subgraphs.

\section{Previously known results in \texorpdfstring{$\ell_p$}{lp} spaces}\label{sec:lpknown}
In this section, we present previously known results for flattenability in $\ell_p$ spaces.  First, we formally define the spaces $\ell_p$ and $\ell^d_p$.  For each $p \in [1,\infty)$ and index set $I$, define the linear spaces 
\begin{align*}
 \ell_p(I)= \left\{(x_i)_{i\in I} \in \mathbb{R}^I : \sum_{i\in I} |x_i|^p < \infty\right\} 
 \text{ and }  \ell_\infty(I)= \left\{(x_i)_{i\in I} \in \mathbb{R}^I : \sup_{i\in I} |x_i| < \infty\right\}. 
\end{align*}
and endow them with the respective norms
\begin{align*}
\|x\|_p := \left( \sum_{i \in I} |x_n|^p\right)^{1/p} \text{ and } \|x\|_\infty= \sup_{i\in I} |x_i|.
\end{align*}
It is well known that for every $p\in[1,\infty]$, the space $\ell_p(I)$ is complete with respect to the metric induced by its norm, so it is a Banach space. We denote the finite-dimensional normed space $\ell_p(\{1,\dots,d\})$ as $\ell_p^d$. This space corresponds to the space $\mathbb{R}^d$ with the $\|\cdot\|_p$ norm. We also denote the sequence space $\ell_p(\mathbb{N})$ as $\ell_p$.

\subsection{Flattenability for Complete Graphs}
Here we translate previous results about isometric embeddings of point sets into the language of $(\ell_p^d,\ell_p)$-flattenability, for various values of $p$ and $d$.  
In particular, \Cref{t:l1}, \Cref{t:lp}, and \Cref{t:linf}, below, provide lower-bounds on $d$ for which a given graph is $(\ell_p^d,\ell_p)$-flattenable, and upper-bounds on $d$ for which complete graphs on sufficiently many vertices are not $(\ell_p^d,\ell_p)$-flattenable.  

\begin{theorem}[\cite{ball90,witt}]\label{t:l1}
	Let $G$ be a graph with $n$ vertices.
	\begin{enumerate}
	    \item $G$ is $(\ell_1^d,\ell_1)$-flattenable for each $d \geq \binom{n}{2}$.
	    \item If $n \geq 4$ and $G$ is complete, then $G$ is not $(\ell_1^d,\ell_1)$-flattenable for each $d < \binom{n-2}{2}$.
	\end{enumerate}
\end{theorem}

\begin{theorem}[\cite{ball90}]\label{t:lp}
	Let $G$ be a graph with $n$ vertices and let $p \in (1,\infty)$.
	\begin{enumerate}
	    \item $G$ is $(\ell_p^d,\ell_p)$-flattenable for each $d \geq \binom{n}{2}$.
	    \item If $n \geq 3$, $p < 2$ and $G$ is complete,
	    then $G$ is not $(\ell_p^d,\ell_p)$-flattenable for each $d < \binom{n-1}{2}$.
	\end{enumerate}
\end{theorem}

\begin{theorem}[\cite{hol}]\label{t:linf}
	Let $G$ be a graph with $n$ vertices.
	\begin{enumerate}
	    \item If $n \geq 4$, then $G$ is $(\ell_\infty^d,\ell_\infty)$-flattenable for each $d \geq n-2$.
	    \item If $G$ is complete, then $G$ is not $(\ell_\infty^d,\ell_\infty)$-flattenable for each $d < \lfloor 2n/3 \rfloor$.
	\end{enumerate}
\end{theorem}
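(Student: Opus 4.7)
The plan is to handle the two halves separately. For part (i), I would prove the stronger statement that every $n$-point subset of $\ell_\infty$ embeds isometrically into $\ell_\infty^{n-2}$ when $n \geq 4$; composing the given realisation $p: V \to \ell_\infty$ with such an embedding then yields $q: V \to \ell_\infty^{n-2}$ with identical induced edge lengths, so $G$ is $(\ell_\infty^{n-2},\ell_\infty)$-flattenable and hence $(\ell_\infty^d,\ell_\infty)$-flattenable for every $d \geq n-2$.

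To build the isometric embedding into $\ell_\infty^{n-2}$, I would start from the Fréchet (Kuratowski) embedding $\Phi(x) = (\|x - p(v_i)\|_\infty)_{i=1}^n$, which produces an isometric copy of the $n$-point metric space inside $\ell_\infty^n$. Translating by $-\Phi(p(v_1))$ makes the first coordinate identically zero and brings the ambient dimension down to $n-1$, a maneuver that works for any finite metric space. The non-trivial step is the further reduction from $n-1$ to $n-2$, which cannot hold in general finite metric spaces and must genuinely use the hypothesis that the metric originates from $\ell_\infty$. Concretely, I would look for two basepoints $v_i, v_j$ whose positions along some ambient coordinate of $\ell_\infty$ lie on opposite sides of a suitable threshold; for such a pair, the two Fréchet coordinates attached to $v_i$ and $v_j$ satisfy a pointwise $\max$ or $\min$ relation across $p(V)$, allowing one of them to be absorbed. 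Verifying that such a pair exists whenever $n \geq 4$ is the step I expect to be the main obstacle in part (i), and is where the hypothesis $n \geq 4$ should enter.

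For part (ii), the plan is to exhibit, for each $n$, a realisation of $K_n$ in $\ell_\infty$ whose induced edge-length vector cannot be matched in $\ell_\infty^d$ for any $d < \lfloor 2n/3 \rfloor$. A natural candidate is a highly symmetric configuration such as an "antipodal triple" pattern placed inside $\ell_\infty^{\lfloor 2n/3 \rfloor}$, chosen so that the pattern of which coordinate witnesses the $\ell_\infty$-distance for each pair is combinatorially forced. Non-realisability in lower dimension would then be proved by a combinatorial counting argument: in $\ell_\infty^d$ every edge of $K_n$ must be assigned at least one witness coordinate where the maximum is achieved, and local consistency constraints on triples of vertices would show that each coordinate can serve at most a bounded number of triples; the factor $2/3$ is suggestive of an argument where every three vertices consume two coordinates. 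The hard part here is pinning down the exact combinatorial obstruction that sharpens the naive dimension count $d \geq n/2$ (coming from degree-of-freedom balancing against the $\binom{n}{2}$ edges modulo the $d$ translational symmetries of $\ell_\infty^d$) to the stated $\lfloor 2n/3 \rfloor$ bound.
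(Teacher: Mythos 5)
This theorem is imported by the paper from the reference \cite{hol} (with the surrounding discussion attributing the lower-bound technique to Ball \cite{ball90}); the paper gives no proof of its own, so your proposal can only be judged on its internal completeness, and both halves stop short of the actual content.

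For part (i), your reduction to ``every $n$-point subset of $\ell_\infty$ embeds isometrically into $\ell_\infty^{n-2}$'' is the right target, but everything before the final step is the trivial part: the Fr\'{e}chet map with $n-1$ basepoints already gives $\ell_\infty^{n-1}$ for \emph{any} finite metric space (note that the mechanism is dropping a basepoint, not translating --- translating by $-\Phi(p(v_1))$ sends the point $\Phi(p(v_1))$ to the origin but does not annihilate a coordinate of every image point). The passage from $n-1$ to $n-2$ is the entire theorem, and you explicitly leave it as ``the main obstacle'' with only a heuristic about basepoints on opposite sides of a threshold. Worse, your guiding principle --- that this step ``cannot hold in general finite metric spaces and must genuinely use the hypothesis that the metric originates from $\ell_\infty$'' --- is false: by Fr\'{e}chet's theorem (\Cref{t:embed} in this very paper) every finite metric space is a subset of $\ell_\infty$, so the hypothesis is vacuous and the $n-2$ bound is a statement about arbitrary $n$-point metric spaces. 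Any attempt that relies on extra structure from the ambient $\ell_\infty$ is therefore looking for leverage that is not there; the known proofs instead exploit a distinguished pair such as a diameter-realising pair of the metric space itself.

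For part (ii), your counting heuristic (``each coordinate can serve at most a bounded number of triples'') is not the right invariant and I do not see how to close it as stated; the degree-of-freedom count $d\geq n/2$ you mention is not a proof of anything. The argument that actually yields $\lfloor 2n/3\rfloor$ (Ball's, alluded to in the paragraph of the paper following the theorem) is: for a graph $H$ on $n$ vertices, consider the metric $d_H(u,v)=2$ if $uv\in E(H)$ and $1$ otherwise; in any realisation in $\ell_\infty^d$ all coordinates of the $n$ points lie in an interval of length $2$, so the pairs whose distance $2$ is witnessed in coordinate $i$ run between the vertices at the minimum and at the maximum of that coordinate, i.e.\ they form a complete bipartite graph. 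Hence $E(H)$ is covered by $d$ complete bipartite graphs. Taking $H$ to be a disjoint union of triangles (plus an extra edge when $n\equiv 2 \pmod 3$), no complete bipartite subgraph meets two components and each triangle needs two bipartite graphs, forcing $d\geq\lfloor 2n/3\rfloor$. Your ``witness coordinate'' instinct points in this direction, but the two missing ingredients --- the complete-bipartite structure of each witness class and the specific choice of $H$ --- are exactly where the theorem lives.
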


Although the upper and lower bounds in \Cref{t:l1,t:lp} are not tight for any complete graph,
the bounds described in \Cref{t:linf} are tight for sufficiently small values of $n$.
In particular, the upper and lower bounds tell us that the complete graph $K_n$ is $(\ell_\infty^{n-2},\ell_\infty)$-flattenable but not $(\ell_\infty^{n-3},\ell_\infty)$-flattenable for each $n \in \{4,5,6\}$.  
In \cite{fhjv2017}, it was shown that this also holds for $n=7$; specifically, the complete graph $K_7$ is $(\ell_\infty^5,\ell_\infty)$-flattenable but not $(\ell_\infty^4,\ell_\infty)$-flattenable.

The lower bound in \Cref{t:linf} can be improved asymptotically.
In \cite{ball90},
Ball proved that if the complete graph on $n$ vertices is $(\ell_\infty^d,\ell_\infty)$-flattenable,
then every graph with $n$ vertices can be covered by $d$ complete bipartite graphs.
Using this,
he proved that there exists a constant $c>0$ such that each complete graph $K_n$ is not $(\ell_\infty^d,\ell_\infty)$-flattenable for each $d < n - c n^{3/4}$.
Fiorini et al.~\cite{fhjv2017} later observed that this can be improved by using more recent results regarding bipartite graph coverings \cite{rodl}.

\begin{theorem}[\cite{ball90,rodl}]\label{t:linfasymm}
	There exists a constant $c>0$ such that each complete graph $K_n$ is not $(\ell_\infty^d,\ell_\infty)$-flattenable for each $d < n - c \log n$.
\end{theorem}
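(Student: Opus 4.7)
The plan is to combine two inputs already alluded to in the text: Ball's reduction from $(\ell_\infty^d,\ell_\infty)$-flattenability of $K_n$ to biclique covers of arbitrary graphs on $n$ vertices, and Rödl's extremal result exhibiting $n$-vertex graphs whose biclique cover number is as large as $n - c\log n$. The conclusion then follows by contraposition.

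First I would carry out (or, more likely, simply cite) Ball's reduction: if $K_n$ is $(\ell_\infty^d, \ell_\infty)$-flattenable, then for every graph $H$ on $V = \{1,\dots,n\}$ the edge set $E(H)$ can be covered by $d$ complete bipartite subgraphs. The mechanism is that the $\{1,2\}$-valued metric associated to $H$ (distance $1$ across an edge, $2$ across a non-edge) satisfies the triangle inequality and hence admits a Fréchet-style isometric embedding into $\ell_\infty$. By flattenability, this lifts to a realisation $p : V \to \ell_\infty^d$. For each coordinate $i$ and a suitable threshold $t_i$, the pair $(L_i,R_i) = (\{v : p(v)_i \le t_i\},\{v : p(v)_i \ge t_i + 1\})$ spans a biclique in $H$, and since $\|p(u)-p(v)\|_\infty = 1$ on every edge of $H$, at least one coordinate witnesses that edge. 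Collecting these over $i = 1,\dots,d$ yields the desired biclique cover of $H$.

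Second, I would invoke the Rödl-type result \cite{rodl} producing, for each sufficiently large $n$, a graph $H_n$ on $n$ vertices whose biclique cover number exceeds $n - c\log n$ for some absolute constant $c > 0$. Combined with the previous step, this forces $d \ge n - c\log n$ whenever $K_n$ is $(\ell_\infty^d, \ell_\infty)$-flattenable, which is exactly the contrapositive of the statement.

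The main obstacle is the second step: the purely combinatorial fact that there exist graphs with biclique cover number essentially as large as $n$. This is where the improvement over Ball's original bound $n - c n^{3/4}$ genuinely sits, and the proof relies on the Erd\H{o}s--Rödl line of work on bipartite covers of randomly or explicitly constructed graphs. Ball's reduction is, by comparison, a clean geometric observation; the only real subtlety there is the choice of threshold $t_i$ on each coordinate so that every edge of $H$ is captured by at least one biclique.
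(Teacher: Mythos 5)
Your overall strategy coincides with the paper's: \Cref{t:linfasymm} is stated there purely as a cited result, and the surrounding discussion makes clear that the intended argument is exactly the one you give, namely Ball's reduction (if $K_n$ is $(\ell_\infty^d,\ell_\infty)$-flattenable then every $n$-vertex graph admits a cover by $d$ complete bipartite graphs) combined with the result of \cite{rodl} that some $n$-vertex graphs require at least $n - c\log n$ bicliques in any such cover. So, modulo the two citations, your proof is the intended one, and your contrapositive bookkeeping is correct.

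However, the mechanism you sketch for Ball's reduction is wrong as written and would fail if carried out. With lengths $1$ on $E(H)$ and $2$ on non-edges, a pair $u \in L_i$, $v \in R_i$ only satisfies $|p(u)_i - p(v)_i| \geq 1$, and a non-edge can easily satisfy this too (its coordinate differences range anywhere up to $2$), so $(L_i,R_i)$ need not span a biclique of $H$. Moreover, a single threshold $t_i$ per coordinate cannot witness every edge realised in that coordinate: if three vertices have $i$-th coordinates $0$, $1$ and $2$, the pairs at coordinate-difference exactly $1$ are $(0,1)$ and $(1,2)$, and no single cut captures both. The correct construction runs the other way round: give length $2$ to the pairs you wish to cover and length $1$ to all remaining pairs (any $\{1,2\}$-valued symmetric function is a metric), embed via Fr\'{e}chet and flatten into $\ell_\infty^d$, and for each coordinate $i$ with $M_i - m_i = 2$ (where $m_i$ and $M_i$ are the minimum and maximum of the $i$-th coordinates) take the biclique between the fibres $\{v : p(v)_i = m_i\}$ and $\{v : p(v)_i = M_i\}$. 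Since all pairwise distances are at most $2$, a pair attains coordinate-difference $2$ in coordinate $i$ precisely when it joins these two extreme fibres, so each such fibre pair is a biclique consisting only of length-$2$ pairs, and every length-$2$ pair is caught in some coordinate. With that repair (or by simply citing \cite{ball90}, as you suggest), the argument is complete.
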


In \cite{ball90}, Ball also noted that his method for constructing asymptotic lower bounds cannot be improved past $n - c \log n$, since there exists $c' >0$ such that every graph with $n$ vertices can be covered by $n-c'\log n$ complete bipartite graphs.
This latter statement follows from the Ramsey number $R(n)$ being at most $4^n$.

\subsection{Forbidden minor characterisations for flattenability}

In this subsection, we present previously known finite forbidden minor characterisations for flattenability.  
In the Euclidean case, we highlight the results of Belk and Connelly in \Cref{t:2dflat} and \Cref{t:3dflat}, below.

\begin{theorem}[\cite{belkconn07}]\label{t:2dflat}
	For $d \in \{1,2\}$, a graph $G$ is $(\ell_2^d,\ell_2)$-flattenable if and only if it contains no $K_{d+2}$ minor.
\end{theorem}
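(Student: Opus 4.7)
The plan is to prove necessity via minor-closure together with an explicit forbidden configuration, and sufficiency via the classical structural description of $K_{d+2}$-minor-free graphs as partial $d$-trees. Since $(\ell_2^d,\ell_2)$-flattenability is preserved by taking minors, it suffices for necessity to show that $K_{d+2}$ itself is not $(\ell_2^d,\ell_2)$-flattenable. When $d=1$, placing the vertices of $K_3$ at the standard basis vectors $e_1,e_2,e_3\in\ell_2$ gives all three pairwise distances equal to $\sqrt{2}$, which cannot be realised by three collinear points since among three real numbers the largest pairwise difference equals the sum of the other two. When $d=2$, placing the vertices of $K_4$ at $e_1,\dots,e_4\in\ell_2$ realises it as a regular simplex with all pairwise distances $\sqrt{2}$, and no four coplanar points can be mutually equidistant (for example by a Cayley--Menger determinant computation).

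For sufficiency, invoke the classical fact that for $d\in\{1,2\}$ the $K_{d+2}$-minor-free graphs are exactly the partial $d$-trees, that is, subgraphs of $d$-trees. Fix a realisation $p:V\to\ell_2$ with induced edge-length vector $\ell=f_{G,\ell_2}(p)$. When $d=1$, $G$ is a forest, and we construct a realisation $q$ in $\mathbb{R}$ by rooting each component, placing the root at $0$, and inductively placing each child $v$ of a parent $u$ at $q(u)+\ell_{uv}$; the absence of non-tree edges means no further constraints need be checked.

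When $d=2$, extend $G$ to a $2$-tree supergraph $H$ and define $\tilde\ell_{uv}:=\|p(u)-p(v)\|_{\ell_2}$ for every edge $uv\in E(H)$. We realise $(H,\tilde\ell)$ in $\ell_2^2$ inductively along the standard $2$-tree construction, which attaches each new vertex $v$ to two previously placed vertices $u_1,u_2$ joined by an existing edge. The three lengths $\tilde\ell_{vu_1},\tilde\ell_{vu_2},\tilde\ell_{u_1u_2}$ are induced by the common $\ell_2$-realisation $p$, so they satisfy the triangle inequality; moreover, since $u_1u_2\in E(H)$, the inductive hypothesis already guarantees $\|q(u_1)-q(u_2)\|_{\ell_2^2}=\tilde\ell_{u_1u_2}$. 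Hence the two circles of radii $\tilde\ell_{vu_1}$ and $\tilde\ell_{vu_2}$ around $q(u_1),q(u_2)$ meet in $\ell_2^2$, and either intersection point can serve as $q(v)$. Restricting $q$ to $V(G)$ yields the required $\ell_2^2$-realisation of $G$.

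The main subtlety is the passage from $G$ to the $2$-tree $H$. A naive induction on $G$ alone would attempt to place a degree-$2$ vertex $v$ whose two neighbours $u_1,u_2$ are non-adjacent in $G$ using only an $\ell_2^2$-realisation of $G-v$, but in that case the distance $\|q(u_1)-q(u_2)\|_{\ell_2^2}$ need not equal $\|p(u_1)-p(u_2)\|_{\ell_2}$, and the triangle inequality required to place $v$ may fail. Enlarging to a $2$-tree forces every such pair to be an edge whose planar length is pinned down by the inductive realisation, which is precisely what makes the argument close.
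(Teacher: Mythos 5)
Your argument is correct, and it is essentially the approach this paper itself takes when it generalises the result in \Cref{sec:normedplane}: the paper quotes \Cref{t:2dflat} from Belk--Connelly without proof, but your sufficiency step (extend a $K_4$-minor-free graph to a $2$-tree and place vertices one at a time using the triangle inequality inherited from the $\ell_2$-realisation) is exactly the mechanism of \Cref{l:h1}, \Cref{p:h1} and \Cref{l:2d2}, and your necessity step (four mutually equidistant points exist in $\ell_2$ but not in the plane) is the equilateral-set argument of \Cref{t:2d1} via \Cref{t:petty}. Your closing remark about why one must pass to the $2$-tree $H$ rather than inducting on $G$ alone correctly identifies the one genuine subtlety.
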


\begin{theorem}[\cite{belkconn07}]\label{t:3dflat}
	A graph $G$ is $(\ell_2^3,\ell_2)$-flattenable if and only if it contains no $K_5$ or $K_{2,2,2}$ minor (see \Cref{fig:k222}).
\end{theorem}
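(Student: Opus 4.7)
I plan to prove the two directions separately.

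For the ``only if'' direction, I invoke the minor-closure of $(\ell_2^3, \ell_2)$-flattenability noted in the Introduction: it suffices to exhibit, for each of $K_5$ and $K_{2,2,2}$, an $\ell_2$-realisation whose induced edge-length vector is not attainable in $\ell_2^3$. For $K_5$, the regular $4$-simplex in $\ell_2^4$ makes all ten edges equal, and five mutually equidistant points do not fit in $\mathbb{R}^3$. For $K_{2,2,2}$ (the octahedron graph), I construct an explicit realisation in $\ell_2^4$ and verify---via the Cayley--Menger determinants of the five-point subsets, with the three ``diagonal'' distances between non-adjacent pairs treated as unknowns---that no completion yields a point configuration of affine dimension at most $3$.

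For the ``if'' direction, my plan is induction on $|V(G)|$ using a tree decomposition of $G$ with adhesions of size at most $3$. The first step is to establish a Wagner-style structural statement: every graph with no $K_5$ and no $K_{2,2,2}$ minor admits such a decomposition whose parts (``atoms'') are themselves $(\ell_2^3, \ell_2)$-flattenable. The inductive step then reduces to a gluing lemma: if $G$ is obtained by identifying two $(\ell_2^3,\ell_2)$-flattenable graphs $G_1$ and $G_2$ along a common vertex set $S$ with $|S| \leq 3$, then $G$ is itself $(\ell_2^3, \ell_2)$-flattenable. To prove this gluing lemma, I take an $\ell_2$-realisation of $G$, restrict it to $G_1$ and $G_2$, and flatten each to $\ell_2^3$. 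The two flattenings induce identical pairwise distances on $S$, so an ambient isometry of $\mathbb{R}^3$ aligns them along $S$---with exactly two choices (related by reflection through the plane spanned by $S$) when $|S| = 3$, and a positive-dimensional family of choices when $|S| \leq 2$---and concatenation yields a realisation of $G$ in $\ell_2^3$ inducing the prescribed edge lengths.

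The main obstacle is the structural theorem. Wagner's classical result handles $K_5$-minor-freeness through $3$-clique-sums of planar graphs together with one exceptional graph, but the additional exclusion of $K_{2,2,2}$---which is itself planar---does substantive geometric work: one must show it forces the planar pieces and their atomic components to lie in a list of small, explicitly $(\ell_2^3, \ell_2)$-flattenable graphs (for instance, complete graphs on at most four vertices, which are trivially flattenable since any four points of $\ell_2$ span an affine subspace of dimension at most $3$). Verifying both the refined decomposition and the atom-wise flattenability is the crux of the argument.
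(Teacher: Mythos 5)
First, note that the paper does not prove this statement at all: it is imported verbatim from Belk and Connelly \cite{belkconn07}, so your proposal can only be judged against that source, not against anything in this paper.

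Your ``only if'' direction is sound in outline for $K_5$, but for $K_{2,2,2}$ be aware that the obvious choice of edge lengths fails: the all-ones assignment is realised by the regular octahedron in $\mathbb{R}^3$, so you must locate a genuinely four-dimensional assignment and then actually carry out the Cayley--Menger elimination over the three unknown diagonals. That is a real (if finite) piece of work, not a formality. The serious gap, however, is in the ``if'' direction. The structural input you need is not a Wagner-style refinement of the $K_5$ theorem but the Arnborg--Proskurowski--Corneil characterisation of treewidth at most $3$: the minor-minimal graphs of treewidth $\geq 4$ are $K_5$, $K_{2,2,2}$, $V_8$ (the M\"{o}bius--Kantor graph) and $C_5\times K_2$ (the pentagonal prism). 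Excluding only $K_5$ and $K_{2,2,2}$ therefore does \emph{not} force your atoms into ``complete graphs on at most four vertices'': after decomposing along clique sums of size at most $3$ you are left with partial $3$-trees (which are handled by your gluing lemma together with the one-vertex extension step) \emph{and} with pieces containing $V_8$ or $C_5\times K_2$ minors. Proving that $V_8$ and $C_5\times K_2$ are themselves $(\ell_2^3,\ell_2)$-flattenable is the hard core of Belk's argument --- it occupies most of his paper and uses delicate energy-minimisation and stress arguments --- and your plan simply omits it. Two smaller repairs are also needed: your gluing lemma requires the adhesion set $S$ to be a clique (otherwise the two flattenings need not agree on the pairwise distances within $S$), so you must work with clique sums rather than arbitrary tree-decomposition adhesions; and when $|S|=3$ but the flattened image of $S$ is collinear or a single point, the ``two choices related by a reflection'' picture degenerates, though the gluing still goes through.
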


\begin{figure}[htp]
\begin{center}
\begin{tikzpicture}[scale=1.1]
        \node[vertex] (a) at (0,2) {};
        \node[vertex] (b) at (-1,1.2) {};
        \node[vertex] (c) at (-0.7,0) {};	
        \node[vertex] (d) at (0.7,0) {};
   	\node[vertex] (e) at (1,1.2) {};
    	
    	\draw[edge] (a)edge(b);
    	\draw[edge] (a)edge(c);
    	\draw[edge] (a)edge(d);
    	\draw[edge] (a)edge(e);
    	\draw[edge] (b)edge(c);
    	\draw[edge] (b)edge(d);
    	\draw[edge] (b)edge(e);
    	\draw[edge] (c)edge(d);
    	\draw[edge] (c)edge(e);
    	\draw[edge] (d)edge(e);
    \end{tikzpicture} \qquad \qquad
    \begin{tikzpicture}[scale=0.8]
        \node[vertex] (a) at (0,2) {};
    	\node[vertex] (b) at (1.732,-1) {};
        \node[vertex] (c) at (-1.732,-1) {};
    	
        \node[vertex] (1) at (0,-0.5) {};
    	\node[vertex] (2) at (-0.5,0.433) {};
        \node[vertex] (3) at (0.5,0.433) {};
    	
    	\draw[edge] (a)edge(b);
    	\draw[edge] (b)edge(c);
    	\draw[edge] (c)edge(a);
    	\draw[edge] (1)edge(2);
    	\draw[edge] (2)edge(3);
    	\draw[edge] (3)edge(1);
    	\draw[edge] (1)edge(b);
    	\draw[edge] (1)edge(c);
    	\draw[edge] (2)edge(a);
    	\draw[edge] (2)edge(c);
    	\draw[edge] (3)edge(a);
    	\draw[edge] (3)edge(b);
    \end{tikzpicture}
\end{center}
\caption{The complete graph $K_5$ (left) and the complete tripartite graph $K_{2,2,2}$ (right).}
\label{fig:k222}
\end{figure}
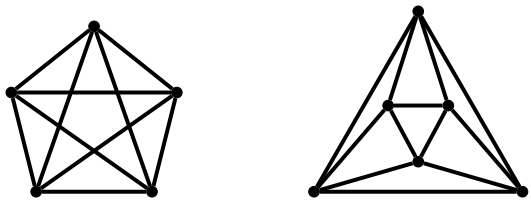

Witsenhausen showed in \cite{witt} that $K_4$ is  $(\ell_\infty^2,\ell_\infty)$-flattenable.  Sitharam and Willoughby \cite{SithWill} used convexity arguments on the so-called Cayley configuration space over specified non-edges of a $d$-dimensional framework to show that $K_5$ minus an edge is not $(\ell_\infty^2,\ell_\infty)$-flattenable.
From this, they conjectured that the wheel graph $W_4$ is the only forbidden minor for the class of $(\ell_\infty^2,\ell_\infty)$-flattenable graphs on at most $5$ vertices. 
Fiorini et al.~\cite{fhjv2017} verified this conjecture, determined the complete set of forbidden minors for $(\ell_{\infty}^2,\ell_{\infty})$-flattenability, and showed that these minors also completely characterise $(\ell_1^2,\ell_1)$-flattenability.  
We state these results in \Cref{t:infflat}.

\begin{theorem}[\cite{fhjv2017}]\label{t:infflat}
	For any $p \in \{1,\infty\}$, a graph $G$ is $(\ell_p^2,\ell_p)$-flattenable if and only if it contains no $W_4$ or $K_4 +_e K_4$ minor (see \Cref{fig:w5join}).
\end{theorem}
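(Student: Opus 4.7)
The plan is to first exploit the isometric isomorphism $\ell_1^2 \cong \ell_\infty^2$ given by the rotation $(x,y) \mapsto ((x+y)/\sqrt{2}, (x-y)/\sqrt{2})$, which reduces the statement to a single value of $p$; it will suffice to handle $p = \infty$, as the $p = 1$ case then follows because $(\ell_1^2,\ell_1)$- and $(\ell_\infty^2,\ell_\infty)$-flattenability for a fixed graph $G$ coincide after composing realisations with this rotation. Combined with the fact that $(X,Y)$-flattenability is closed under minors, the theorem reduces to two implications: that $W_4$ and $K_4 +_e K_4$ are both non-flattenable, and that every graph avoiding both as minors is flattenable.

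For the necessity direction, I would exhibit explicit edge-length vectors for $W_4$ and $K_4 +_e K_4$ realised in some $\ell_\infty^d$ with $d \geq 3$ that cannot be reproduced in $\ell_\infty^2$. For $W_4$, the observation of Sitharam and Willoughby that $K_5 - e$ is not $(\ell_\infty^2,\ell_\infty)$-flattenable via Cayley configuration-space convexity can be adapted: choose lengths forcing the rim 4-cycle and the hub distances to conflict with every possible placement of the four rim vertices in the $\ell_\infty^2$ unit squares around the hub. For $K_4 +_e K_4$, select generic lengths for each $K_4$ that force each copy, in any planar flattening, into a configuration aligned with one pair of coordinate axes; then show that the two forced alignments, together with the matching of the shared edge, cannot be reconciled in $\ell_\infty^2$.

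For sufficiency, the plan is to establish a structural description of graphs avoiding both $W_4$ and $K_4 +_e K_4$ as minors, showing that any such graph can be assembled from single edges, triangles, and $K_4$'s by taking clique-sums along at most two vertices (so in particular no two $K_4$'s share an edge). Each atomic piece is $(\ell_\infty^2,\ell_\infty)$-flattenable --- trivially for edges and triangles, and by Witsenhausen's realisation for $K_4$ --- so an inductive gluing argument suffices: given two already-flattened pieces sharing a clique with matching edge lengths, translate, permute coordinates and reflect one realisation using the symmetries of $\ell_\infty^2$ so that the shared vertices coincide, and then verify the union is still a valid realisation.

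The main obstacle will be carrying out the gluing step along a shared edge. In $\ell_2^2$ any two realisations of a single edge with equal length are related by a Euclidean isometry, but the isometry group of $\ell_\infty^2$ is only the finite dihedral group $D_4$ of axis-permutations and sign flips together with translations, so two generic placements of an edge in $\ell_\infty^2$ need not be congruent via the ambient isometries alone. The workaround is to observe that within the flattening framework one has complete freedom to redraw either piece on its own coordinate grid before gluing; one must then prove a matching lemma guaranteeing that, given matched edge lengths on the shared clique, at least one member of the $D_4$-orbit of one realisation can be reached from the other. Establishing this matching lemma, together with verifying the structural minor-avoidance decomposition, constitutes the core technical content and is the step I expect to be hardest.
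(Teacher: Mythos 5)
The paper does not prove \Cref{t:infflat}; it is imported verbatim from \cite{fhjv2017}, so there is no in-paper argument to compare against and your proposal has to stand on its own. As written it has a gap already in the reduction step. The linear map $(x,y)\mapsto(x+y,x-y)$ does identify $\ell_1^2$ with $\ell_\infty^2$ isometrically, but it says nothing about the \emph{source} spaces $\ell_1$ and $\ell_\infty$, which are not isometric: by the Fr\'{e}chet embedding (\Cref{t:embed}) one has $f_{G,\ell_1}(\ell_1^V)\subseteq f_{G,\ell_\infty}(\ell_\infty^V)$, and this inclusion is strict in general since not every finite metric space embeds in $\ell_1$. Consequently $(\ell_\infty^2,\ell_\infty)$-flattenability implies $(\ell_1^2,\ell_1)$-flattenability, but the necessity direction for $p=1$ --- that the obstructing edge-length vectors for $W_4$ and $K_4+_e K_4$ can be produced by realisations in $\ell_1$ rather than merely in $\ell_\infty$ --- is an additional theorem, not a formal consequence of the planar rotation. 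Your necessity sketch for $p=\infty$ is the right kind of argument (compare the explicit lengths of \Cref{fig:w5joinedgelengths} reused in \Cref{t:l2tolinf}), but it supplies no actual certificates.

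The sufficiency plan fails at both of its steps. The structural claim is too coarse: take $K_4$ on $\{a,b,x,y\}$, a triangle on $\{a,b,c\}$ glued along $ab$, and a second $K_4$ on $\{b,c,u,v\}$ glued along $bc$. This is assembled from edges, triangles and $K_4$'s by clique-sums on at most two vertices with no two $K_4$'s sharing an edge, yet contracting $ac$ yields two $K_4$'s sharing the edge $wb$ (where $w$ is the contracted vertex), and deleting $wb$ gives a $K_4+_e K_4$ minor. So the class your decomposition describes strictly contains non-flattenable graphs, and no gluing argument over that class can succeed. Correspondingly, the proposed matching lemma is false: the origin-fixing isometries of $\ell_\infty^2$ are exactly the eight symmetries of the square, and the segments with difference vectors $(1,0)$ and $(1,\tfrac{1}{2})$ have equal $\ell_\infty$-length but lie in different orbits, so two realised copies of a shared edge of equal length generally cannot be matched by ambient isometries. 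This is not a technicality to be engineered around --- it is precisely the phenomenon that makes $K_4+_e K_4$ a forbidden minor --- and repairing it requires both a genuinely finer structure theorem and a realisation argument exploiting the freedom in \emph{where} each piece places the shared edge; that is the substantive content of \cite{fhjv2017} and is absent from the proposal.
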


\begin{figure}[htp]
\begin{center}
    \begin{tikzpicture}[scale=1]
        \node[vertex] (a) at (2,0) {};
    	\node[vertex] (b) at (1,-1) {};
        \node[vertex] (c) at (1,1) {};
    	\node[vertex] (d) at (0,0) {};
    	
        \node[vertex] (e) at (1,0) {};
    	
    	\draw[edge] (e)edge(a);
    	\draw[edge] (e)edge(b);
    	\draw[edge] (e)edge(c);
    	\draw[edge] (e)edge(d);
    	\draw[edge] (a)edge(b);
    	\draw[edge] (a)edge(c);
    	\draw[edge] (b)edge(d);
    	\draw[edge] (c)edge(d);
    	\filldraw[black] (0,-1.5) circle {};
    \end{tikzpicture}\qquad \qquad
    \begin{tikzpicture}[scale=0.75]
        \node[vertex] (a) at (-2,-1) {};
    	\node[vertex] (b) at (-2,1) {};
    	
        \node[vertex] (c) at (0,1) {};
    	\node[vertex] (d) at (0,-1) {};
    	
        \node[vertex] (e) at (2,-1) {};
    	\node[vertex] (f) at (2,1) {};
    	
    	\node[blankvertex] (blank) at (0,-2) {};
    	
    	\draw[edge] (a)edge(b);
    	\draw[edge] (a)edge(c);
    	\draw[edge] (a)edge(d);
    	\draw[edge] (b)edge(c);
    	\draw[edge] (b)edge(d);
    	%\draw[edge] (c)edge(d);
        
    	\draw[edge] (c)edge(e);
    	\draw[edge] (c)edge(f);
    	\draw[edge] (d)edge(e);
    	\draw[edge] (d)edge(f);
    	\draw[edge] (e)edge(f);
    \end{tikzpicture}
\end{center}
\vspace{-0.5cm}
\caption{The wheel graph $W_4$ (left) and the graph $K_4 +_e K_4$ formed by joining two copies of $K_4$ at an edge $e$ and then removing said edge (right).}
\label{fig:w5join}
\end{figure}
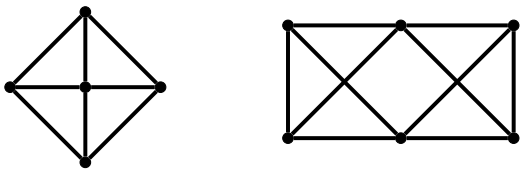

Very little is known regarding the forbidden minors for $(\ell_1^d,\ell_1)$-flattenability and $(\ell_\infty^d,\ell_\infty)$-flattenability when $d \geq 3$.
Some families of forbidden minors for $(\ell_\infty^d,\ell_\infty)$-flattenability can be found in \cite{fhjm}.

\section{Basic results for flattenability between general normed spaces}\label{sec:general}

In this section we cover general properties of flattenability between general normed spaces.
Throughout the paper we shall make the (rather trivial) assumption that every normed space has dimension higher than zero.
We now begin with some easy observations.

\begin{proposition}\label{p:xyzembed}
    If a graph $G$ is $(X,Y)$-flattenable and $(Y,Z)$-flattenable,
    then $G$ is $(X,Z)$-flattenable.
\end{proposition}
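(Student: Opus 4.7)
The plan is to derive the result directly from the set-theoretic reformulation of flattenability given in the introduction, namely that $G$ is $(X,Y)$-flattenable if and only if $f_{G,X}(X^V) \supseteq f_{G,Y}(Y^V)$. Under this reformulation, the proposition becomes a transitivity statement about inclusions of image sets in $\mathbb{R}^E$, so no geometric or norm-specific argument is required.

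First, I would unpack the hypothesis: $(X,Y)$-flattenability gives $f_{G,X}(X^V) \supseteq f_{G,Y}(Y^V)$, and $(Y,Z)$-flattenability gives $f_{G,Y}(Y^V) \supseteq f_{G,Z}(Z^V)$. Then I would simply chain these two inclusions to obtain $f_{G,X}(X^V) \supseteq f_{G,Z}(Z^V)$, and invoke the reformulation a third time in reverse to conclude that $G$ is $(X,Z)$-flattenable.

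Equivalently, one could phrase this at the level of realisations: given any realisation $p_Z : V \to Z$, the vector of edge lengths $f_{G,Z}(p_Z)$ is realised by some $p_Y : V \to Y$ (by $(Y,Z)$-flattenability), whose edge length vector $f_{G,Y}(p_Y)$ is in turn realised by some $p_X : V \to X$ (by $(X,Y)$-flattenability). Then $f_{G,X}(p_X) = f_{G,Z}(p_Z)$, as required.

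There is no real obstacle here; the statement is essentially transitivity of inclusion, and the only thing to check is that the definition is phrased purely as a containment of edge-length sets, independent of the spaces involved. I would keep the proof to two or three lines, using whichever of the two equivalent formulations (set inclusions or realisation-chasing) fits the paper's style best.
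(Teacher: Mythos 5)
Your proposal is correct and matches the paper's proof exactly: the paper also just chains the two inclusions $f_{G,X}(X^V) \supseteq f_{G,Y}(Y^V) \supseteq f_{G,Z}(Z^V)$ in a single line. The realisation-chasing variant you mention is an equivalent unpacking of the same argument, so there is nothing further to add.
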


\begin{proof}
    This is immediate as $f_{G,X}(X^V) \supseteq f_{G,Y}(Y^V) \supseteq f_{G,Z}(Z^V)$.
\end{proof}

\begin{proposition}\label{p:isom}
	Let $X,Y$ be isometrically isomorphic normed spaces.
	Then the following holds for every normed space $Z$.
	\begin{enumerate}
		\item\label{p:isom1} Every $(X,Z)$-flattenable graph is $(Y,Z)$-flattenable.
		\item\label{p:isom2} Every $(Z,X)$-flattenable graph is $(Z,Y)$-flattenable.
	\end{enumerate}
\end{proposition}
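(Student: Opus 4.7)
The plan is to exploit the fact that an isometric isomorphism preserves distances, so it induces an equality (not just an inclusion) between the two images of the measurement map. Concretely, let $\phi : X \to Y$ be an isometric isomorphism. For any realisation $p : V \to X$, composing with $\phi$ produces a realisation $\phi \circ p : V \to Y$, and since $\|\phi(x) - \phi(x')\|_Y = \|x - x'\|_X$ for every $x, x' \in X$, the two realisations induce the identical vector of edge lengths. Applying the same argument to $\phi^{-1}$ in the other direction, I would conclude
\[
    f_{G,X}(X^V) = f_{G,Y}(Y^V).
\]

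With this equality in hand, both parts are immediate. For part (i), if $G$ is $(X,Z)$-flattenable then $f_{G,X}(X^V) \supseteq f_{G,Z}(Z^V)$, and substituting the equality gives $f_{G,Y}(Y^V) \supseteq f_{G,Z}(Z^V)$, which is exactly $(Y,Z)$-flattenability. For part (ii), if $G$ is $(Z,X)$-flattenable then $f_{G,Z}(Z^V) \supseteq f_{G,X}(X^V) = f_{G,Y}(Y^V)$, which is exactly $(Z,Y)$-flattenability.

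There is essentially no obstacle here; the proof is a one-line observation once one notices that isometric isomorphisms preserve the measurement map set-wise. The only thing that might warrant care is the explicit verification that composing $p$ with $\phi$ yields the same edge-length vector, but this is immediate from the definition of an isometric isomorphism. I would keep the proof to two or three lines, and state the intermediate equality $f_{G,X}(X^V) = f_{G,Y}(Y^V)$ explicitly so that both parts follow symmetrically.
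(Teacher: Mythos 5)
Your proof is correct and takes essentially the same approach as the paper: both hinge on composing realisations with the isometric isomorphism (and its inverse) to preserve the edge-length vector, the paper doing this element-wise while you package it as the set equality $f_{G,X}(X^V) = f_{G,Y}(Y^V)$. No gaps.
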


\begin{proof}
	Let $G=(V,E)$ be any graph and let $T:X \rightarrow Y$ be an isometric linear isomorphism between $X$ and $Y$.
	
	\ref{p:isom1}:
	Suppose $G$ is $(X,Z)$-flattenable graph and choose any $q \in Z^V$.
	Then there exists $p \in X^V$ so that $f_{G,X}(p)=f_{G,Z}(q)$.
	If we define $p' \in Y^V$ with $p'_v:=T(p_v)$ for each $v \in V$,
	then $f_{G,Y}(p') = f_{G,X}(p)=f_{G,Z}(q)$ as required.
	
	\ref{p:isom2}:
	Suppose $G$ is $(Z,X)$-flattenable graph and choose any $q \in Y^V$.
	If we define $q' \in X^V$ with $q'_v := T^{-1}(q_v)$ for each $v \in V$,
	then $f_{G,X}(q') = f_{G,Y}(q)$.
	As $G$ is $(Z,X)$-flattenable then there exists $p \in Z^V$ with $f_{G,Z}(p) = f_{G,X}(q') = f_{G,Y}(q)$ as required.
\end{proof}

We can also immediately characterise the $(X,Y)$-flattenable graphs when either $\dim X = 1$ or $\dim Y = 1$.

\begin{proposition}\label{p:isomembed}
    If $Y$ can be isometrically embedded into $X$, then every graph is $(X,Y)$-flattenable.
    In particular, every graph is $(X,Y)$-flattenable when $\dim Y = 1$.
\end{proposition}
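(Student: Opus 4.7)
The plan is to directly transport any realisation in $Y$ to a realisation in $X$ via the given isometric embedding. Let $\iota : Y \rightarrow X$ be an isometric embedding, so $\|\iota(y_1) - \iota(y_2)\|_X = \|y_1 - y_2\|_Y$ for all $y_1, y_2 \in Y$ (note that I do not need $\iota$ to be linear, only distance-preserving). Given a graph $G = (V,E)$ and a realisation $q \in Y^V$, I would define $p \in X^V$ by $p_v := \iota(q_v)$ for each $v \in V$, and check directly from the definition of $f_{G,X}$ that $f_{G,X}(p) = f_{G,Y}(q)$. Since $q$ was arbitrary, this yields $f_{G,X}(X^V) \supseteq f_{G,Y}(Y^V)$, which is exactly the definition of $(X,Y)$-flattenability.

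For the second statement, the task reduces to exhibiting an isometric embedding of a one-dimensional normed space $Y$ into $X$ and then applying the first part. Fix any nonzero $y_0 \in Y$, so that every element of $Y$ can be written uniquely as $t y_0$ for some $t \in \mathbb{R}$; since by the standing assumption $\dim X \geq 1$, fix any nonzero $x_0 \in X$. The map $\iota : Y \rightarrow X$ sending $t y_0$ to $t\, \|y_0\|_Y \, x_0 / \|x_0\|_X$ is then a linear isometric embedding, since $\|\iota(t y_0)\|_X = |t|\, \|y_0\|_Y = \|t y_0\|_Y$.

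There is no serious obstacle here: the proposition is essentially an unpacking of the definitions of isometric embedding and of the measurement map $f_{G,X}$. The only minor point worth flagging is that an isometric embedding of normed spaces need not be affine or linear, but this is irrelevant because the measurement map depends only on pairwise distances between the images of vertices.
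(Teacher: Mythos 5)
Your proof is correct and follows essentially the same route as the paper: both transport an arbitrary realisation in $Y$ to one in $X$ via the embedding and observe that the induced edge lengths are preserved, and both reduce the one-dimensional case to the first part by noting that any one-dimensional normed space embeds isometrically (indeed linearly, as in your explicit construction) into $X$. Your remark that the embedding need not be linear is a harmless slight generalisation of the paper's argument, which takes $T$ to be linear.
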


\begin{proof}
    Let $G=(V,E)$ be any graph and let $T:Y \rightarrow X$ be an isometric linear map.
    As $f_{G,Y}(Y^V) = f_{G,X}((TY)^V) \subseteq f_{G,X}(X^V)$,
    $G$ is $(X,Y)$-flattenable.
    The final part of the result now follows from the observation that all 1-dimensional normed spaces are isometrically isomorphic,
    and hence any 1-dimensional normed space can be isometrically embedded into any higher dimensional space.
\end{proof}

\begin{proposition}\label{p:1dflat}
	Let $X,Y$ be normed spaces where $\dim X =1$.
	\begin{enumerate}
	    \item \label{p:1dflat1} If $\dim Y = 1$, then every graph is $(X,Y)$-flattenable.
	    \item \label{p:1dflat2} If $\dim Y \geq 2$, then a graph is $(X,Y)$-flattenable if and only if it is a forest.
	\end{enumerate}
\end{proposition}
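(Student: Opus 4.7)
The plan is to handle the two parts of the proposition separately.

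Part \ref{p:1dflat1} is immediate from \Cref{p:isomembed}: any two $1$-dimensional real normed spaces are isometrically isomorphic, so $Y$ isometrically embeds into $X$, and hence every graph is $(X,Y)$-flattenable.

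For part \ref{p:1dflat2}, I would prove the two implications separately. To see that every forest is $(X,Y)$-flattenable, take a forest $G$ and any realisation $q \in Y^V$. Fix a unit vector $e \in X$, root each connected component of $G$, send the roots to $0 \in X$, and recursively define $p_v := p_w + \|q_v - q_w\|_Y\, e$ for each non-root $v$ with parent $w$. The absence of cycles makes this construction consistent, and by design $\|p_v - p_w\|_X = \|q_v - q_w\|_Y$ for every edge $vw$, so $f_{G,X}(p) = f_{G,Y}(q)$.

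For the converse, suppose $G$ contains a cycle. Any cycle in $G$ contracts to a triangle, so $K_3$ is a minor of $G$; since $(X,Y)$-flattenability is minor-closed, it is enough to show that $K_3$ itself is not $(X,Y)$-flattenable. On the line $X \cong \mathbb{R}$, any three points $x_1, x_2, x_3$ satisfy one of the triangle equalities $|x_i - x_k| = |x_i - x_j| + |x_j - x_k|$ (taking $x_j$ to be the middle element), so the image $f_{K_3,X}(X^3)$ excludes every length vector whose entries satisfy all three strict triangle inequalities. Thus it suffices to exhibit a realisation of $K_3$ in $Y$ whose three edge lengths satisfy strict triangle inequalities.

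I would construct three pairwise equidistant unit vectors in $Y$ as follows. Pick any unit vector $u \in Y$ and a $2$-dimensional subspace $Y_0 \leq Y$ containing $u$ (which exists since $\dim Y \geq 2$). The unit sphere of $Y_0$ is the boundary of a symmetric compact convex body with nonempty interior in a real $2$-dimensional vector space, and is therefore a Jordan curve, in particular path-connected. The continuous function $v \mapsto \|u - v\|_Y$ on this sphere takes the value $0$ at $v = u$ and $2$ at $v = -u$, so by the intermediate value theorem there exists a unit vector $v$ with $\|u - v\|_Y = 1$. The realisation $(p_1, p_2, p_3) = (0, u, v)$ of $K_3$ then has all edge lengths equal to $1$ and all triangle inequalities strict, giving the required obstruction. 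The main obstacle is guaranteeing an equilateral realisation of $K_3$ in an arbitrary normed space of dimension at least $2$, and the path-connectedness plus intermediate value theorem argument is the key step that overcomes it.
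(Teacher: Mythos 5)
Your proposal is correct and follows essentially the same route as the paper: part (i) via \Cref{p:isomembed}, and part (ii) by realising forests explicitly on the line, reducing the converse to $K_3$ via minor-closedness, and exhibiting an equilateral triple in a $2$-dimensional subspace of $Y$ that cannot be realised in $\mathbb{R}$. The only difference is that you spell out (via the intermediate value theorem on the unit sphere of a planar subspace) the existence of the equilateral set, which the paper asserts as ``easy to see''; this added detail is sound.
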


\begin{proof}
	\ref{p:1dflat1}:
	This follows from \Cref{p:isomembed}.
	
	\ref{p:1dflat2}:
	Choose any graph $G=(V,E)$.
	It is clear that if $G$ is a forest with specified edge lengths $(d_{vw})_{vw \in E}$,
	then there exists $p \in X^V$ where $\|p_v-p_w\|_X =d_{vw}$ for all $vw \in E$.
	Suppose $G$ is not a forest.
	Since $(X,Y)$-flattenability is a minor-closed property,
	it suffices to assume that $G$ is a complete graph with three vertices, i.e., $G \cong K_3$.
	It is easy to see that for any 2-dimensional subspace $Z \subset Y$,
	we can choose three points $x,y,z \in Z$ where $\|x-z\|_Y=\|y-z\|_Y=\|x-y\|_Y=1$.
	We note $f_{G,Y}(x,y,z)=(1,1,1)$,
	however there exists no $p \in X^V$ with $f_{G,X}(p)=(1,1,1)$.
\end{proof}

\section{Independence and flattenability}\label{sec:ind}

In this section we explore connections between $(X,Y)$-flattenability and independence, a matroidal property found in rigidity theory. There are two main results which deal separately with the case where $Y$ is finite dimensional (\Cref{t:ind}) and where $Y$ is infinite dimensional (\Cref{c:ind}).

A realisation $p$ of a graph $G=(V,E)$ in a finite-dimensional normed space $X$ is \emph{independent} if the map $f_{G,X}$ is (Fr\'{e}chet) differentiable at $p$ and $\rank df_{G,X}(p) = |E|$. 
If an independent realisation of a graph $G$ exists in a normed space $X$ then we say that $G$ is \emph{independent in $X$}.
It was proven in \cite{SithWill} that any $(\ell_p^d,\ell_p)$-flattenable graph is also independent in $\ell_p^d$.
In this section we similarly prove that this extends to most normed spaces.

We now require the following concept.
Let $X,Y$ be finite-dimensional normed spaces and let $f :X \rightarrow Y$ be a locally Lipschitz map,
i.e., a map such that for every point $x \in X$,
there exists an open neighbourhood $U_x \subset X$ of $x$ and a constant $k_x >0$ such that $\|f(x) - f(x')\|_Y \leq k_x \|x-x'\|_X$ for all $x' \in U_x$.
Let $D(f)$ denote the set of differentiable points of a locally Lipschitz map $f:X\to Y$.
By Rademacher's theorem,
 $D(f)$ is a conull set 
and hence  is dense in $X$.
Let $x\in X$ and denote by $D(f;x)$ the set of sequences $(x_n)$ in $D(f)$ which converge to $x$. 

We define the set
\begin{equation*}
    \partial f(x) := \conv \left\{ \lim_{n \rightarrow \infty} df(x_n) : (x_n)\in D(f;x) \text{ and } (df(x_n)) \text{ converges }  \right\},
\end{equation*}
where $\conv S$ denotes the convex hull of a set $S$.
Any linear map in $\partial f(x)$ is called a \emph{generalised derivative} of $f$ at $x$.
Each set $\partial f(x)$ is non-empty, convex and compact \cite[Proposition 2.6.2(a)]{clarke}.
Sets of generalised derivatives also obey the following continuity rule.

\begin{lemma}[{\cite[Proposition 2.6.2(c)]{clarke}}]\label{l:clarkecont}
    Let $X,Y$ be finite-dimensional normed spaces, let $f:X \rightarrow Y$ be a locally Lipschitz map, and let $x_0 \in X$.
    Then for every $\varepsilon >0$,
    there exists $\delta >0$ such that for each $x \in X$ with $\|x-x_0\|_X < \delta$,
    we have
    \begin{equation*}
        \partial f(x) \subset \partial f(x_0) + B_\varepsilon,
    \end{equation*}
    where $B_\varepsilon$ is the set of all linear maps $T:X \rightarrow Y$ with $\|T\|_{\text{op}} < \varepsilon$ (with $\|\cdot\|_{\text{op}}$ being the operator norm for linear maps between $X$ and $Y$).
\end{lemma}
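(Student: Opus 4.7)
The statement is cited from Clarke's book as the upper semicontinuity of the generalised Jacobian, so the paper simply invokes it. If I had to reproduce the proof, the plan would run as follows.

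First I would exploit the local Lipschitz hypothesis to obtain a uniform bound. Fix a neighbourhood $U$ of $x_0$ on which $f$ is $L$-Lipschitz. At every $y \in U \cap D(f)$ one has $\|df(y)\|_{\text{op}} \leq L$, so any $T \in \partial f(x)$ with $x \in U$ is a convex combination of maps of operator norm at most $L$ and hence itself satisfies $\|T\|_{\text{op}} \leq L$. In particular, the union $\bigcup_{x \in U} \partial f(x)$ is relatively compact in the finite-dimensional space $\mathcal{L}(X,Y)$ of linear maps, and each $\partial f(x)$ is closed and convex.

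Next I would argue by contradiction. Suppose the conclusion fails for some $\varepsilon_0 > 0$: then there exist $x_k \to x_0$ and $T_k \in \partial f(x_k)$ with $T_k \notin \partial f(x_0) + B_{\varepsilon_0}$ for all $k$. By the uniform bound from the previous step, pass to a subsequence with $T_k \to T$ for some linear map $T$. It then suffices to show $T \in \partial f(x_0)$, since $\partial f(x_0)$ is compact and so the complement of $\partial f(x_0) + B_{\varepsilon_0}$ is closed, forcing $T \notin \partial f(x_0)$, a contradiction.

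The only substantive step is upgrading the statement ``$T_k \in \partial f(x_k)$'' — a convex combination of limits of derivatives — to an actual converging sequence of honest derivatives on $D(f)$. I would handle this via Carathéodory's theorem applied inside $\mathcal{L}(X,Y)$, writing $T_k = \sum_{i=0}^{m} \lambda_{k,i} S_{k,i}$ with $m = \dim \mathcal{L}(X,Y)$, $\lambda_{k,i} \geq 0$, $\sum_i \lambda_{k,i} = 1$, and each $S_{k,i}$ the limit of $df$ along some sequence in $D(f)$ tending to $x_k$. For each $k$ and $i$, choose $z_{k,i} \in D(f)$ with $\|z_{k,i} - x_k\|_X < 1/k$ and $\|df(z_{k,i}) - S_{k,i}\|_{\text{op}} < 1/k$; then $z_{k,i} \to x_0$ as $k \to \infty$. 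After passing to a further subsequence so that $\lambda_{k,i} \to \lambda_i$ and $df(z_{k,i}) \to R_i$ for every $i$, each $R_i$ lies in $\partial f(x_0)$ by the very definition of that set, and convexity of $\partial f(x_0)$ yields $T = \lim_k T_k = \sum_{i=0}^{m} \lambda_i R_i \in \partial f(x_0)$, giving the desired contradiction. The bookkeeping in this diagonal extraction is really the only technical point; everything else reduces to Rademacher's theorem together with compactness in finite dimensions.
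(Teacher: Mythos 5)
The paper does not prove this lemma at all---it is imported verbatim from Clarke \cite[Proposition 2.6.2(c)]{clarke}---so there is no in-paper argument to compare against, and your reconstruction is correct and follows the standard route (uniform Lipschitz bound on $\bigcup_{x\in U}\partial f(x)$, Carath\'{e}odory decomposition into at most $\dim \mathcal{L}(X,Y)+1$ honest derivative limits, diagonal extraction), which is essentially Clarke's own proof. The only cosmetic slip is the justification that the complement of $\partial f(x_0)+B_{\varepsilon_0}$ is closed: this holds because $B_{\varepsilon_0}$ is open (so the sum is a union of open translates), not because $\partial f(x_0)$ is compact.
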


Generalised derivatives allow for a non-smooth variant of the constant rank theorem.

\begin{theorem}[{\cite[Theorem 3.1]{btv}}]\label{t:rankthm}
    Let $X,Y$ be finite-dimensional normed spaces, let $f:X \rightarrow Y$ be a locally Lipschitz map, and let $x \in X$.
    Suppose that there exists a neighbourhood $O \subset X$ of $x$ such that every generalised derivative of every point $x' \in O$ has rank $k$.
    Then there exists open sets $U \subset \mathbb{R}^{\dim X}$, $V \subset \mathbb{R}^{\dim Y}$, $U' \subset O$, $V' \subset Y$ with $x \in U'$, $f(x) \in V'$,
    and there exist bilipschitz maps $\phi:U \rightarrow U'$ and $\psi: V \rightarrow V'$,
    so that for every $(a_1,\ldots,a_{\dim X})\in U$ we have 
    \begin{equation*}
        (\psi^{-1} \circ f \circ \phi) (a_1,\ldots,a_{\dim X}) = (a_1,\ldots,a_k, 0, \ldots,0).
    \end{equation*}
\end{theorem}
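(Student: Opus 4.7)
The plan is to adapt the classical constant-rank theorem to the Lipschitz setting, using the Clarke calculus as a substitute for differentiation. First, I would fix some $A \in \partial f(x)$ of rank $k$ and apply linear changes of coordinates on $X$ and $Y$ (using that rank is invariant under isomorphism) so that $A$ has an invertible $k\times k$ block in the upper-left corner; write $n = \dim X$ and $m = \dim Y$. By shrinking $O$ using \Cref{l:clarkecont}, I can arrange that every generalised derivative at every point of $O$ also has an invertible upper-left $k \times k$ block.

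Next, I would construct $\phi$. Identifying $X = \mathbb{R}^k \times \mathbb{R}^{n-k}$, write $f = (f_1, f_2)$ with $f_1$ valued in $\mathbb{R}^k$ and $f_2$ in $\mathbb{R}^{m-k}$, and define
\begin{equation*}
    F(a_1, a_2) := (f_1(a_1, a_2), a_2).
\end{equation*}
Each generalised Jacobian of $F$ on $O$ is block lower/upper-triangular with an invertible $k\times k$ block and an identity $(n-k)\times(n-k)$ block on the diagonal, hence invertible; Clarke's nonsmooth inverse function theorem (e.g.~\cite[Theorem 7.1.1]{clarke}) then yields that $F$ restricts to a bilipschitz homeomorphism from a neighbourhood of $x$ onto a neighbourhood of $F(x)$. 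I take $\phi$ to be its inverse, defined on an appropriately chosen open set $U \subset \mathbb{R}^n$. Then $f \circ \phi$ has the form $(b_1, b_2) \mapsto (b_1, g(b_1, b_2))$ for some locally Lipschitz $g$. The rank-$k$ hypothesis on $\partial f$, combined with the chain rule for generalised Jacobians, forces every generalised partial of $g$ in the $b_2$ direction to vanish; a nonsmooth mean value argument then yields $g(b_1, b_2) = h(b_1)$ for a locally Lipschitz $h$. Finally, $\psi(c_1, c_2) := (c_1, c_2 + h(c_1))$ is bilipschitz with inverse $(c_1, c_2) \mapsto (c_1, c_2 - h(c_1))$, and a direct computation gives $(\psi^{-1} \circ f \circ \phi)(a_1, \ldots, a_n) = (a_1, \ldots, a_k, 0, \ldots, 0)$.

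The main obstacle is the independence of $g$ from $b_2$. In the smooth case this is an immediate consequence of the chain rule, but Clarke's calculus supplies only set-valued inclusions for the generalised Jacobian of a composition, so a purely symbolic argument is not available. The cleanest route is probably to restrict $f \circ \phi$ to a line segment in the $b_2$-direction, apply Rademacher's theorem to obtain differentiability almost everywhere along the segment, use the rank-$k$ condition (transferred along $\phi$) to deduce that the classical partial $\partial g/\partial b_2$ vanishes almost everywhere there, and then integrate to recover constancy. Ensuring that all these local choices can be made uniformly on a common neighbourhood so that $U, V, U', V'$ are honest open sets with the claimed inclusions is a further technical point, but routine once the key step has been established.
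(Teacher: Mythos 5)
First, a point of order: the paper does not prove \Cref{t:rankthm} at all --- it is imported verbatim from the cited reference as a black box --- so there is no in-paper argument to compare yours against. Judged on its own terms, your proposal follows the natural Lipschitz adaptation of the classical constant-rank proof (straighten the first $k$ components of $f$ via Clarke's nonsmooth inverse function theorem, then show the remaining components factor through those $k$), and this is very plausibly the architecture of the cited proof as well.

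There is, however, a genuine gap at the very first reduction. You fix a single $A \in \partial f(x)$, change coordinates so that \emph{its} upper-left $k \times k$ block is invertible, and assert that after shrinking $O$ via \Cref{l:clarkecont} \emph{every} generalised derivative at every point of $O$ has an invertible upper-left block. But $\partial f(x)$ is a compact convex set that may contain many rank-$k$ maps with different kernels and different images: for instance the segment joining $e_1 e_1^{T}$ to $e_1 e_2^{T}$ consists entirely of rank-one matrices, only some of which have nonzero $(1,1)$ entry (concretely, $f(x,y)=y+|x|$ has $\partial f(0,0)=\{(t,1): t\in[-1,1]\}$, all of rank one, with the first entry vanishing for $t=0$). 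A coordinate change adapted to the single element $A$ does nothing for the other elements, and \Cref{l:clarkecont} only propagates $\partial f(x)$ outward to nearby points --- it does not shrink $\partial f(x)$ itself. This matters because Clarke's inverse function theorem requires \emph{every} element of $\partial F(x)$ to be nonsingular, and $\partial F(x)$ is governed by the first $k$ rows of \emph{all} of $\partial f(x)$, not just of the element you normalised. What is missing is a linear-algebra lemma asserting that a compact convex family of matrices of constant rank $k$ admits a single adapted pair of subspaces (a $k$-dimensional subspace of the domain meeting every kernel trivially, and a complement of dimension $\dim Y - k$ in the codomain meeting every image trivially); establishing this uses the convexity-plus-constant-rank hypothesis in an essential way and is one of the real ingredients of the theorem. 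A secondary, more minor, looseness: Rademacher gives differentiability of $f\circ\phi$ almost everywhere in $\mathbb{R}^{\dim X}$, which says nothing about a \emph{fixed} segment in the $b_2$-direction (a null set); you need a Fubini argument over a family of parallel segments followed by continuity. That part is routine to repair; the transversality lemma is not.
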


\begin{lemma}\label{l:openint}
    If a graph $G$ is independent in a finite-dimensional normed space $X$, then the set $f_{G,X}(X^V)$ has a non-empty interior.
\end{lemma}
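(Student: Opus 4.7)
The plan is to apply the non-smooth constant rank theorem (\Cref{t:rankthm}) to the measurement map $f_{G,X}$ in a neighbourhood of an independent realisation $p$. The key input will be a singleton property of the Clarke generalised Jacobian at $p$, which comes from convexity of the coordinates of $f_{G,X}$.

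First, I would observe that each coordinate function $(f_{G,X})_{vw} : X^V \to \mathbb{R}$, $q \mapsto \|q_v - q_w\|_X$, is convex, as it is the composition of a linear map with the norm $\|\cdot\|_X$. For convex real-valued functions the Clarke generalised gradient coincides with the convex analysis subdifferential, and Fréchet differentiability at a point therefore forces the generalised gradient there to be the singleton consisting of the derivative. Since $f_{G,X}$ is differentiable at the independent point $p$, each $(f_{G,X})_{vw}$ is differentiable at $p$, and hence $\partial (f_{G,X})_{vw}(p) = \{d(f_{G,X})_{vw}(p)\}$. Using the fact that the Clarke generalised Jacobian of a vector-valued map is contained in the product of the coordinate generalised gradients (each limit $\lim df_{G,X}(x_n)$ has its $vw$-row lying in $\partial (f_{G,X})_{vw}(p)$), I conclude $\partial f_{G,X}(p) = \{df_{G,X}(p)\}$.

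Next, I would promote ``maximal rank at $p$'' to ``maximal rank throughout a neighbourhood of $p$''. The set of linear maps $X^V \to \mathbb{R}^E$ of rank $|E|$ is open in the operator norm topology, since it is the non-vanishing locus of some $|E|\times|E|$ minor, so there is $\varepsilon > 0$ with every element of $df_{G,X}(p) + B_\varepsilon$ of rank $|E|$. \Cref{l:clarkecont} then supplies an open neighbourhood $O$ of $p$ with $\partial f_{G,X}(q) \subset \{df_{G,X}(p)\} + B_\varepsilon$ for every $q \in O$, and therefore every generalised derivative of $f_{G,X}$ at every point of $O$ has rank exactly $|E|$.

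Finally, I would apply \Cref{t:rankthm} with $k = |E|$: this yields open sets $U \subset \mathbb{R}^{|V|\dim X}$, $V \subset \mathbb{R}^{|E|}$, $U' \subset O$ and $V' \subset \mathbb{R}^E$ with $p \in U'$, together with bilipschitz homeomorphisms $\phi : U \to U'$ and $\psi : V \to V'$, such that $\psi^{-1} \circ f_{G,X} \circ \phi$ is the projection onto the first $|E|$ coordinates. Since this projection is open and $\psi$ is a homeomorphism, $f_{G,X}(U')$ contains an open subset of $\mathbb{R}^E$, which lies inside $f_{G,X}(X^V)$ and proves the result. The main obstacle is the singleton identity in the first step: without it, \Cref{l:clarkecont} would only give closeness to the possibly larger set $\partial f_{G,X}(p)$, which could a priori contain generalised derivatives of rank strictly less than $|E|$ and so prevent the uniform-rank hypothesis of \Cref{t:rankthm} from being satisfied. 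Convexity of the coordinate functions is exactly what rules this out.
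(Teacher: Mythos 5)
Your proposal is correct and follows essentially the same route as the paper: both use convexity of the coordinate functions of $f_{G,X}$ to show $\partial f_{G,X}(p)$ is the singleton $\{df_{G,X}(p)\}$, then combine \Cref{l:clarkecont} with openness of the set of rank-$|E|$ linear maps to get constant rank on a neighbourhood, and finish with \Cref{t:rankthm} and openness of projections. The only cosmetic difference is how the singleton identity is justified (you via the coincidence of the Clarke gradient with the convex subdifferential at a point of differentiability, the paper via continuity of the derivative of a convex function on its set of differentiable points), and both arguments are valid.
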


\begin{proof}
    Choose a realisation $p$ of $G$ in $X$ such that $f_{G,X}$ is differentiable at the point $p$ and $\rank d f_{G,X}(p) =|E|$.
    The projection of the map $f_{G,X}$ to any coordinate in its codomain describes a convex map from $X^V$ to $\mathbb{R}$.
    Since the derivative of a convex function is continuous over its set of differentiable points (see, for example, \cite[Theorem 25.5]{rockafellar}),
    it follows that $\partial f_{G,X}(p) = \{df_{G,X}(p)\}$.
    By \Cref{l:clarkecont},
    there exists an open neighbourhood $O$ of $p$ such that for each $q \in O$,
    every generalised derivative of $f_{G,X}$ at $q$ has rank $|E|$.
    Since projections are open maps, it follows now from \Cref{t:rankthm} that the set $f_{G,X}(O)$ has a non-empty interior.
\end{proof}

Before moving to our first main result, we describe the following concepts from differential geometry.
Let $X$ and $Y$ be finite-dimensional normed spaces, $U \subset X$ be an open set, and $f:U \subset X \rightarrow Y$ a $C^\infty$-differentiable map (i.e., for each positive integer $k$, the $k$-th Fr\'{e}chet derivative of $f$ exists and is continuous).
We say that a point $y \in Y$ is a \emph{critical value} if there exists a point $x \in X$ where $f(x)=y$ and $\rank df(x) < \dim Y$. 
Any point in $Y$ that is not a critical value is said to be a \emph{regular value}.

\begin{theorem}\label{t:ind}
    Let $X$ be a finite-dimensional normed space where the norm is $C^\infty$-differentiable on an open dense subset of $X$.
    If a graph $G$ is independent in a finite-dimensional normed space $Y$ and $(X,Y)$-flattenable,
    then $G$ is independent in $X$.
\end{theorem}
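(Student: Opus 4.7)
The plan is to argue by contradiction.  By \Cref{l:openint}, the independence of $G$ in $Y$ gives that $f_{G,Y}(Y^V)$ has non-empty interior in $\mathbb{R}^E$, and by the $(X,Y)$-flattenability hypothesis this interior lies in $f_{G,X}(X^V)$; in particular $f_{G,X}(X^V)$ has non-empty interior.  I will show that if $G$ is not independent in $X$ then $f_{G,X}(X^V)$ has empty interior, giving the desired contradiction.

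Let $W \subseteq X$ denote the open dense subset on which $\|\cdot\|_X$ is $C^\infty$, and set
\begin{equation*}
W^* := \{\, p \in X^V : p_v - p_w \in W \text{ for every edge } vw \in E \,\}.
\end{equation*}
Each edge-difference map $L_{vw}\colon X^V \to X$, $p \mapsto p_v - p_w$, is a continuous surjective linear map, hence an open map, so each $L_{vw}^{-1}(W)$ is open and dense in $X^V$; as a finite intersection of open dense sets, $W^*$ is itself open and dense.  Now $f_{G,X}$ is $C^\infty$ on $W^*$, so $W^* \subseteq D(f_{G,X})$; if $G$ is not independent in $X$, then $\rank df_{G,X}(p) \le |E|-1$ for every $p \in W^*$.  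Every point of $W^*$ is therefore a critical point of the $C^\infty$ restriction $f_{G,X}|_{W^*}\colon W^* \to \mathbb{R}^E$, and Sard's theorem yields that $f_{G,X}(W^*)$ has Lebesgue measure zero in $\mathbb{R}^E$.

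It remains to control $f_{G,X}(X^V \setminus W^*)$.  Here I would exploit convexity of the norm: the non-$C^\infty$ set $X \setminus W$ is a closed, empty-interior, positively $\mathbb{R}_{>0}$-invariant subset of $X$, and a standard convex-analytic argument (using that the set of non-differentiability of a finite-dimensional convex function has measure zero) shows it has Lebesgue measure zero in $X$.  Consequently $X^V \setminus W^* = \bigcup_{vw \in E} L_{vw}^{-1}(X \setminus W)$ has measure zero in $X^V$, since each $L_{vw}$ is a linear surjection and pre-images of null sets under such maps are null by Fubini.  Because $f_{G,X}$ is locally Lipschitz, it maps null sets to null sets, so $f_{G,X}(X^V \setminus W^*)$ is null.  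Hence $f_{G,X}(X^V) = f_{G,X}(W^*) \cup f_{G,X}(X^V \setminus W^*)$ is itself null and in particular has empty interior --- the desired contradiction.

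The main obstacle is the measure-zero claim for $X \setminus W$: the hypothesis only supplies it with empty interior, not a priori with Lebesgue measure zero.  If the convex-analytic step proves delicate, an alternative route is to use \Cref{l:clarkecont} and \Cref{t:rankthm} to stratify $X^V$ by the maximum rank $k$ of a generalised derivative of $f_{G,X}$; on each open piece where this rank is constant and equal to some $k \le |E|-1$ the theorem provides bilipschitz charts under which $f_{G,X}$ becomes a projection onto $\mathbb{R}^k$, so the image of that piece has empty interior.  A countable cover of $X^V$ by such pieces, combined with the Baire category theorem, then yields the same contradiction.
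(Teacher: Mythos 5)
Your opening moves match the paper's: both proofs use \Cref{l:openint} together with flattenability to conclude that $f_{G,X}(X^V)$ has non-empty interior, and both then invoke Sard's theorem on an open dense set where $f_{G,X}$ is $C^\infty$ (your $W^*$ is a correct explicit construction of the paper's $U$, and the Sard step on $W^*$ is fine). The difference is that you argue contrapositively, which forces you to show that $f_{G,X}(X^V\setminus W^*)$ is small, and both steps you use for that are genuinely false. First, $X\setminus W$ need not be Lebesgue-null: the hypothesis supplies only an open dense smooth locus, and the convex-analytic fact you cite concerns first-order differentiability, not $C^\infty$-smoothness. One can build a norm on $\mathbb{R}^2$ whose unit sphere is locally the graph of a $C^1$ convex function $g$ with $g''=1+\chi_C$ almost everywhere for a fat Cantor set $C$; such a norm is $C^\infty$ precisely off the cone over $C$, which is open and dense yet has complement of positive measure (Alexandrov gives a.e.\ second-order differentiability in a pointwise sense, but not $C^2$, let alone $C^\infty$, off a null set). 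Second, even where $X^V\setminus W^*$ \emph{is} null, the claim that a locally Lipschitz map sends null sets to null sets fails whenever the target dimension is smaller than the source dimension: the projection $\mathbb{R}^2\to\mathbb{R}$ maps the null set $\mathbb{R}\times\{0\}$ onto all of $\mathbb{R}$. Since $\dim X^V=|V|\dim X$ generally exceeds $|E|$, nothing prevents $f_{G,X}(X^V\setminus W^*)$ from having non-empty interior; for $X=\ell_1^2$ and $G=K_2$ it is already all of $[0,\infty)$.

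The paper sidesteps this set entirely by staying with the direct formulation: Sard makes the regular values of $f_{G,X}|_U$ conull, hence the non-empty interior of $f_{G,X}(X^V)$ contains one, and such a value is attained at a full-rank point of $U$. (That last inference quietly assumes the regular value is attained on $U$ rather than only on $X^V\setminus U$, so the difficulty you isolated is a genuine feature of the problem; but it cannot be repaired by a measure-zero claim about the non-smooth locus, because that claim is false under the stated hypotheses.) Your fallback via \Cref{t:rankthm} is also incomplete: that theorem requires a neighbourhood on which \emph{every} generalised derivative has the same rank, and neither \Cref{l:clarkecont} nor anything else in the paper guarantees that such constant-rank neighbourhoods cover a dense subset of $X^V$. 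As written, the proposal does not establish the theorem.
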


\begin{proof}
    By \Cref{l:openint}, $f_{G,Y}(Y^V)$ has a non-empty interior.
    As $G$ is $(X,Y)$-flattenable,
    it follows that $f_{G,X}(X^V)$ also has a non-empty interior.
    Let $U \subset X^V$ be an open dense set of realisations of $G$ in $X$ where $f_{G,X}$ is $C^\infty$-differentiable.
    By Sard's theorem,
    the set of regular values of the restricted map $f_{G,X}:U\to \mathbb{R}^E$ is a dense subset of $\mathbb{R}^E$.
    It follows that the set $f_{G,X}(X^V)$ contains a regular value as it has a non-empty interior.
    Hence there exists $p$ in $U$ where $\rank df_{G,X}(p)=|E|$,
    and thus $G$ is independent in $X$.
\end{proof}

To prove our second main result we will require the following  concepts.
Let $p$ be a realisation of a graph $G=(V,E)$ in a finite-dimensional normed space $X$ such that the measurement map $f_{G,X}$ is differentiable at $p$.
For any non-zero point $x \in X$,
we will denote the derivative of $\|\cdot\|_X$ at $x$ (if it exists) by $x^* \in X^*$,
where $X^*$ is the dual space of $X$.
For each edge $vw \in E$,
fix $\varphi_{v,w}^X := (p_v-p_w)^*$;
it follows from $p$ being a differentiable point of $f_{G,X}$ that each functional $\varphi_{v,w}^X$ exists.
We say $p$ has the \textit{graded independence property} if we can order the vertices $v_1, \ldots, v_n$ so that for each $1 < j \leq n$, the set 
\begin{align*}
    \varphi^X(G,p)_j := \left\{\varphi^X_{v_j,v_i} : 1 \leq i < j, ~ v_i v_j \in E \right\}
\end{align*}
is linearly independent; 
we shall refer to $v_n$ as the \textit{highest vertex} of $G$. 

\begin{lemma}\label{lemsmallind}
    Let $X$ be a $d$-dimensional normed space. 
    Then for each $2 \leq n \leq d+1$, there exists a realisation of the complete graph $K_n$ with the graded independence property.
\end{lemma}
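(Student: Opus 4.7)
The plan is to argue by induction on $n$. For the base case $n = 2$, Rademacher's theorem applied to the Lipschitz function $\|\cdot\|_X$ guarantees that the set $D$ of points at which the norm is Fr\'{e}chet differentiable has full Lebesgue measure in $X$; I may therefore choose $p_{v_1} = 0$ and $p_{v_2}$ to be any nonzero element of $D$, so that $\varphi^X_{v_2,v_1}$ exists and is a unit functional, hence nonzero.

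For the inductive step, suppose $p_{v_1}, \ldots, p_{v_{n-1}}$ realises $K_{n-1}$ with the graded independence property, so that $W := \operatorname{span} \varphi^X(K_{n-1},p)_{n-1} \subseteq X^*$ has dimension exactly $n - 2$. Because $n \leq d+1$, $W$ is a proper subspace of $X^*$. I will place $p_{v_n} = p_{v_{n-1}} + v'$ for a small vector $v' \in X$ chosen so that (i) each $p_{v_n} - p_{v_i}$ lies in $D$, and (ii) the set $\varphi^X(K_n,p)_n$ is linearly independent. For $v'$ small, the derivatives $(v' + (p_{v_{n-1}} - p_{v_i}))^*$ with $i < n-1$ approach $(p_{v_{n-1}} - p_{v_i})^*$, while $(v')^*$ depends only on the direction of $v'$, so it will suffice to produce $v' \in D$ with $(v')^* \notin W$: linear independence is an open condition in $(X^*)^{n-1}$ and will be preserved under small enough perturbation.

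The critical auxiliary fact, which I expect to be the main obstacle, is that the image of the duality map $\pi \colon D \to X^*$, $x \mapsto x^*$, spans $X^*$. I would prove this by contradiction: if $\pi(D)$ is contained in a proper subspace $H \subsetneq X^*$, pick $u \in X \setminus \{0\}$ annihilated by $H$; for any $t > 0$, every $\varphi \in \partial \|\cdot\|_X(tu)$ satisfies $\varphi(u) = \|u\|_X > 0$ by $1$-homogeneity of the norm. On the other hand, approximating $tu$ by smooth points $x_k \to tu$ and applying upper semi-continuity of the subdifferential (the convex analogue of \Cref{l:clarkecont}) produces a subsequential limit of $(x_k^*)$ lying in $\partial \|\cdot\|_X(tu) \cap H$, which therefore annihilates $u$, a contradiction. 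Consequently some $v \in D$ has $v^* \notin W$, and by $1$-homogeneity $\varepsilon v \in D$ with $(\varepsilon v)^* = v^*$ for every $\varepsilon > 0$.

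Finally I would perturb $\varepsilon v$ within the set
\begin{equation*}
S := D \cap \bigcap_{i < n-1}\bigl(D - (p_{v_{n-1}} - p_{v_i})\bigr),
\end{equation*}
which is a finite intersection of full-measure translates of $D$, hence has full measure and is dense. Continuity of $\pi$ on $D$ (which follows from the subdifferential being a singleton on $D$ combined with its upper semi-continuity) guarantees that for $v' \in S$ sufficiently close to $\varepsilon v$ one has $(v')^* \notin W$ and $(v' + (p_{v_{n-1}} - p_{v_i}))^*$ close to $(p_{v_{n-1}} - p_{v_i})^*$ for each $i < n-1$; choosing $\varepsilon$ small enough then yields a $p_{v_n}$ satisfying both (i) and (ii) and closes the induction.
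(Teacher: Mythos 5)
Your proposal is correct and follows essentially the same route as the paper's proof: induct on $n$, place the new vertex at a small perturbation $p_{v_{n-1}} + v'$ with $v'$ drawn from the conull set of points whose relevant differences are all smooth points of the norm, and use continuity of the duality map on its set of differentiability together with openness of linear independence. The only substantive addition is your self-contained contradiction argument that the duality map's image on smooth points spans $X^*$, a claim the paper asserts more tersely via Rockafellar's theorem.
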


\begin{proof}
    It is immediate that any realisation of $K_2$ in $X$ will have the graded independence property.
    Suppose $n>2$ and that the result holds for each complete graph with at most $n-1$ vertices.
    We shall now show the result holds for $K_n$.
    
    By our inductive assumption,
    there exists a realisation $q$ of $K_{n-1}$ which has the graded independence property with respect to some vertex ordering $v_1, \ldots, v_{n-1}$. Label the vertices of $K_n$ as $v_1, \ldots, v_{n-1}, v_n$.
    Define $A\subset X$ to be the set of points where $\|\cdot\|_X$ is differentiable and define the subset
    \begin{align*}
        B := \bigcap_{i=1}^{n-1} \{ x+ q_{v_i}: x \in A \}.
    \end{align*}
    By \cite[Theorem 25.5]{rockafellar}, $A$ is conull with respect to the Lebesgue measure  and the duality map $\psi: A\to X^*$, $\psi(x)= x^*$, is continuous. 
    It follows that $B$ is also conull, and hence a dense subset of $X$, and that the linear span of $\psi(A)=\{x^* : x\in A\}$ is $X^*$.
    Hence there exists $y \in A$ such that the functionals,
    \begin{align*}
        \varphi^X_{v_{n-1},v_1} ~, ~ \ldots ~ , ~ \varphi^X_{v_{n-1},v_{n-2}} ~,~ y^*
    \end{align*}
    are linearly independent in $X^*$.
    
    Let $(x_k)_{k \in \mathbb{N}}$ be the sequence in $X$ given by $x_k := q_{v_{n-1}} + \frac{1}{k} y$, so that    $x_k-q_{v_{n-1}} \in A$ and $(x_k-q_{v_{n-1}})^*=y^*$ for each $k \in \mathbb{N}$.
    As $B$ is dense in $X^*$,
    we may choose for each $k \in \mathbb{N}$ some element $z_k \in B$ sufficiently close to $x_k$ so we may suppose that 
    \begin{align*}
    z_k\to q_{v_{n-1}} \quad \text{ and } \quad
         \|(z_k-q_{v_{n-1}})^* - y^*\|_{X^*}=\|(z_k-q_{v_{n-1}})^* - (x_k-q_{v_{n-1}})^*\|_{X^*}         < \frac{1}{k}.
    \end{align*}
    Define the map $J :B \rightarrow L(X, \mathbb{R}^{n-1})$, where for each $z \in B$, $J(z)$ is the linear map
    \begin{align*}
    x \mapsto J(z)x := \Big((z - q_{v_1})^* (x) ~ , ~ \ldots ~ , ~ (z - q_{v_{n-1}})^*(x) \Big).
    \end{align*}
    Since $\psi$ is continuous on  $A$,
    the map $J$ is continuous.
    Now let $T \in L(X, \mathbb{R}^{n-1})$ be the linear map where for all $x \in X$,
    \begin{align*}
    T(x) := \left(\varphi^X_{v_{n-1},v_1}(x) ~, ~ \ldots ~, ~ \varphi_{v_{n-1},v_{n-2}}^X (x) ~, ~ y^*(x) \right).
    \end{align*}
    Because of our choice of $y$, the map $T$ is surjective. 
    Since the subset $S$ of surjective linear maps in $L(X, \mathbb{R}^{n-1})$ is open and $J(z_k) \rightarrow T$ as $k \rightarrow \infty$, there exists some $N \in \mathbb{N}$ where $J(z_N) \in S$. 
    
    Define $p$ to be the realisation of $K_n$ with $p_{v_i} = q_{v_i}$ for $1 \leq i \leq n-1$ and $p_{v_n} = z_N$. 
    As $J(z_N)$ is surjective,
    the set $\varphi^X(K_n,p)_n$ is linearly independent.
    For each $1 \leq j \leq n-1$,
    we have $\varphi^X(K_n,p)_j = \varphi^X(K_{n-1},q)_j$,
    hence $p$ has the graded independence property as required.
\end{proof}

\begin{lemma}\label{gip}
    Let $X$ be a finite-dimensional normed space, $G$ be a graph and $p$ be a differentiable point of the measurement map $f_{G,X}$. 
    If $p$ has the graded independence property, then $p$ is an independent realisation of $G$ in $X$.
\end{lemma}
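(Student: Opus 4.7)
The plan is to unpack what $df_{G,X}(p)$ looks like in coordinates and then exploit the ordering given by the graded independence property via a clean reverse induction. Since the measurement map sends $(x_v)_{v \in V}$ to $(\|x_v - x_w\|_X)_{vw \in E}$, its differential at a differentiable point $p$ has, for each edge $vw \in E$, a row (viewed in the block of column coordinates indexed by $V$) with $\varphi_{v,w}^X$ in the block of vertex $v$, $\varphi_{w,v}^X = -\varphi_{v,w}^X$ in the block of vertex $w$, and zero in every other vertex block. I want to show that these $|E|$ rows are linearly independent, so that $\rank df_{G,X}(p) = |E|$.

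Let $v_1,\dots,v_n$ be the ordering witnessing the graded independence property, and for $1 < j \leq n$ let $E_j = \{v_i v_j \in E : i < j\}$, so that $E$ is the disjoint union of the $E_j$. Suppose one has a linear relation $\sum_{vw \in E} \lambda_{vw} R_{vw} = 0$, where $R_{vw}$ is the row of edge $vw$ in $df_{G,X}(p)$. I would then perform reverse induction on $j = n, n-1, \dots, 2$ to show that $\lambda_{vw} = 0$ for every edge $vw \in E_j$. The base case $j = n$ is obtained by restricting the relation to the column block of $v_n$: only rows corresponding to edges in $E_n$ can contribute nonzero entries there, and that contribution is exactly $\sum_{v_i v_n \in E_n} \lambda_{v_i v_n} \varphi^X_{v_n,v_i}$, which must vanish. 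Linear independence of $\varphi^X(G,p)_n$ then forces $\lambda_{v_i v_n} = 0$ for all $v_i v_n \in E_n$.

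For the inductive step, assume $\lambda_{vw} = 0$ for every edge appearing in $E_k$ with $k > j$. Restricting the relation to the column block of $v_j$, the only nonzero contributions come from edges incident to $v_j$; an edge of the form $v_j v_k$ with $k > j$ already has vanishing coefficient by induction, so the surviving contribution is $\sum_{v_i v_j \in E_j} \lambda_{v_i v_j} \varphi^X_{v_j, v_i} = 0$. Linear independence of $\varphi^X(G,p)_j$ then gives $\lambda_{v_i v_j} = 0$ for all $v_i v_j \in E_j$, completing the induction.

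Since every edge of $G$ lies in some $E_j$, this shows the rows $R_{vw}$ are linearly independent, hence $\rank df_{G,X}(p) = |E|$, so $p$ is an independent realisation. I do not expect any real obstacle here; the only care needed is the bookkeeping to confirm that after each step of the reverse induction the only surviving edges contributing to the column block of $v_j$ are precisely those in $E_j$, which is immediate once one notes that an edge $v_j v_k$ with $k > j$ has already been eliminated and that edges in $E_i$ with $i < j$ do not touch the $v_j$-block at all.
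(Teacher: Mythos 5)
Your proposal is correct and is essentially the paper's own argument: a linear relation $\sum_{vw}\lambda_{vw}R_{vw}=0$ among the rows of $df_{G,X}(p)$, restricted to the column block of a vertex $v$, is exactly the equation $\sum_{w\sim v}a(vw)\varphi^X_{v,w}=0$ that the paper works with, and your reverse induction on the vertex ordering is the same induction the paper runs. The only difference is presentational (you prove linear independence of the rows directly, the paper assumes a nonzero dependence and derives a contradiction), so there is nothing further to add.
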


\begin{proof}
    Suppose that $p$ is not an independent realisation of $G$ in $X$,
    i.e., $\rank df_{G,X} (p) < |E|$.
    Then there exists a non-zero map $a : E \rightarrow \mathbb{R}$ where for each $v \in V$ we have the following equality (here $w \sim v$ denotes that $vw \in E$):
    \begin{equation}\label{eq:stress}
        \sum_{w \sim v} a(vw) \varphi^X_{v,w} = 0.
    \end{equation}
    We shall prove that any map $a$ that satisfies the above conditions must be the zero map and hence obtain a contradiction.
    
     Since $p$ has the graded independence property with respect to some vertex ordering $v_1, \ldots, v_{n}$, 
    the set $\varphi^X(G,p)_n$ is linearly independent,
    and so $a(v_n w) = 0$, for every $w \sim v_n$.
    Now suppose that for some $1 \leq k < n$,
    every edge $v_i v_j \in E$ with either $i \geq k+1$ or $j \geq k+1$ gives $a(v_i v_j) = 0$.
    Then \cref{eq:stress} at vertex $v_k$ gives
    \begin{align*}
        0 = \sum_{w \sim v_k} a(v_k w) \varphi^X_{v_k , w} = \sum_{v_i \sim v_k,~ i < k} a(v_k v_i) \varphi^X_{v_k , v_i},
    \end{align*}
    and so, since the set $\varphi^X(G,p)_k$ is linearly independent, we have $a(v_k w) = 0$ for every edge $v_k w \in E$.
    By induction it follows that $a(vw) = 0$ for every edge $vw \in E$,
    contradicting our initial assumption that $a$ is a non-zero map.
\end{proof}

\begin{lemma}\label{L:IND1}
    Let $G$ be a graph with $n$ vertices and $X$ be a normed space of dimension $n-1 \leq d < \infty$.
    Then $G$ is independent in $X$.
\end{lemma}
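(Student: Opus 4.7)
The plan is to deduce this directly from \Cref{lemsmallind} and \Cref{gip}. Since $\dim X = d \geq n-1$, we have $n \leq d+1$, so \Cref{lemsmallind} furnishes a realisation $p : V \to X$ of the complete graph $K_n$ on the vertex set $V$ of $G$ which is a differentiable point of $f_{K_n,X}$ and has the graded independence property with respect to some ordering $v_1, \ldots, v_n$ of $V$.

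First I would observe that $p$ may be regarded as a realisation of $G$ on the same vertex set. Since $E = E(G) \subseteq E(K_n)$, the map $f_{G,X}$ is just the projection of $f_{K_n,X}$ onto the coordinates indexed by $E$; in particular, $p$ is a differentiable point of $f_{G,X}$, and each functional $\varphi^X_{v,w}$ with $vw \in E$ is well defined and agrees with its counterpart computed in $K_n$. Next, I would verify the graded independence property for $p$ viewed as a realisation of $G$ under the same vertex ordering $v_1, \ldots, v_n$. For each $1 < j \leq n$, the set
\begin{equation*}
\varphi^X(G,p)_j = \{\varphi^X_{v_j,v_i} : 1 \leq i < j,\ v_i v_j \in E\}
\end{equation*}
is a subset of $\varphi^X(K_n,p)_j$, hence inherits linear independence. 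Applying \Cref{gip} to $G$ and $p$ then yields that $p$ is an independent realisation of $G$ in $X$, so $G$ is independent in $X$.

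There is no real obstacle here; the argument is essentially mechanical. The only thing worth spelling out explicitly is that both the differentiability of the measurement map at $p$ and the graded independence property are inherited by spanning subgraphs, since both are phrased edge-by-edge and linear independence is closed under taking subsets of the families $\varphi^X(K_n,p)_j$ produced by \Cref{lemsmallind}.
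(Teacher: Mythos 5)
Your proposal is correct and follows exactly the paper's argument: invoke \Cref{lemsmallind} to obtain a realisation of $K_n$ with the graded independence property, observe that it restricts to such a realisation of the spanning subgraph $G$, and conclude via \Cref{gip}. The extra details you spell out (differentiability and linear independence both passing to the edge subset) are exactly what the paper compresses into ``it is immediate''.
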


\begin{proof}
    By \Cref{lemsmallind}, there exists a realisation $p$ of $K_n$ in $X$ with the graded independence property.
    It is immediate that $p$ is also a realisation of $G$ in $X$ with the graded independence property.
    Hence by \Cref{gip}, $G$ is independent in $X$ as required.
\end{proof}

\begin{theorem}\label{c:ind}
    Let $X$ be a finite-dimensional normed space where the norm is $C^\infty$-differentiable on an open dense subset of $X$,
    and let $Y$ be an infinite-dimensional normed space. 
    If a graph $G$ is $(X,Y)$-flattenable,
    then $G$ is independent in $X$.
\end{theorem}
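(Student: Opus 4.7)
The plan is to reduce to \Cref{t:ind} by restricting to a well-chosen finite-dimensional subspace of $Y$. Set $n = |V(G)|$. Since $Y$ is infinite-dimensional, I can pick any subspace $Y' \subset Y$ with $\dim Y' = n-1$ (or any integer $\geq n-1$). The inclusion $Y' \hookrightarrow Y$ is an isometric embedding, so by \Cref{p:isomembed}, every graph is $(Y,Y')$-flattenable.

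Next, I would combine this with the hypothesis that $G$ is $(X,Y)$-flattenable via \Cref{p:xyzembed} (with the triple $(X,Y,Y')$) to conclude that $G$ is $(X,Y')$-flattenable. Alternatively, one can write this out directly by noting
\begin{equation*}
    f_{G,Y'}((Y')^V) \subseteq f_{G,Y}(Y^V) \subseteq f_{G,X}(X^V),
\end{equation*}
where the first containment uses that the isometric inclusion $Y' \hookrightarrow Y$ preserves edge lengths, and the second is the definition of $(X,Y)$-flattenability.

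Now, since $\dim Y' = n - 1$, \Cref{L:IND1} applies and gives that $G$ is independent in $Y'$. At this point both hypotheses of \Cref{t:ind} are satisfied: $X$ has the required smoothness property by assumption, $Y'$ is finite-dimensional, $G$ is independent in $Y'$, and $G$ is $(X,Y')$-flattenable. Applying \Cref{t:ind} yields that $G$ is independent in $X$, as required.

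There is no real obstacle here; the argument is a clean corollary of the earlier results. The only delicate point is the very first step -- making sure that a finite-dimensional subspace of the desired dimension actually exists inside $Y$ -- but this is immediate from $Y$ being infinite-dimensional, since one can extract linearly independent vectors $y_1, \dots, y_{n-1}$ and take their linear span equipped with the restricted norm. Everything else follows by chaining together \Cref{p:isomembed}, \Cref{p:xyzembed}, \Cref{L:IND1}, and \Cref{t:ind}.
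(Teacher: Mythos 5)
Your proposal is correct and follows essentially the same route as the paper: the paper's proof also restricts to a finite-dimensional subspace of $Y$ (it uses dimension $n$ rather than $n-1$, which is immaterial since \Cref{L:IND1} only needs dimension at least $n-1$), invokes \Cref{L:IND1} for independence there, and concludes via \Cref{t:ind}. Your extra step of justifying $(X,Y')$-flattenability through \Cref{p:isomembed} and \Cref{p:xyzembed} is a correct elaboration of what the paper leaves implicit.
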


\begin{proof}
    Fix $n$ to be the number of vertices of $G$. Choose any $n$-dimensional subspace $Z$ of $Y$.
    %By \Cref{p:isomembed}, $G$ is $(Y,Z)$-flattenable.
    Then $G$ is $(X,Z)$-flattenable. % by \Cref{p:xyzembed}. 
    By \Cref{L:IND1},
    $G$ is independent in $Z$.
    The result now follows from applying \Cref{t:ind} to the triple $G,X,Z$.
\end{proof}

\begin{remark}
    If a graph $G=(V,E)$ is independent in $\ell_\infty^d$,
    then there exists pairwise-disjoint edge subsets $T_1,\ldots, T_d$ such that the graphs $(V,T_1), \ldots, (V,T_d)$ are forests and $E = \bigcup_{i=1}^d T_d$ (see \cite{K15} for more detail).
    Using this, it is immediate that \Cref{c:ind} gives an alternative proof of \cite[Lemma 2.5]{fhjv2017},
    namely that if $G$ is $(\ell_\infty^d, \ell_\infty)$-flattenable, then the edges of $G$ can be partitioned into $d$ edge-disjoint forests.
\end{remark}

% \begin{remark}
%     Both \Cref{L:IND1} and \Cref{t:ind} fail to be true when $X$ is allowed to be infinite-dimensional for the simple reason that the map $f_{G,X}$ can in this case be nowhere differentiable.
%     For example,
%     the normed space $\ell_1$ has a nowhere differentiable norm,
%     and hence the map $f_{G,\ell_1}$ is nowhere differentiable if $G$ has at least one edge.
% \end{remark}

\section{Flattening from \texorpdfstring{$\ell_p$}{lp} spaces}\label{sec:lp}

In this section we restrict to the cases where either $X$ or $Y$ are $\ell_p$ spaces.

\subsection{Bounds on flattenability}

For this subsection we prove the following result.

\begin{theorem}\label{t:linfl2}
    Let $X,Y$ be normed spaces and $G=(V,E)$ be a graph.
    \begin{enumerate}
        \item \label{t:linfl2item1} If $G$ is $(X,\ell_\infty)$-flattenable graph, then it is $(X,Y)$-flattenable.
        \item \label{t:linfl2item2} Suppose $X$ is finite-dimensional and $Y$ is infinite-dimensional.
        If $G$ is $(X,Y)$-flattenable graph, then it is $(X,\ell_2)$-flattenable.
    \end{enumerate}
\end{theorem}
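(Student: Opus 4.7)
For part (i), the idea is to realise every finite metric space inside $\ell_\infty$ via the Fr\'{e}chet--Kuratowski embedding. Given any $q \in Y^V$, define $\Phi : V \to \mathbb{R}^V$ (equipped with the sup norm) by $\Phi(v) := (\|q_v - q_u\|_Y)_{u \in V}$. The reverse triangle inequality gives $|\|q_v - q_u\|_Y - \|q_w - q_u\|_Y| \leq \|q_v - q_w\|_Y$ for every $u \in V$, with equality attained at $u = w$, so $\|\Phi(v) - \Phi(w)\|_\infty = \|q_v - q_w\|_Y$. Since the finite-dimensional sup-norm space embeds isometrically into $\ell_\infty$, this produces a realisation in $\ell_\infty$ inducing exactly the edge lengths of $q$, giving $f_{G,Y}(Y^V) \subseteq f_{G,\ell_\infty}(\ell_\infty^V)$. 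Combining this with $(X,\ell_\infty)$-flattenability via \Cref{p:xyzembed} yields $(X,Y)$-flattenability.

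For part (ii), the plan is to establish the chain of inclusions
\begin{equation*}
f_{G,\ell_2}(\ell_2^V) \;\subseteq\; \overline{f_{G,Y}(Y^V)} \;\subseteq\; \overline{f_{G,X}(X^V)} \;=\; f_{G,X}(X^V),
\end{equation*}
whose middle inclusion is immediate from $(X,Y)$-flattenability. For the first inclusion I would invoke Dvoretzky's theorem. Given $q \in \ell_2^V$, the images $\{q_v : v \in V\}$ lie in a finite-dimensional subspace $W \subseteq \ell_2$ of dimension $k \leq |V|$, and $W$ is isometrically isomorphic to $\ell_2^k$. By Dvoretzky's theorem, for every $\varepsilon > 0$ there is a linear injection $T_\varepsilon : W \to Y$ satisfying $(1+\varepsilon)^{-1}\|x\|_2 \leq \|T_\varepsilon x\|_Y \leq \|x\|_2$ for all $x \in W$. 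Setting $q_\varepsilon := T_\varepsilon \circ q \in Y^V$ then gives $f_{G,Y}(q_\varepsilon) \to f_{G,\ell_2}(q)$ as $\varepsilon \to 0^+$, placing $f_{G,\ell_2}(q)$ in the closure of $f_{G,Y}(Y^V)$.

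For the final equality $\overline{f_{G,X}(X^V)} = f_{G,X}(X^V)$, the finite-dimensionality of $X$ is essential. Given $p_n \in X^V$ with $f_{G,X}(p_n) \to d$, I would translate each connected component of $G$ separately so that a fixed base vertex sits at the origin. By iterating the triangle inequality along a path in $G$ from the base vertex, each position $p_n(v)$ is bounded by a constant depending only on $G$ times $\|f_{G,X}(p_n)\|_\infty$, hence lies in a compact ball of $X$. A Bolzano--Weierstrass extraction combined with the continuity of $f_{G,X}$ produces a limit $p \in X^V$ with $f_{G,X}(p) = d$. The single nontrivial ingredient in the whole argument is Dvoretzky's theorem, which is also the only step using the infinite-dimensionality of $Y$; every other step is elementary compactness or a triangle-inequality manipulation.
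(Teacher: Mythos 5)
Your proposal is correct. Part (i) is essentially the paper's argument: the paper cites Fr\'{e}chet's theorem (\Cref{t:embed}) as a black box where you instead write out the Kuratowski embedding $\Phi(v) = (\|q_v - q_u\|_Y)_{u \in V}$ explicitly, but the content is identical, and the conclusion via \Cref{p:xyzembed} matches.

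Part (ii) takes a genuinely different route. The paper's key ingredient is Shkarin's theorem (\Cref{t:shkarin}): every finite \emph{affinely independent} subset of $\ell_2$ embeds isometrically into any normed space of sufficiently large finite dimension. This realises a dense set of configurations of $\ell_2^V$ \emph{exactly} in $Y$, flattens them into $X$ by hypothesis, and then closes up via a compactness lemma (\Cref{l:dense}). You instead invoke Dvoretzky's theorem to get, for every $\varepsilon>0$, a $(1+\varepsilon)$-isomorphic copy of $\ell_2^k$ inside $Y$, so \emph{every} configuration of $\ell_2^V$ is realised in $Y$ only \emph{approximately}; the error is then absorbed by the closedness of $f_{G,X}(X^V)$, which you establish by the same translate-and-extract compactness argument the paper uses in \Cref{l:dense}. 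So the two proofs trade off where the limiting process happens: the paper approximates the target configuration in $\ell_2$ by affinely independent ones and embeds each exactly, while you fix the target configuration and approximate its embedding into $Y$. Both reduce to the identical finite-dimensional compactness step at the end. Your version leans on a deeper but more widely known theorem (Dvoretzky) and yields a cleaner one-line chain of inclusions; the paper's version uses a more specialised exact-embedding result and avoids any appeal to almost-Euclidean sections. One cosmetic point: your displayed chain asserts $f_{G,\ell_2}(\ell_2^V) \subseteq \overline{f_{G,Y}(Y^V)}$ before noting that the middle and last links already give $\overline{f_{G,Y}(Y^V)} \subseteq f_{G,X}(X^V)$; that ordering is fine, but the closedness of $f_{G,X}(X^V)$ (not of $f_{G,Y}(Y^V)$) is what carries the load, exactly as you say.
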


\Cref{t:linfl2} shows us that so long as $\dim X < \infty = \dim Y$, the set of $(X,Y)$-flattenable graph will: (i) contain the set of $(X,\ell_\infty)$-flattenable graphs, and (ii) be contained in the set of $(X,\ell_2)$-flattenable graphs.
This result shall be important later in \Cref{sec:normedplane}.

For the proof of \Cref{t:linfl2},
we require the following three results.

\begin{theorem}[\cite{frechet10}]\label{t:embed}
	For every finite metric space $(M,d)$,
	there exists an isometry $f:M \rightarrow \ell_\infty$.
\end{theorem}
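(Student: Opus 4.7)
The plan is to use the classical Fréchet embedding, which is elementary enough that the ``sketch'' and the actual proof essentially coincide. First I would enumerate the finite metric space as $M = \{x_1, \ldots, x_n\}$, and define a candidate map $f: M \to \ell_\infty$ by sending each $x_i$ to the sequence
\begin{equation*}
    f(x_i) := \bigl( d(x_i, x_1),\, d(x_i, x_2),\, \ldots,\, d(x_i, x_n),\, 0,\, 0,\, \ldots \bigr).
\end{equation*}
Each such sequence is bounded (indeed eventually zero), so it genuinely lies in $\ell_\infty$.

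Next I would verify isometry. Fix $i,j \in \{1,\ldots,n\}$ and compute
\begin{equation*}
    \|f(x_i) - f(x_j)\|_\infty = \sup_{1 \leq k \leq n} |d(x_i,x_k) - d(x_j,x_k)|.
\end{equation*}
The triangle inequality in $(M,d)$ gives $|d(x_i,x_k) - d(x_j,x_k)| \leq d(x_i,x_j)$ for every $k$, providing the upper bound. To see the bound is tight, evaluate at $k=i$ (or $k=j$): since $d(x_i,x_i)=0$, the $i$-th coordinate of $f(x_i) - f(x_j)$ equals $-d(x_j,x_i) = -d(x_i,x_j)$, whose absolute value is exactly $d(x_i,x_j)$. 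Hence $\|f(x_i) - f(x_j)\|_\infty = d(x_i,x_j)$, so $f$ is an isometric embedding.

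There is no real obstacle here; the only subtlety worth noting is that the dimension of the ambient space depends on $|M|$, so one implicitly uses the canonical isometric inclusion $\ell_\infty^n \hookrightarrow \ell_\infty$ by padding with zeros. The statement could equally well be phrased as ``embeds into $\ell_\infty^n$'' when $|M| = n$, but $\ell_\infty$ is stated in the theorem because this is the version used later to compare against arbitrary target spaces.
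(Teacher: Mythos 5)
Your proof is correct and is exactly the classical Fr\'{e}chet embedding argument that the cited reference \cite{frechet10} contains; the paper itself only cites this result without reproving it. The verification that the supremum is attained at the coordinate $k=i$ is the key point, and you have it right.
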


\begin{theorem}[{\cite[Theorem 1]{Shkarin}}]\label{t:shkarin}
    Let $S$ be a finite affinely independent subset of $\ell_2$.
    Then there exists $n \in \mathbb{N}$ such that for any normed space $X$ with $\dim X \geq n$,
    the set $S$ can be isometrically embedded into $X$. 
\end{theorem}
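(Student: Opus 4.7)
The plan is to handle the two parts separately, each by converting a realisation in the source space into one in an intermediate space where the flattenability hypothesis can be applied. Part \ref{t:linfl2item1} is a short application of Fréchet's universality theorem, while part \ref{t:linfl2item2} requires a perturbation-and-limit argument to overcome the affine independence hypothesis of Shkarin's theorem.

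For part \ref{t:linfl2item1}, I would argue as follows. Let $q \in Y^V$ be an arbitrary realisation. The image $q(V)$, equipped with the metric induced by $\|\cdot\|_Y$, is a finite metric space, so by Theorem \ref{t:embed} it embeds isometrically into $\ell_\infty$. Composing with $q$ yields $q' \in \ell_\infty^V$ with $f_{G,\ell_\infty}(q') = f_{G,Y}(q)$. Applying the $(X,\ell_\infty)$-flattenability of $G$ to $q'$ produces a realisation in $X$ matching the prescribed edge lengths, so $G$ is $(X,Y)$-flattenable.

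For part \ref{t:linfl2item2}, the strategy is to pass through $Y$ using Theorem \ref{t:shkarin}, but Shkarin requires affine independence, which a given realisation $p \in \ell_2^V$ need not satisfy. First I reduce to the case where $G$ is connected, since $(X,Y)$-flattenability is preserved under disjoint unions of components. Given $p \in \ell_2^V$ with target edge-length vector $d := f_{G,\ell_2}(p)$, I construct a sequence $p_k \to p$ in $\ell_2^V$ with each $p_k(V)$ affinely independent: since $\ell_2$ is infinite-dimensional, I can pick an orthonormal family $\{e_v\}_{v \in V}$ orthogonal to the finite-dimensional span of $p(V)$ and set $(p_k)_v := p_v + \tfrac{1}{k} e_v$, after which orthogonality makes the vectors $(p_k)_v - (p_k)_{v_0}$ linearly independent for any base vertex $v_0$. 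For each $k$, Theorem \ref{t:shkarin} produces an integer $n_k$ so that $p_k(V)$ embeds isometrically into any normed space of dimension $\geq n_k$; taking such a subspace of $Y$ gives $q_k \in Y^V$ with $f_{G,Y}(q_k) = f_{G,\ell_2}(p_k)$, and $(X,Y)$-flattenability then yields $r_k \in X^V$ with $f_{G,X}(r_k) = f_{G,\ell_2}(p_k)$.

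It remains to pass to a limit in $X$. Since $f_{G,\ell_2}(p_k) \to d$, the edge lengths of the $r_k$ are uniformly bounded by some $M$. Fix a vertex $v_0$ and translate each $r_k$ so $(r_k)_{v_0} = 0$; connectedness of $G$ then forces every $(r_k)_v$ to lie in the closed ball of radius $M \cdot \mathrm{diam}(G)$ about the origin of $X$. Since $X$ is finite-dimensional, this ball is compact, so a subsequence $r_{k_j}$ converges to some $r \in X^V$, and continuity of $f_{G,X}$ gives $f_{G,X}(r) = d$. Thus $G$ is $(X,\ell_2)$-flattenable. The main obstacle is the affine independence hypothesis in Theorem \ref{t:shkarin}; the perturbation-and-limit argument is the natural way around it, and the final compactness step depends essentially on both $\dim X < \infty$ and the connectedness reduction.
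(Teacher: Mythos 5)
Your proposal does not address the statement you were asked to prove. The statement is \Cref{t:shkarin} itself --- Shkarin's embedding theorem, asserting that a finite affinely independent subset $S$ of $\ell_2$ embeds isometrically into every normed space of sufficiently large dimension. What you have written is instead a proof of \Cref{t:linfl2}, and in its second part you explicitly invoke \Cref{t:shkarin} as a black box to produce the realisations $q_k \in Y^V$. An argument that assumes the statement cannot serve as a proof of it. Note also that the paper contains no internal proof of \Cref{t:shkarin}: it is quoted directly from \cite{Shkarin}, so a genuine proof would have to engage with the geometry of arbitrary norms in high dimensions (this is the content of Shkarin's paper) and is of a completely different character from anything in your write-up.

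For what it is worth, as a proof of \Cref{t:linfl2} your argument is essentially correct and follows the same route as the paper: part one is the same application of Fr\'{e}chet's theorem (\Cref{t:embed}), and your perturbation-and-limit argument for part two is precisely what the paper factors into \Cref{l:dense} --- density of affinely independent configurations in $\ell_2^V$, followed by translating to pin a base vertex and extracting a convergent subsequence using connectedness of $G$ and $\dim X < \infty$. But this is a different theorem, and it sits downstream of the one in question.
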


\begin{lemma}\label{l:dense}
    Let $G=(V,E)$ be any graph, $X$ be a finite-dimensional normed space and $Y$ be any normed space.
    Suppose that there exists a dense subset $D \subset Y^V$ where for each $p \in D$ there exists $q \in X^V$ where $f_{G,X}(q)=f_{G,Y}(p)$.
    Then $G$ is $(X,Y)$-flattenable.
\end{lemma}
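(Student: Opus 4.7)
The plan is to show that an arbitrary realisation $p \in Y^V$ has its edge-length vector $f_{G,Y}(p)$ realised in $X$, by approximating $p$ by elements of $D$ and exploiting finite-dimensionality of $X$. Pick a sequence $(p_n)$ in $D$ with $p_n \to p$; continuity of the measurement map then gives $f_{G,Y}(p_n) \to f_{G,Y}(p)$. For each $n$, the hypothesis yields $q_n \in X^V$ with $f_{G,X}(q_n) = f_{G,Y}(p_n)$. The goal becomes extracting a subsequential limit $q$ of $(q_n)$ and concluding $f_{G,X}(q) = f_{G,Y}(p)$ by continuity.

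The immediate issue is that $(q_n)$ carries no a priori bound, since $f_{G,X}$ is translation-invariant on each connected component of $G$. To fix this, I would pass to the connected components $G_1,\dots,G_r$ of $G$ (treating isolated vertices as singleton components). In each component $G_i$, pick a designated vertex $u_i$; replace $q_n$ by a translate on each component so that $q_n(u_i) = 0$ for all $i$ and all $n$. This does not alter $f_{G,X}(q_n)$.

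Next, I would show $(q_n)$ is bounded in $X^V$. Given any vertex $v$ in component $G_i$, fix a path $u_i = w_0, w_1,\ldots, w_k = v$ in $G_i$. The triangle inequality gives
\begin{equation*}
\|q_n(v)\|_X \leq \sum_{j=1}^{k} \|q_n(w_j) - q_n(w_{j-1})\|_X,
\end{equation*}
which is a sum of edge coordinates of $f_{G,X}(q_n) = f_{G,Y}(p_n)$. Since $f_{G,Y}(p_n)$ converges in $\mathbb{R}^E$, it is bounded, so $\|q_n(v)\|_X$ is uniformly bounded in $n$. Because $X$ is finite-dimensional, so is $X^V$, and the bounded sequence $(q_n)$ admits a convergent subsequence $q_{n_k} \to q$. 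Continuity of $f_{G,X}$ then yields $f_{G,X}(q) = \lim_k f_{G,X}(q_{n_k}) = \lim_k f_{G,Y}(p_{n_k}) = f_{G,Y}(p)$, which completes the proof.

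The only real obstacle is the boundedness step, and it is handled entirely by the observation above combined with a translation reduction to kill the invariance of $f_{G,X}$ under component-wise translations; all other steps are standard continuity and compactness arguments in the finite-dimensional space $X$.
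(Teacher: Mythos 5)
Your proposal is correct and follows essentially the same route as the paper's proof: approximate $p$ by a sequence in $D$, normalise the corresponding realisations in $X$ by translation so a base vertex sits at the origin, extract a convergent subsequence using finite-dimensionality, and conclude by continuity of the measurement maps. The only differences are presentational — the paper reduces to the connected case by a ``without loss of generality'' step where you treat components directly, and you spell out the boundedness via the triangle inequality along paths where the paper simply asserts the existence of a compact set containing the $q^n$.
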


\begin{proof}
    Without loss of generality, we may assume $G$ is connected.
    Choose any point $p \in Y^V$.
    Let $(p^n)_{n \in \mathbb{N}}$ be a sequence in $D$ that converges to $p$.
    Fix a vertex $u \in V$.
    For each $p^n$, there exists $q^n \in X^V$ such that $f_{G,X}(q^n) = f_{G,Y}(p^n)$.
    By applying translations, we may assume $q^n_u = 0$ for each $n \in \mathbb{N}$.
    As the graph $G$ is connected and $X$ is finite-dimensional,
    there exists a compact set $C \subset X^V$ such that $q^n \in C$ for all $n \in \mathbb{N}$.
    Hence there exists a convergent subsequence $(q^{n_i})_{i \in \mathbb{N}}$ with limit $q \in X^V$.
    Since both $f_{G,X}$ and $f_{G,Y}$ are continuous,
    we have 
    \begin{align*}
        f_{G,X}(q) = \lim_{i \rightarrow \infty} f_{G,X}(q^{n_i}) = \lim_{i \rightarrow \infty} f_{G,Y}(p^{n_i}) = f_{G,Y}(p)
    \end{align*}
    as required.
\end{proof}

\begin{proof}[Proof of \Cref{t:linfl2}]
	First suppose $G$ is $(X,\ell_\infty)$-flattenable.
	Choose any $q \in Y^V$.
	For each (possibly not distinct) pair $v,w \in V$,
	define $d_{vw} := \|q_v-q_w\|_Y$.
	It follows that we can define a metric space $(V,d)$ where $d(v,w):=d_{vw}$.
	By \Cref{t:embed}, the metric space $(V,d)$ can be isometrically embedded into $\ell_\infty$,
	and so there exists $p' \in \ell_\infty^V$ where $f_{G, \ell_\infty}(p') = f_{G, Y}(q)$.
	As $G$ is $(X,\ell_\infty)$-flattenable, there exists $p \in X^V$ where $f_{G,X}(p) = f_{G, \ell_\infty}(p') = f_{G, Y}(q)$ as required.
	
	Now suppose $G$ is $(X,Y)$-flattenable and $\dim X < \dim Y = \infty$.
	Choose any $p \in \ell_2^V$ so that the set $\{p_v : v \in V\}$ is affinely independent.
    By \Cref{t:shkarin},
    there exists $\widetilde{p} \in Y^V$ where $f_{G,\ell_2}(p) = f_{G,Y}(\widetilde{p})$.
    As $G$ is $(X,Y)$-flattenable,
    it follows that there exists $q \in X^V$ so that $f_{G,X}(q)= f_{G,\ell_2}(p)$.
    As the set of affinely independent realisations in $\ell_2^V$ form a dense subset,
    $G$ is $(X,\ell_2)$-flattenable by \Cref{l:dense}.
\end{proof}

\begin{remark}
    It is important to note that \Cref{t:linfl2}\ref{t:linfl2item2} requires the assumption that $Y$ is infinite-dimensional.
    For example,
    if $\dim X = \dim Y = 1$ then every graph is $(X,Y)$-flattenable by \Cref{p:1dflat}\ref{p:1dflat1}, but the only $(X,\ell_2)$-flattenable graphs are forests by \Cref{p:1dflat}\ref{p:1dflat2}.
    It is, however, unclear whether we require that $X$ is finite-dimensional.
    % If $X=Y$ and $\dim Y = \infty$ then every graph is $(X,Y)$-flattenable,
    % hence we can remove the requirement that $X$ is finite-dimensional from \Cref{t:linfl2}\ref{t:linfl2item2} if and only if we can remove the requirement that $S$ is affinely independent from \Cref{t:shkarin}.
    % Unfortunately, this problem is still currently open.
\end{remark}

\subsection{Varying \texorpdfstring{$p$}{p}}

For a graph $G=(V,E)$ and a normed space $X$,
define the set
\begin{align*}
    \ell(G,X) := \{p \in [1,\infty] : G \text{ is $(X,\ell_p)$-flattenable} \}.
\end{align*}
It follows from \Cref{t:l1,t:lp,t:linf} that $p \in \ell(G,X)$ if and only if,
given $d \geq \binom{|V|}{2}$,
the graph $G$ is $(X,\ell_p^d)$-flattenable.

\begin{proposition}
    For every graph $G=(V,E)$ and every finite-dimensional normed space $X$,
    the set $\ell(G,X)$ is a closed subset of $[1,\infty]$.
\end{proposition}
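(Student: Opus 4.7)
The plan is to reduce to finite-dimensional target spaces and then use a compactness argument. The paper already observes (via \Cref{t:l1,t:lp,t:linf}) that $p\in\ell(G,X)$ if and only if $G$ is $(X,\ell_p^d)$-flattenable for $d:=\binom{|V|}{2}$, which turns the question into a statement about realisations in $\mathbb{R}^d$ equipped with a varying $p$-norm. So I would take a sequence $(p_n)$ in $\ell(G,X)$ converging to some $p^\ast\in[1,\infty]$ and attempt to show that an arbitrary realisation $q\in(\mathbb{R}^d)^V$ of $G$, measured in the $p^\ast$-norm, can be matched by some $r\in X^V$.

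The central observation is that for any two vectors $x,y\in\mathbb{R}^d$, the map $p\mapsto\|x-y\|_p$ is continuous on $[1,\infty]$ (including the endpoint $p=\infty$, where $\|\cdot\|_p\to\|\cdot\|_\infty$). Therefore the edge-length vectors $\ell^{(n)}:=f_{G,\ell_{p_n}^d}(q)$ converge in $\mathbb{R}^E$ to $\ell^\ast:=f_{G,\ell_{p^\ast}^d}(q)$. Since $p_n\in\ell(G,X)$, for each $n$ there is a realisation $r^{(n)}\in X^V$ with $f_{G,X}(r^{(n)})=\ell^{(n)}$.

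To pass to the limit, I would first assume without loss of generality that $G$ is connected (otherwise handle each component separately). Fix a vertex $u\in V$ and translate so that $r^{(n)}_u=0$ for every $n$. Bounding $\|r^{(n)}_v\|_X$ along a spanning tree by the (bounded) entries of $\ell^{(n)}$ confines $(r^{(n)})$ to a compact subset of $X^V$, using crucially that $X$ is finite-dimensional. Extracting a convergent subsequence $r^{(n_k)}\to r^\ast$ and using continuity of $f_{G,X}$, one obtains $f_{G,X}(r^\ast)=\ell^\ast$, so $G$ is $(X,\ell_{p^\ast}^d)$-flattenable and hence $p^\ast\in\ell(G,X)$.

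The proof is essentially routine; the only subtlety is the endpoint behaviour of the $p$-norms at $p=\infty$, which is standard, together with the implicit need for finite-dimensionality of $X$ to guarantee the compactness needed to extract a limiting realisation. I would not expect any genuine obstacle here.
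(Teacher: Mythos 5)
Your proposal is correct and follows essentially the same route as the paper's proof: reduce to $(X,\ell_p^d)$-flattenability for $d=\binom{|V|}{2}$, use continuity of $p\mapsto\|x\|_p$ on $[1,\infty]$ to see that the target edge-length vector is a limit of realisable ones, and then extract a limiting realisation in $X^V$ via connectivity, a pinned base vertex, and compactness from finite-dimensionality of $X$. No meaningful differences.
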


\begin{proof}
    We may assume without loss of generality that $G$ is connected.
    Choose a value $p \in [1,\infty]$ in the closure of $\ell(G,X)$ and fix $d \geq \binom{|V|}{2}$.
    As $p$ lies in the closure of $\ell(G,X)$,
    there exists a sequence $(p_n)_{n \in \mathbb{N}}$ where $p_n\rightarrow p$ as $n\rightarrow \infty$ (if $p=\infty$, this means that $(p_n)_{n\in \mathbb{N}}$ diverges to infinity).
    Choose any realisation $q \in (\ell_p^d)^V$ of $G$.
    For each $n \in \mathbb{N}$, we can consider $q$ to be a realisation of $G$ in $\ell_{p_n}^d$ also, since each normed space has the same underlying vector space (i.e., $\mathbb{R}^d$).
    Fix a vertex $u\in V$.
    Since $G$ is connected, the set
    \begin{align*}
        S := \left\{ r \in X^V : r_u = 0, ~ \max_{vw\in E} \|r_v - r_w\|_X \leq \max_{vw\in E} \sup_{t\in[1\infty]} \|q_v- q_{w}\|_t \right\}
    \end{align*}
    is compact.
    For each $n\in \mathbb{N}$,
    there exists a realisation $r^n \in S$ such that $f_{G,X}(r^n) = f_{G,\ell_{p_n}^d}(q)$ and $r^n_v = 0$.
    As $S$ is compact,
    there exists a convergent subsequence $(r^{n_i})_{i\in \mathbb{N}}$ with limit $r \in S$.
    For each $vw \in E$,
    we have
    \begin{align*}
        \|q_v- q_{w}\|_p = \lim_{i \rightarrow \infty} \|q_v - q_w\|_{p_i} = \lim_{i \rightarrow \infty} \|r^{n_i}_v - r^{n_i}_w\|_X = \|r_v - r_w\|_X,
    \end{align*}
    hence $f_{G,X}(r) = f_{G,\ell_p^d}(q)$ and $G$ is $(X,\ell_p^d)$-flattenable.
\end{proof}

We recall that for any $p \in [1,\infty]$,
$L_p[0,1]$ is the normed space of measurable functions $f:[0,1] \rightarrow \mathbb{R}$ (modulo equality on measure 1 sets) where $\|f\|_{L_p[0,1]} < \infty$,
given the norm
\begin{align*}
    \|f\|_{L_p[0,1]} := \left(\int_0^1 |f(t)|^p dt \right)^{1/p} ~ (p < \infty), \qquad \|f\|_{L_\infty[0,1]} := \esssup_{t \in [0,1]} |f(t)|;
\end{align*}
here $\esssup_{t \in [0,1]} |f(t)|$ is the essential supremum of $f$,
i.e., the smallest value $\lambda \in \mathbb{R}$ such that $f(t) \leq \lambda$ for almost all $t\in [0,1]$.

\begin{lemma}\label{l:bigorlilp}
    For all $p\in [1,\infty]$,
    every graph is $(\ell_p, L_p[0,1])$-flattenable.
\end{lemma}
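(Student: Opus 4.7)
The plan is to handle the cases $p = \infty$ and $1 \leq p < \infty$ separately. For $p = \infty$, the pairwise $L_\infty$-distances among the functions $q_v$ make $V$ a finite metric space, so I would directly apply Fréchet's embedding theorem (\Cref{t:embed}) to isometrically embed this metric into $\ell_\infty$, which immediately yields the required realisation.

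For $1 \leq p < \infty$, my plan is to discretise Lebesgue measure on $[0,1]$ via a Tchakaloff-type theorem (a Carathéodory-style theorem for integrals of measurable functions, due to Richter). After fixing Borel representatives of each $q_v$ that are finite on a common conull subset of $[0,1]$, the $|E|$ functions $g_{vw}(t) := |q_v(t) - q_w(t)|^p$ lie in $L^1[0,1]$. The theorem then produces finitely many points $t_1, \ldots, t_k \in [0,1]$ (with $k \leq |E| + 1$) and positive weights $c_1, \ldots, c_k$ satisfying
\begin{equation*}
\sum_{i=1}^k c_i\, g_{vw}(t_i) \;=\; \int_0^1 g_{vw}(t)\, dt \;=\; \|q_v - q_w\|_{L_p[0,1]}^p \qquad \text{for every } vw \in E.
\end{equation*}
I would then define the realisation $x \in \ell_p^V$ by $x_v(i) := c_i^{1/p}\, q_v(t_i)$ for $i \leq k$ and $x_v(i) := 0$ otherwise; a direct computation gives $\|x_v - x_w\|_{\ell_p}^p = \|q_v - q_w\|_{L_p[0,1]}^p$, so $G$ is $(\ell_p, L_p[0,1])$-flattenable. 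In fact, this argument proves the sharper statement that $G$ is $(\ell_p^{|E|+1}, L_p[0,1])$-flattenable, which is why \Cref{l:dense} (which requires a finite-dimensional codomain) cannot be applied directly to bypass this step.

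The main obstacle will be invoking the correct form of the Tchakaloff/Richter theorem, since the classical Tchakaloff statement treats polynomial integrands on compact sets, while I need the extension covering arbitrary $L^1$-integrable measurable functions on a finite Borel measure space. A secondary technicality is to choose Borel representatives of each $q_v$ so that the points $t_i$ supplied by the theorem lie in the conull set on which every $q_v(t_i)$ is finite, which can be arranged by restricting the ambient measure space to this conull set before invoking the discretisation theorem.
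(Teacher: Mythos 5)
Your proposal is correct and follows essentially the same route as the paper, whose proof simply defers to Ball's Proposition 1 in \cite{ball90}: that argument is precisely the Carath\'{e}odory/Tchakaloff-style discretisation of the measure applied to the integrands $|q_v - q_w|^p$, yielding an atomic measure supported on at most $\binom{|V|}{2}$ points and hence a realisation in a finite-dimensional $\ell_p$-space, with the $p=\infty$ case handled by the Fr\'{e}chet embedding. Your write-up just makes explicit (via Richter's theorem and the choice of Borel representatives) the details that the paper leaves to the citation.
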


\begin{proof}
    This follows directly from the proof of \cite[Proposition 1]{ball90}.
\end{proof}

\begin{theorem}[\cite{herz}]\label{t:lessthan2}
    For all $1 \leq p \leq q \leq 2$,
    $L_q[0,1]$ isometrically embeds in $L_p[0,1]$.
\end{theorem}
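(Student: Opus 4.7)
The plan is to follow the classical construction via symmetric $q$-stable random variables (Bretagnolle--Dacunha-Castelle--Krivine). The case $p = q$ is the identity, so I fix $1 \leq p < q \leq 2$ and aim to produce a linear isometry $T : L_q[0,1] \to L_p[0,1]$.

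First I would invoke the existence of a symmetric $q$-stable random variable $Z$ with characteristic function $\mathbb{E}[e^{itZ}] = e^{-|t|^q}$. For $q = 2$ this is a centered Gaussian, while for $0 < q < 2$ the positive-definiteness of $t \mapsto e^{-|t|^q}$ follows from Schoenberg's theorem (equivalently, from the L\'evy--Khintchine representation of $q$-stable laws). The essential stability property, read off the characteristic function, is that if $Z_1, \dots, Z_n$ are i.i.d.\ copies of $Z$ and $a_1, \dots, a_n \in \mathbb{R}$, then $\sum_i a_i Z_i$ is equidistributed with $\bigl(\sum_i |a_i|^q\bigr)^{1/q} Z$. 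Because $p < q$, the heavy-tail estimates for $Z$ nevertheless give $c_{p,q} := \mathbb{E}|Z|^p \in (0, \infty)$.

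Next I would use $Z$ to build a $q$-stable noise $W$ on $([0,1], \lambda)$: a stochastic set function on Borel subsets of $[0,1]$ with $W(A)$ distributed as $\lambda(A)^{1/q} Z$, independent on disjoint sets, and finitely additive. For a simple function $f = \sum_i a_i \chi_{A_i}$ with the $A_i$ disjoint, I set $Tf := \sum_i a_i W(A_i)$; by the stability property $Tf$ is distributed as $\|f\|_{L_q[0,1]} \cdot Z$, so $\|Tf\|_{L_p(P)} = c_{p,q}^{1/p} \|f\|_{L_q[0,1]}$. Rescaling $W$ by the factor $c_{p,q}^{-1/p}$ turns $T$ into a linear isometry on the dense subspace of simple functions, and by continuity this extends to a linear isometry $T : L_q[0,1] \to L_p(\Omega, P)$. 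Since $W$ may be realised on a standard non-atomic probability space, and any separable $L_p$ over such a space is isometrically isomorphic to $L_p[0,1]$, composing with that isomorphism lands the embedding in the desired target.

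The main obstacle is the existence of symmetric $q$-stable distributions for $q \in (0, 2]$, i.e.\ the positive-definiteness of $e^{-|t|^q}$. This is the precise point at which the hypothesis $q \leq 2$ is needed, and for $q < 2$ it is not elementary: it rests on Schoenberg's theorem or on an explicit L\'evy representation. Once that foundational input is granted, the remainder of the argument (stability, $L_p$-moments, extension by density, and reparametrisation of the underlying probability space) is essentially bookkeeping; note that $p \geq 1$ plays no role beyond ensuring that we are dealing with genuine norms rather than quasi-norms.
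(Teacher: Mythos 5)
Your proposal is correct. The paper does not prove this statement at all --- it is quoted as a known theorem with a citation to the literature --- so there is no internal proof to compare against; your argument via symmetric $q$-stable random variables (existence from the positive-definiteness of $e^{-|t|^q}$ for $q\le 2$, the stability identity $\sum_i a_i Z_i \sim (\sum_i |a_i|^q)^{1/q}Z$, finiteness of $\mathbb{E}|Z|^p$ for $p<q$, and extension from simple functions by density into a separable non-atomic $L_p$) is precisely the classical route by which this embedding theorem is established, and you correctly isolate both where $q\le 2$ is used and why the case $p=q$ must be handled separately (the $p$-th moment of a $q$-stable law is infinite when $p=q<2$).
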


With this, we can state the following.

\begin{lemma}\label{l:pqembed1}
    For all $1 \leq p \leq q \leq 2$,
    every graph is $(\ell_p, \ell_q)$-flattenable.
\end{lemma}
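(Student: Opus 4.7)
The plan is to chain together several embeddings and flattenability statements, passing through the function spaces $L_p[0,1]$ and $L_q[0,1]$ and exploiting the basic composition rule for flattenability given in \Cref{p:xyzembed}.

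First, I will observe that $\ell_q$ isometrically embeds into $L_q[0,1]$. Explicitly, I will fix a partition of $(0,1]$ into countably many intervals $\{I_n\}_{n \in \mathbb{N}}$ of positive length and define
\begin{equation*}
    T : \ell_q \to L_q[0,1], \qquad T\bigl((a_n)_{n \in \mathbb{N}}\bigr) = \sum_{n \in \mathbb{N}} a_n |I_n|^{-1/q}\, \mathbf{1}_{I_n}.
\end{equation*}
A direct computation gives $\|T(a)\|_{L_q[0,1]}^q = \sum_n |a_n|^q |I_n|^{-1}\cdot |I_n| = \|a\|_q^q$, so $T$ is an isometric linear embedding. (Here $q \leq 2 < \infty$, so we avoid the essential-supremum case.) By \Cref{p:isomembed}, every graph is therefore $(L_q[0,1], \ell_q)$-flattenable.

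Next I will invoke \Cref{t:lessthan2}, which gives an isometric embedding $L_q[0,1] \hookrightarrow L_p[0,1]$ for $1 \leq p \leq q \leq 2$; by \Cref{p:isomembed}, every graph is $(L_p[0,1], L_q[0,1])$-flattenable. Combined with \Cref{l:bigorlilp}, which says every graph is $(\ell_p, L_p[0,1])$-flattenable, two applications of \Cref{p:xyzembed} yield that every graph is $(\ell_p, L_q[0,1])$-flattenable.

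Finally, chaining this with the $(L_q[0,1], \ell_q)$-flattenability obtained in the first step, one more application of \Cref{p:xyzembed} shows that every graph is $(\ell_p, \ell_q)$-flattenable, completing the argument. The only non-bookkeeping step is the isometric embedding $\ell_q \hookrightarrow L_q[0,1]$; everything else is a direct composition of cited results. There is no genuine obstacle to overcome, so I expect the proof to be extremely short.
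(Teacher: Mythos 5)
Your proposal is correct and follows essentially the same route as the paper: the same isometric embedding $\ell_q \hookrightarrow L_q[0,1]$ via scaled indicator functions on a countable interval partition (the paper uses the dyadic partition explicitly), followed by \Cref{t:lessthan2}, \Cref{l:bigorlilp}, and \Cref{p:xyzembed}. The only difference is bookkeeping — you compose the flattenability relations in three steps where the paper first composes the two isometric embeddings — so nothing further is needed.
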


\begin{proof}
    Define the linear map $T: \ell_q \rightarrow L_q[0,1]$ which maps $(x_1,x_2,\ldots)$ to the function $f:[0,1] \rightarrow \mathbb{R}$ where $f(x) = 2^{1/q} x_1$ for all $x \in [0,1/2]$, and $f(x) = 2^{n/q} x_n$ if $x \in ( \sum_{i=1}^{n-1} (1/2)^i, \sum_{i=1}^n (1/2)^i]$ for some $n \geq 2$.
    Since the map $T$ is an isometry, it follows that $\ell_q$ can be embedded isometrically into $L_q[0,1]$.
    Hence, by \Cref{t:lessthan2},
    every graph is $(L_p[0,1],\ell_q)$-flattenable.
    The result now follows from \Cref{p:xyzembed} and \Cref{l:bigorlilp}.
\end{proof}

Note that the proof of \Cref{l:pqembed1} fails for $q>2$, since $\ell_q^3$ cannot be embedded in $\ell_1$ when for $q>2$ (see \cite{dor}).
It is unknown to the authors if every graph is $(\ell_p,\ell_q)$-flattenable when $q > 2$ and $p < q$.

\begin{theorem}\label{t:pqembed2}
    Let $X$ be a normed space and $1 \leq p \leq q \leq 2$.
    If $G$ is $(X,\ell_p)$-flattenable, then it is $(X,\ell_q)$-flattenable.
    Hence, the set $\ell(G,X) \cap [1,2]$ is either empty or a closed interval containing $2$.
\end{theorem}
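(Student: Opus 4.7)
The plan is to chain together two earlier results. By Lemma~\ref{l:pqembed1}, every graph, and in particular $G$, is $(\ell_p,\ell_q)$-flattenable whenever $1 \leq p \leq q \leq 2$. Combining this with the hypothesis that $G$ is $(X,\ell_p)$-flattenable, transitivity of flattenability (Proposition~\ref{p:xyzembed}) applied to the triple $(X,\ell_p,\ell_q)$ immediately yields that $G$ is $(X,\ell_q)$-flattenable. This disposes of the first assertion; all of the substantive content, namely Herz's theorem and the isometric inclusion $\ell_q \hookrightarrow L_q[0,1]$, is already packaged inside Lemma~\ref{l:pqembed1}.

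For the structural claim, suppose $\ell(G,X) \cap [1,2]$ is non-empty and choose some $p_0$ in it. Setting $q = 2$ in the first assertion gives $2 \in \ell(G,X)$, and for arbitrary $q \in [p_0,2]$ it gives $[p_0,2] \subseteq \ell(G,X)$. Let $p^* := \inf\bigl(\ell(G,X) \cap [1,2]\bigr)$. The above shows that $\ell(G,X) \cap [1,2]$ equals either $[p^*,2]$ or $(p^*,2]$, and invoking the closedness of $\ell(G,X)$ from the previous proposition rules out the half-open alternative, so the intersection is the closed interval $[p^*,2]$.

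There is no serious obstacle here: the work has already been done in the preceding lemma and proposition. The only mildly delicate point is that the closedness of $\ell(G,X)$ was established under the assumption that $X$ is finite-dimensional, so the closed-interval conclusion is really a statement about finite-dimensional $X$; the first implication, by contrast, is valid without any dimensional restriction on $X$.
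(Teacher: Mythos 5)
Your proof is correct and is essentially the paper's own argument: the paper likewise derives the first assertion by combining Lemma~\ref{l:pqembed1} with the transitivity of flattenability from Proposition~\ref{p:xyzembed}, and leaves the closed-interval conclusion as an immediate consequence. Your extra remark that the closedness of $\ell(G,X)$ was only proved for finite-dimensional $X$ is a fair observation about the scope of the second assertion, but it does not change the method.
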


\begin{proof}
    This follows from \Cref{p:xyzembed} and \Cref{l:pqembed1}.
\end{proof}

While \Cref{t:pqembed2} guarantees that, so long as $1 \leq p \leq q \leq 2$,  $(X,\ell_p)$-flattenability implies $(X,\ell_q)$-flattenability,
the converse is not true.
For example, while $K_4$ is $(\ell_2^3,\ell_2)$-flattenable (\Cref{t:3dflat}),
it is not $(\ell_2^3,\ell_1)$-flattenable.
This latter statement will be proven later in \Cref{sec:normedplane} (see \Cref{t:strconv}).

\section{Characterising forbidden minors for flattenability}\label{sec:normedplane}

We shall now focus on the special case of $X$ being a normed plane and $Y$ being a normed space of strictly higher dimension. Our goal is to characterise $(X,Y)$-flattenability by identifying a finite list of forbidden minors, as suggested by the celebrated Robertson–Seymour theorem \cite{rs04}.
We do so with the following result.

\begin{theorem}\label{t:2dcombined}
    Let $X$ be a normed plane and $Y$ an infinite-dimensional normed space.
    Then a graph $G$ is $(X,Y)$-flattenable if and only if either:
    \begin{enumerate}
        \item $X$ is not isometrically isomorphic to $\ell_\infty^2$ and $G$ contains no $K_4$ minor.
        \item $X$ is isometrically isomorphic to $\ell_\infty^2$ and $G$ contains no $W_4$ or $K_4 +_e K_4$ minor (see \Cref{fig:w5join}).
    \end{enumerate}
\end{theorem}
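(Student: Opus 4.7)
The statement of \Cref{t:2dcombined} is a case analysis on whether $X$ is isometrically isomorphic to $\ell_\infty^2$, and the theorem itself is really a repackaging of two independent sub-theorems \Cref{t:2d1} and \Cref{t:l2tolinf}. My plan is therefore a single-line case split once those two sub-theorems are established, so I will outline how I would prove each sub-theorem.

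For the case $X \cong \ell_\infty^2$, I would aim to show that the class of $(X,Y)$-flattenable graphs coincides with the class of $(X,\ell_\infty)$-flattenable graphs, after which \Cref{t:infflat} identifies the forbidden minors as $W_4$ and $K_4 +_e K_4$. The inclusion $(X,\ell_\infty)$-flattenable $\subseteq$ $(X,Y)$-flattenable is immediate from \Cref{t:linfl2}\ref{t:linfl2item1}. For the reverse inclusion I would argue by minor-closedness that it suffices to witness that $W_4$ and $K_4 +_e K_4$ are themselves not $(X,Y)$-flattenable; for each of these two small graphs I would take an explicit edge-length vector certifying non-$(X,\ell_\infty)$-flattenability (from the proof of \Cref{t:infflat}) and then produce it in $Y$ using that $Y$ is infinite-dimensional, e.g.\ by isometrically embedding a finite affinely independent $\ell_2$-configuration into $Y$ via \Cref{t:shkarin}.

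For the case $X \not\cong \ell_\infty^2$, the $(\Leftarrow)$ direction relies on the characterisation of $K_4$-minor-free graphs as partial $2$-trees: every such graph has a vertex of degree at most $2$, so I can realise it in $X$ vertex-by-vertex, solving at each step a two-ball intersection problem which has a solution by the triangle inequality inherited from the target $Y$-realisation. The $(\Rightarrow)$ direction reduces, by minor-closedness, to proving that $K_4$ is not $(X,Y)$-flattenable. Here I would invoke \Cref{c:ind}: since $Y$ is infinite-dimensional and $X$ is finite-dimensional with a suitably smooth norm, $(X,Y)$-flattenability of $K_4$ would force independence of $K_4$ in $X$, i.e.\ the image $f_{K_4,X}(X^{V(K_4)})$ would contain a non-empty open subset of $\mathbb{R}^6$. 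I would then derive a contradiction by a dimension count: after placing $p_1,p_2,p_3$ using five edges, the fourth vertex $p_4$ is constrained by two ball intersections about $p_1,p_2$, so $\|p_3-p_4\|_X$ is locally a function of the other five distances and the image of $f_{K_4,X}$ has dimension at most $5$.

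The main obstacle is making this dimension-counting argument work uniformly for \emph{every} normed plane $X$ whose unit ball is not a parallelogram. For strictly convex $X$, pairs of balls intersect in finitely many points and the pinning of $p_4$ is immediate. The delicate sub-case is a polygonal (but non-$\ell_\infty^2$) unit ball, such as a regular hexagon, where two translated balls can meet along a one-dimensional segment and appear to grant extra freedom. The key geometric fact I expect to need is that parallelograms are the only $2$-dimensional unit balls whose pairs of translates can meet along two non-parallel segments simultaneously; making this precise—and checking that the mild smoothness hypothesis of \Cref{c:ind} holds for polygonal norms (the norm is smooth on the open dense complement of finitely many lines through the origin)—constitutes the main technical work I foresee.
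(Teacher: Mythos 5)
Your overall decomposition (reduce to \Cref{t:2d1} and \Cref{t:l2tolinf}, then case-split) matches the paper, and your $(\Leftarrow)$ directions are essentially the paper's: the vertex-by-vertex construction for $K_4$-minor-free graphs is \Cref{p:h1} plus \Cref{l:h1} (though note you must first complete $G$ to an edge-maximal $K_4$-minor-free graph so that each new degree-$2$ vertex is attached to an \emph{adjacent} pair --- otherwise the two prescribed radii need not satisfy the triangle inequality against $\|p_{v_1}-p_{v_2}\|_X$), and the $\ell_\infty^2$ converse is \Cref{t:infflat} plus \Cref{t:linfl2}\ref{t:linfl2item1}. However, both of your $(\Rightarrow)$ directions have genuine gaps.

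The serious one is the non-$\ell_\infty^2$ case. Your plan is to get a contradiction from \Cref{c:ind} by showing the image of $f_{K_4,X}$ has dimension at most $5$, via the heuristic that $p_4$ is pinned by two ball intersections so the sixth distance is a function of the other five. This dimension count is Euclidean intuition: it uses the rotation group to remove a degree of freedom. A generic normed plane has no rotations, so modulo translations a realisation of $K_4$ has $8-2=6$ degrees of freedom, matching $|E|=6$; and indeed $K_4$ \emph{is} independent in every non-Euclidean normed plane (its edge set decomposes into two spanning trees, which is the relevant sparsity count away from $\ell_2^2$), so $f_{K_4,X}(X^{V})$ has non-empty interior and no contradiction arises. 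The obstruction to flattenability is not dimensional but pointwise: the specific vector $(1,\dots,1)$ lies in $f_{K_4,Y}(Y^V)$ but not in $f_{K_4,X}(X^V)$. The paper gets this from Petty's theorem (\Cref{t:petty}): every normed space of dimension $\geq 3$ contains a $4$-point equilateral set, while a normed plane does if and only if it is isometric to $\ell_\infty^2$. (This also yields the stronger hypothesis $\dim Y\geq 3$ rather than $\dim Y=\infty$, and avoids the $C^\infty$-smoothness hypothesis of \Cref{c:ind}, which a general normed plane need not satisfy.) Your fallback observation about polygonal unit balls meeting along segments does not rescue the argument, since the failure already occurs for smooth strictly convex non-Euclidean planes.

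The second gap is in the $\ell_\infty^2$ case: to ``produce the certificate in $Y$'' via \Cref{t:shkarin} you need the fhjv2017 edge lengths for $W_4$ and $K_4+_eK_4$ to be induced by a configuration in $\ell_2$, but those certificates are a priori only realisations in $\ell_\infty$. Showing that the same edge lengths are $\ell_2$-realisable is precisely the nontrivial content of the paper's proof of \Cref{t:l2tolinf}: it exhibits explicit completions of the partial squared-distance matrices and verifies via Schoenberg's criterion (\Cref{t:schoen}) that they are Euclidean distance matrices. Without that step (or some replacement for it), your reverse inclusion is unproved.
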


The proof of \Cref{t:2dcombined} is split into two sub-cases;
either $X$ is not isometrically isomorphic to $\ell_\infty^2$ (\Cref{t:2d1}), or it is isometrically isomorphic to $\ell_\infty^2$ (\Cref{t:l2tolinf}).
In fact,
the former proof gives a slightly stronger result,
since it only requires that $\dim Y \geq 3$.

\subsection{Flattening the complete graph of size 4}\label{sec:clique4}

In \Cref{t:2dflat}, it was shown that a graph $G$ is $(\ell_2^2,\ell_2)$-flattenable if and only if it contains no $K_4$ minor. In this subsection, we extend this result to $(X,Y)$-flattenable graphs, where $X$ is a normed plane that is not isometrically isomorphic to $\ell_\infty^2$ and $\dim Y \geq 3$.

For the next result we define the following.
A subset $S$ of a normed space $X$ is called an \emph{equilateral set} if   $\|x-y\|=1$ for all $x,y\in S$.

\begin{theorem}[\cite{petty}]\label{t:petty}
Let $X$ be a normed space.
\begin{enumerate}
\item  If $\dim X=2$ then:
\begin{enumerate}
\item[(a)] the maximal size of an equilateral set in $X$ is $3$ if and only if $X$ is not isometrically isomorphic to $\ell_\infty^2$, and
\item[(b)] the maximal size of an equilateral set in $X$ is $4$ if and only if $X$ is isometrically isomorphic to $\ell_\infty^2$.
\end{enumerate}
\item If $\dim X=3$ then there exists an equilateral set in $X$ of size $4$.
\end{enumerate}
\end{theorem}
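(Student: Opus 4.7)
The plan is to tackle each assertion using continuity arguments on the unit sphere $S_X$ together with explicit geometric constructions.

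For the existence of an equilateral triangle in any normed plane (part 1(a), existence direction), I would invoke the classical fact that every centrally symmetric convex body in $\mathbb{R}^2$ admits an inscribed affine regular hexagon, i.e.\ there exist $x,y\in S_X$ with $x-y\in S_X$ as well. The three vectors $0,x,y$ then form an equilateral triangle of side $1$. Existence of such a hexagon follows from a continuity argument: one lets $x$ traverse $S_X$ and, for each $x$, picks a continuously varying $y\in S_X$ for which $\|x-y\|$ is initially less than and eventually greater than $1$, so the intermediate value theorem delivers the desired $y$.

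For part 1(b), the explicit set $\{(0,0),(1,0),(0,1),(1,1)\}\subset \ell_\infty^2$ is equilateral with all six pairwise distances equal to $1$. Conversely, suppose $\{0,x_1,x_2,x_3\}$ is an equilateral 4-set in a normed plane $X$. Then $x_1,x_2,x_3$ and the three differences $x_i-x_j$ all lie on $S_X$, and the vector identities $x_i=(x_i-x_j)+x_j$ together with the centrosymmetry of $S_X$ force a system of collinearity and parallelism constraints on these six unit vectors, pinning down $S_X$ as a parallelogram boundary; hence $X\cong \ell_\infty^2$. This is the technical heart of Petty's argument and involves a careful case analysis of the cyclic order of the six unit vectors on the Jordan curve $S_X$.

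For part 2, start with an equilateral triangle $\{a,b,c\}$ in any 2-dimensional subspace of $X$, produced by part 1 applied to that subspace. To find $x\in X$ with $\|x-a\|=\|x-b\|=\|x-c\|=1$, consider the three translated unit spheres $a+S_X$, $b+S_X$, $c+S_X$. Each is a topological 2-sphere in $X\cong \mathbb{R}^3$; the pairwise intersections are non-empty (they contain the third vertex of the triangle) and generically contain topological circles. A degree or Borsuk--Ulam argument on a continuous path in one such intersection circle, exploiting the extra dimension perpendicular to the affine plane of $\{a,b,c\}$, then forces a common point of all three spheres and yields the fourth vertex off that plane. The main obstacle I anticipate is the impossibility direction of part 1: extracting from purely metric data the conclusion that $S_X$ must be a parallelogram. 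The existence-type assertions (parts 1(a) existence, 1(b) witnessed by the explicit example, and 2) reduce fairly cleanly to IVT or degree arguments once the correct inscribed figures or parameterisations are in place.
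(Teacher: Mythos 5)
The paper does not prove this statement at all: it is imported verbatim from Petty's paper \cite{petty}, so there is no internal proof to measure you against. Judged on its own terms, your proposal establishes only the easy fragments and leaves the two load-bearing claims unproved. The existence of an equilateral triple in any normed plane (your IVT/affine-regular-hexagon argument) and the explicit equilateral $4$-set $\{(0,0),(1,0),(0,1),(1,1)\}$ in $\ell_\infty^2$ are both correct. But the forward direction of 1(a) --- that a normed plane admitting an equilateral $4$-set must be isometrically isomorphic to $\ell_\infty^2$ --- is exactly the content of Petty's theorem in dimension $2$, and you do not supply it: ``the vector identities \dots force a system of collinearity and parallelism constraints \dots pinning down $S_X$ as a parallelogram boundary'' names the desired conclusion rather than deriving it, as you yourself concede. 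You also never address why $\ell_\infty^2$ admits no equilateral $5$-set, which is needed for the phrase ``the maximal size \dots is $4$'' in 1(b).

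Part 2 has a concrete false step. The intersection of two translated unit spheres $a+S_X$ and $b+S_X$ with $\|a-b\|=1$ need not be a topological circle in a general $3$-dimensional normed space: if $S_X$ has flat faces (e.g.\ $X=\ell_\infty^3$), this intersection can be a $2$-dimensional set, and ``generically'' is of no help because you must handle every norm, not a generic one. Consequently the proposed ``path in one such intersection circle'' on which to run a degree or Borsuk--Ulam argument may not exist, and even when it does you give no map whose degree or antipodality is being exploited, so nothing forces the triple intersection to be non-empty, let alone to contain a point at distance exactly $1$ from all three vertices. Petty's actual argument for $\dim X=3$ is a genuinely topological one that has to be set up with care precisely because of such degenerate spheres; your sketch would need to be replaced by that argument (or a citation to it) to count as a proof.
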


\begin{lemma}\label{l:h1}
	Let $X,Y$ be normed spaces where $\dim X \geq 2$ and let $G$ be a $(X,Y)$-flattenable graph.
	Suppose $G'$ is formed from $G$ by adding a vertex $v_0$ and edges $v_0v_1,v_0v_2$, where $v_1,v_2 \in V$ are adjacent vertices in $G$.
	Then $G'$ is $(X,Y)$-flattenable also.
\end{lemma}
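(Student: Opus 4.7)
The plan is to lift any realisation of $G'$ in $Y$ to one in $X$ by first flattening $G$ and then placing the new vertex $v_0$ by hand. Given $q \in Y^{V(G')}$, restricting to $V = V(G)$ and applying the $(X,Y)$-flattenability of $G$ yields $p \in X^{V}$ with $f_{G,X}(p) = f_{G,Y}(q|_V)$. Write $a := \|q_{v_0}-q_{v_1}\|_Y$, $b := \|q_{v_0}-q_{v_2}\|_Y$, and $c := \|q_{v_1}-q_{v_2}\|_Y$; since $v_1v_2$ is already an edge of $G$, we also have $c = \|p_{v_1}-p_{v_2}\|_X$. The problem reduces to exhibiting $x \in X$ with $\|x-p_{v_1}\|_X = a$ and $\|x-p_{v_2}\|_X = b$, for then setting $p'_v := p_v$ for $v \in V$ and $p'_{v_0} := x$ gives a realisation $p' \in X^{V(G')}$ satisfying $f_{G',X}(p') = f_{G',Y}(q)$.

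To produce such an $x$ I would argue by the intermediate value theorem on the sphere $S := \{y \in X : \|y - p_{v_1}\|_X = a\}$. Because $\dim X \geq 2$, the radial map $y \mapsto (y-p_{v_1})/\|y-p_{v_1}\|_X$ is a homeomorphism from $S$ onto the unit sphere of $X$, a vector space of real dimension at least two, so $S$ is connected. The continuous function $h : S \to \mathbb{R}$, $y \mapsto \|y-p_{v_2}\|_X$, is bounded above by $a+c$ and below by $|a-c|$ via the triangle inequality. Assuming $c > 0$, both bounds are actually attained on $S$ at the two intersection points of $S$ with the line through $p_{v_1}$ and $p_{v_2}$, namely $p_{v_1} \pm a(p_{v_2}-p_{v_1})/c$ (a direct computation gives $h$-values $|a-c|$ and $a+c$). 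Since $a,b,c$ are the pairwise distances between $q_{v_0},q_{v_1},q_{v_2}$ in $Y$, they satisfy the triangle inequality, so $b \in [|a-c|, a+c]$, and the IVT supplies $x \in S$ with $h(x) = b$. In the degenerate case $c = 0$, the triangle inequality in $Y$ forces $a = b$, and any element of $S$ works (with the convention that $S = \{p_{v_1}\}$ if additionally $a = 0$).

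I expect the main subtlety to be the connectedness of $S$ together with the identification of the correct range of $h$; once the straight-line parametrisation is fixed, the computation of the extreme values is a one-line triangle inequality argument. Notably, neither smoothness of the norm nor any assumption on $Y$ beyond its being a normed space is required, and the hypothesis $\dim X \geq 2$ enters only through the connectedness of the sphere.
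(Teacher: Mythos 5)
Your proposal is correct and follows essentially the same route as the paper: both flatten $G$ first and then place $v_0$ by an intermediate-value/connectedness argument on the sphere of radius $a$ about $p_{v_1}$, using the two points on the line through $p_{v_1}$ and $p_{v_2}$ where the distance to $p_{v_2}$ attains the extreme values $|a-c|$ and $a+c$. Your write-up is marginally more explicit about the connectedness of the sphere and the degenerate case $c=0$, which the paper's ``traverse the boundary'' phrasing glosses over.
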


\begin{proof}
	%Let $v_0$ be the vertex added to $G$ to form $G'$, and let $\{v_1,v_2\}$ be the neighbourhood of $v_0$.
	Choose any $q' \in Y^{V \cup \{v_0\}}$.
	By \Cref{p:isomembed} we may assume that $\dim Y \geq 2$.
	Define $q \in Y^V$ to be the point with $q_v :=q'_v$ for all $v \in V$.
	As $G$ is $(X,Y)$-flattenable then there exists $p \in X^V$ where $f_{G,X}(p)=f_{G,Y}(q)$.
	Define $d_i := \|q'_{v_i}-q'_{v_0}\|_Y$ for each $i \in \{1,2\}$.
	As $\|q'_{v_1}-q'_{v_2}\|_Y = \|p_{v_1}- p_{v_2}\|_X$
	then,
	\begin{equation}\label{trianen}
		d_1 + d_2 \geq \|p_{v_1}- p_{v_2}\|_X, \quad d_1 + \|p_{v_1}- p_{v_2}\|_X \geq d_2, \quad d_2 + \|p_{v_1}- p_{v_2}\|_X \geq d_1.
	\end{equation}
	Define for each $i \in \{1,2\}$ the sets
	\begin{align*}
		D_i := \{ x \in X : \|x - p_{v_i}\|_X \leq d_{i} \}\quad \text{ and }\quad S_i := \{ x \in X : \|x - p_{v_i}\|_X = d_{i} \}.
	\end{align*}
	As \cref{trianen} holds, the set $D_1 \cap D_2$ is non-empty but does not contain $D_1$ or $D_2$.
    It follows that the set $S_1 \cap S_2$, is non-empty;
    this can be seen by traversing the boundary of $S_1$ from the point $p_{v_1} + \frac{d_1}{\|p_{v_1}-p_{v_2}\|}(p_{v_1}-p_{v_2})$ (which is not contained in $D_2$) to the point $p_{v_1} + \frac{d_1}{\|p_{v_1}-p_{v_2}\|}(p_{v_2}-p_{v_1})$ (which is contained in $D_2$) and noting that the set $S_2$ must be intersected at some point during the path.
    Hence there exists $x \in X$ where $\|p_{v_i}-x\|_X = d_i$ for each $i \in \{1,2\}$.
	If we set $p' \in X^{V \cup \{v_0\}}$ to be the realisation where $p'_v =p_v$ for all $v \in V$ and $p'_{v_0} =x$,
	then $f_{G',X}(p')=f_{G',Y}(q')$ as required.
\end{proof}

\begin{proposition}[{\cite[Proposition 7.3.1]{diestel}}]\label{p:h1}
	For any graph $G=(V,E)$,
	the following are equivalent:
	\begin{enumerate}
		\item $G$ does not contain $K_4$ as a minor,
		and $G +vw$ does contain $K_4$ as a minor for every distinct pair $v,w\in V$ where $vw \notin E$.
		\item $G$ can be formed from $K_2$ by a sequence moves where we add a vertex connected to pairs of adjacent vertices.
	\end{enumerate}
\end{proposition}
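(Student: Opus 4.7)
The plan is to prove the two implications by induction on $|V|$, with base case $G \cong K_2$ handled trivially: $K_2$ has no $K_4$ minor, the edge-maximality condition is vacuous (no non-edges exist), and $K_2$ is the starting point of the construction in~(2).

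For $(2) \Rightarrow (1)$, suppose the implication holds for $G$ and let $G'$ arise from $G$ by adjoining a vertex $v_0$ adjacent to endpoints $v_1, v_2$ of an edge of $G$. To show $G'$ has no $K_4$ minor, I would argue by contradiction: given branch sets $B_1, \ldots, B_4$ realising such a minor, if $v_0$ is unused the inductive hypothesis is violated directly; if $v_0 \in B_i$, then since $v_0$ has degree $2$ in $G'$ with both neighbors joined by the edge $v_1 v_2 \in E(G)$, I would check by case analysis on which of $v_1, v_2$ lie in $B_i$ that $B_i$ can be replaced by a connected subset of $V(G)$, using $v_1 v_2$ to maintain connectivity when needed, yielding a $K_4$ minor already in $G$. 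For edge-maximality, consider a non-edge $xy$ of $G'$: if $x, y \in V(G)$ then $G + xy \subseteq G' + xy$ has a $K_4$ minor by the inductive hypothesis; if $x = v_0$ and $y \in V(G) \setminus \{v_1, v_2\}$, contracting the edge $v_0 v_1$ in $G' + v_0 y$ produces a graph containing $G$ together with a new edge between $v_1$ and $y$, and the inductive hypothesis yields a $K_4$ minor that lifts back to $G' + v_0 y$.

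For $(1) \Rightarrow (2)$, I would again induct on $|V|$. For $|V| \geq 3$ the goal is to locate a vertex $v_0$ of degree $2$ whose two neighbors are adjacent; then $G - v_0$ still satisfies~(1) (connectedness and $K_4$-minor-freeness are preserved, and edge-maximality of $G - v_0$ follows by lifting witnessing minors through $v_0$), the inductive hypothesis applies, and one recovers $G$ by a single additional construction step. The existence of such a vertex rests on two facts: (i) $G$ is $2$-connected, since a cut vertex would allow an extra edge across blocks without creating a $K_4$ minor, contradicting edge-maximality; and (ii) every $2$-connected $K_4$-minor-free graph on at least three vertices contains a vertex of degree $2$. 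Once such a $v_0$ with neighbors $v_1, v_2$ is located, edge-maximality forces $v_1 v_2 \in E(G)$, for otherwise any putative $K_4$ minor in $G + v_1 v_2$ could be rerouted to avoid $v_0$, contradicting $K_4$-minor-freeness of $G$.

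The main obstacle is fact~(ii): any $2$-connected graph with no $K_4$ minor and at least three vertices has a vertex of degree $2$. I would approach this through an open ear decomposition: a $2$-connected graph decomposes into a cycle $C$ together with successive ears, and an induction on this decomposition shows that attaching an ear of length at least two to a subgraph already containing a cycle produces three internally disjoint paths between the two attachment vertices, which together with the existing cycle assemble a $K_4$ minor. Hence every ear after the initial cycle must be a single chord, and a short combinatorial argument---tracking which vertices of $C$ are ever used as endpoints of subsequently added chords---exhibits a vertex of degree exactly $2$. This structural analysis, which is essentially the classical characterisation of series--parallel graphs, is the delicate combinatorial heart of the proposition and would be the step requiring the most careful bookkeeping.
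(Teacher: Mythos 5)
The paper offers no proof of this proposition; it is quoted from Diestel, where the argument decomposes an edge-maximal $K_4$-minor-free graph along $2$-separations (clique-sums over an edge) and handles the $3$-connected case by Dirac's theorem. Your alternative strategy --- locate a degree-$2$ vertex with adjacent neighbours, delete it, and induct --- is viable in principle, and your direction $(2)\Rightarrow(1)$ is essentially sound. One small omission there: in the edge-maximality step for a non-edge $v_0y$ you implicitly assume $v_1y\notin E(G)$ so that the inductive hypothesis applies to $G+v_1y$; if both $v_1y$ and $v_2y$ already lie in $E(G)$ you must instead note that $\{v_0,v_1,v_2,y\}$ spans a $K_4$ subgraph of $G'+v_0y$ outright.

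The genuine gap is in your proof of fact~(ii). The claim that attaching an ear of length at least two to a subgraph already containing a cycle produces a $K_4$ minor is false: attaching an ear of length $2$ to a $4$-cycle at two antipodal vertices yields $K_{2,3}$, which is series--parallel and has no $K_4$ minor. Three internally disjoint $a$--$b$ paths form only a theta graph (a subdivision of $K_{2,3}$); to assemble a subdivision of $K_4$ one needs, in addition, a path joining two of those three paths internally while avoiding $a$ and $b$, and the ``existing cycle'' supplies no such path, since it consists precisely of two of the three paths. Consequently your structural conclusion that every ear after the initial cycle is a single chord is false (again, $K_{2,3}$ is a counterexample), and the bookkeeping built on it does not get off the ground. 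The fact you actually need --- every $2$-connected $K_4$-minor-free graph on at least three vertices has a vertex of degree exactly $2$ --- is true, but the clean route to it is Dirac's theorem that every graph of minimum degree at least $3$ contains a subdivision of $K_4$ (which, since $\Delta(K_4)=3$, is equivalent to a $K_4$ minor); substituting that for your ear-decomposition argument, or reverting to Diestel's $2$-separation decomposition, repairs the proof.
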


\begin{lemma}\label{l:2d2}
	Let $X,Y$ be normed spaces where $\dim X \geq 2$.
	If $G$ contains no $K_4$ minor then $G$ is $(X,Y)$-flattenable.
\end{lemma}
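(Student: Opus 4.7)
The plan is to combine Lemma \ref{l:h1} with the structural description in Proposition \ref{p:h1}, using a straightforward induction on the number of vertices together with the fact that $(X,Y)$-flattenability is closed under taking subgraphs (since subgraphs are minors).

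First I would handle the base case. Observe that $K_2$ is $(X,Y)$-flattenable for any normed spaces $X,Y$ of positive dimension: given any realisation $q$ of $K_2$ in $Y$ with edge length $d = \|q_{v_1} - q_{v_2}\|_Y$, we may pick any unit vector $u \in X$ and set $p_{v_1} = 0$, $p_{v_2} = du$, so that $\|p_{v_1} - p_{v_2}\|_X = d$. (One could also cite \Cref{p:1dflat}.)

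Next, given an arbitrary graph $G$ with no $K_4$ minor, I would pass to an edge-maximal extension: since having no $K_4$ minor is preserved by taking subgraphs and the class of $K_4$-minor-free graphs on the vertex set $V(G)$ is finite, we can add edges one at a time to $G$, as long as we preserve the property of having no $K_4$ minor, until we reach a graph $G^*$ on $V(G)$ with no $K_4$ minor such that $G^* + vw$ contains a $K_4$ minor for every non-edge $vw$. By \Cref{p:h1}, $G^*$ can be constructed from $K_2$ by a finite sequence of moves, each of which adds a vertex adjacent to two already-adjacent vertices.

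Induction on the number of such moves, using \Cref{l:h1} at each step, shows that $G^*$ is $(X,Y)$-flattenable: the hypothesis $\dim X \geq 2$ is exactly what is required to invoke \Cref{l:h1}. Finally, since $G$ is a subgraph of $G^*$, and hence a minor of $G^*$, the fact that $(X,Y)$-flattenability is minor-closed implies that $G$ itself is $(X,Y)$-flattenable.

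The argument is mainly bookkeeping; the substantive geometric content was already done in \Cref{l:h1}, whose proof relied on a connectedness/boundary argument to locate the third point in $X$ realising two prescribed distances. So there is no serious obstacle at this stage, beyond noting that the edge-maximal extension $G^*$ really exists and that Proposition \ref{p:h1} applies to it.
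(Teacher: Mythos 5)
Your proposal is correct and follows essentially the same route as the paper: reduce to an edge-maximal $K_4$-minor-free supergraph via minor-closedness, then build it from $K_2$ using \Cref{p:h1} and repeated applications of \Cref{l:h1}. The paper's proof is just a more compressed version of the same argument.
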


\begin{proof}	
	Since $(X,Y)$-flattenability is a minor-closed property,
	we may assume that $G$ is maximal in the set of $K_4$ minor-free graphs, i.e., for any $e \notin E$, the graph $G +e$ contains a copy of $K_4$ as a minor.
	By \Cref{p:h1} and \Cref{l:h1},
	$G$ is $(X,Y)$-flattenable,
	as $K_2$ is clearly $(X,Y)$-flattenable.
\end{proof}

With this, we can now prove our first key result of the section.

\begin{theorem}\label{t:2d1}
	Let $X$ be a normed plane not isometrically isomorphic to $\ell_\infty^2$,
	and let $Y$ be a normed space with $\dim Y \geq 3$.
	Then $G$ is $(X, Y)$-flattenable if and only if $G$ contains no $K_4$ minor.
\end{theorem}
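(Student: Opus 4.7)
The plan is to prove the two directions separately, with one direction being an immediate corollary of \Cref{l:2d2} and the other reducing, via minor-closedness, to showing that $K_4$ itself is not $(X,Y)$-flattenable.

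For the easy direction, if $G$ contains no $K_4$ minor, then since $\dim X \geq 2$, \Cref{l:2d2} directly gives that $G$ is $(X,Y)$-flattenable. This requires no work beyond citation.

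For the hard direction, suppose $G$ contains a $K_4$ minor. Since $(X,Y)$-flattenability is minor-closed (as noted in the introduction), it suffices to show that $K_4$ is not $(X,Y)$-flattenable. The strategy is to exploit the gap in maximum equilateral set sizes between $X$ and $Y$ provided by \Cref{t:petty}. First I would pick any $3$-dimensional subspace $Z \subseteq Y$, which exists since $\dim Y \geq 3$. By \Cref{t:petty}(2), $Z$ contains an equilateral set $\{q_1,q_2,q_3,q_4\}$ of size $4$, so that $\|q_i - q_j\|_Y = 1$ for all $i \neq j$. Viewing this as a realisation $q \in Y^{V(K_4)}$ of $K_4$, the induced edge length vector is $f_{K_4,Y}(q) = (1,1,1,1,1,1)$.

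Next I would argue that no realisation $p \in X^{V(K_4)}$ can induce the same edge length vector. Indeed, any such $p$ would yield an equilateral set $\{p_1,p_2,p_3,p_4\} \subset X$ of size $4$ (the points must be distinct since all pairwise distances equal $1 \neq 0$). However, since $X$ is a normed plane not isometrically isomorphic to $\ell_\infty^2$, \Cref{t:petty}(1)(a) asserts that the maximal size of an equilateral set in $X$ is $3$, a contradiction. Therefore $f_{K_4,Y}(q) \notin f_{K_4,X}(X^{V(K_4)})$, so $K_4$ is not $(X,Y)$-flattenable, and by minor-closedness neither is $G$.

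There is no significant obstacle here: the key technical input (\Cref{t:petty}) cleanly separates the two-dimensional case from the higher-dimensional case precisely along the $\ell_\infty^2$ boundary, and the argument amounts to packaging this as a statement about the measurement map. The only minor point to be careful about is ensuring that $Y$ admits a $3$-dimensional subspace on which \Cref{t:petty}(2) can be applied, which is immediate from the hypothesis $\dim Y \geq 3$.
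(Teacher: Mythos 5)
Your proposal is correct and follows essentially the same route as the paper: the converse direction is cited from \Cref{l:2d2}, and the forward direction reduces to $K_4$ by minor-closedness and then plays the size-$4$ equilateral set guaranteed in $Y$ by \Cref{t:petty}(2) against the size-$3$ bound in $X$ from \Cref{t:petty}(1)(a). Your explicit passage to a $3$-dimensional subspace of $Y$ is a small point the paper leaves implicit, but the argument is the same.
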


\begin{proof}	
    Suppose $G$ is $(X,Y)$-flattenable and contains $K_4$ as a minor.
	Since $(X,Y)$-flattenability is a minor-closed property,
	it suffices to assume that $G=K_4$.
	By \Cref{t:petty}, there exists $q \in Y^V$ where $f_{G,Y}(q) = (1,1,1,1)$.
	As $G$ is $(X,Y)$-flattenable then there exists $x_1, x_2, x_3, x_4 \in X$ so that if we set $p_{v_i}=x_i$ for each $i \in \{1,2,3,4\}$ then $f_{G,X}(p)=(1,1,1,1)$.
	However this contradicts \Cref{t:petty}.
 
 The converse follows from \Cref{l:2d2}.
\end{proof}

\Cref{t:2d1} can be extended to higher dimensional normed spaces in certain specific cases.
For next result we first recall that a normed space $X$ is \emph{strictly convex} if for all points $x ,y \in X$ with $\|x\|=\|y\|=1$,
we have $\|x+y\|=2$ if and only if $x=y$.

\begin{theorem}\label{t:strconv}
	Let $X$ be a strictly convex normed space with $\dim X \geq 2$ and let $Y$ be a normed space that is not strictly convex.
	Then a graph $G$ is $(X,Y)$-flattenable if and only if it contains no $K_4$ minor.
\end{theorem}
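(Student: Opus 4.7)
The plan is to prove both implications separately. The ``if'' direction---that a graph with no $K_4$ minor is $(X,Y)$-flattenable---follows immediately from \Cref{l:2d2}, since the hypothesis $\dim X \geq 2$ is already in force there. So the substantive content lies in the ``only if'' direction.

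For the converse, I would use minor-closedness of $(X,Y)$-flattenability to reduce to proving that $K_4$ itself is not $(X,Y)$-flattenable. The approach is to exhibit a realisation of $K_4$ in $Y$ whose vector of edge lengths is not in $f_{K_4,X}(X^{V(K_4)})$. Failure of strict convexity of $Y$ supplies unit vectors $u,v \in Y$ with $u \neq v$ and $\|u+v\|_Y = 2$, and I would work with the four points $p_1 = 0$, $p_2 = u$, $p_3 = v$, $p_4 = u+v$. The edges $p_1 p_2, p_2 p_4, p_1 p_3, p_3 p_4$ all have length $1$, the diagonal $p_1 p_4$ has length $2$, and the diagonal $p_2 p_3$ has length $\|u-v\|_Y$, which is strictly positive because $u \neq v$.

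The obstruction in $X$ is the standard rigidity consequence of strict convexity: whenever $\|a-c\| = \|a-b\| + \|b-c\|$ in a strictly convex space, $b$ must lie on the segment $[a,c]$. This is obtained by normalising $b-a$ and $c-b$ and applying the definition of strict convexity on the unit sphere. Applied to any candidate realisation $q_1,\dots,q_4 \in X$ of the above edge lengths, the two triangle equalities along the paths $q_1 q_2 q_4$ and $q_1 q_3 q_4$ (each summing to $2$) force both $q_2$ and $q_3$ to be the midpoint of $[q_1, q_4]$; hence $q_2 = q_3$, contradicting $\|q_2-q_3\|_X = \|u-v\|_Y > 0$.

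I do not foresee any serious obstacle. The core idea is to package the non-strict convexity of $Y$ into a single $K_4$-configuration with two overlapping triangle equalities, which is then defeated by the collinearity-forcing rigidity of strict convexity in $X$. The only detail requiring explicit verification is the midpoint conclusion from strict convexity, which is routine. Note that no dimension restriction on $Y$ beyond the tacit $\dim Y \geq 2$ (implied by $Y$ being non-strictly convex) is needed, and the argument is insensitive to $\dim X$ so long as $\dim X \geq 2$.
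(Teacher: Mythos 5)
Your proposal is correct and follows essentially the same route as the paper: the ``if'' direction via \Cref{l:2d2}, reduction to $K_4$ by minor-closedness, and the realisation $0, u, v, u+v$ in $Y$ built from a witness to the failure of strict convexity. The only difference is that you spell out the collinearity/midpoint argument that strict convexity of $X$ forbids such edge lengths, a step the paper leaves implicit; your verification of it is accurate.
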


\begin{proof}
	By \Cref{l:2d2},
	if $G$ contains no $K_4$ minor then it is $(X,Y)$-flattenable.
	Suppose $G$ contains $K_4$ as a minor.
	Since $(X,Y)$-flattenability is a minor-closed property,
	it suffices to assume that $G=K_4$.
	As $Y$ is not strictly convex,
	there exists $x,y \in Y$ where $\|x\|=\|y\|=1$, $x \neq y$ and $\|x +y\| = 2$.
	Let $q$ be the realisation of $K_4$ in $Y$ with
	\begin{align*}
		q_{v_1} = 0, \quad q_{v_2} = x, \quad q_{v_3} = y, \quad q_{v_4} = x+y.
	\end{align*}
	As $X$ is strictly convex then no such $p \in X^V$ exists with $f_{G,X}(p)=f_{G,Y}(q)$,
	hence $G$ is not $(X,Y)$-flattenable as required.
\end{proof}

\begin{example}\label{ex:lp}
    By \Cref{t:strconv}, the $(\ell_2^2, \ell_\infty^2)$-flattenable graphs are exactly those that contain no $K_4$ minor.
    \Cref{t:strconv} can also be applied to higher dimensions for some interesting results.
    The graph $K_4$ is $(\ell_2^3,\ell_2)$-flattenable by \Cref{t:3dflat},
    but is not $(\ell_2^3,\ell_\infty)$-flattenable by \Cref{t:strconv}.
    Hence the set of $(\ell_2^3,\ell_2)$-flattenable graphs is not equal to the set of $(\ell_2^3,\ell_\infty)$-flattenable graphs.
    As the forbidden minors of both $(\ell_2^3,\ell_2)$- and $(\ell_2^3,\ell_\infty)$-flattenability are known,
    we can then use \Cref{t:linfl2} to give ``upper and lower bounds'' on the $(\ell_2^3,Y)$-flattenable graphs when $\dim Y = \infty$.
\end{example}

\subsection{Flattening into \texorpdfstring{$\ell_\infty^2$}{2D L-infinity}}

Let $M$ be a real $n \times n$ symmetric matrix.
We say that $M$ is a \emph{Euclidean distance matrix} if there exists a map $p :\{ 1,\ldots,n\} \rightarrow \ell_2$ where $\|p_i - p_j\|^2_2 = M_{i,j}$ for all $i, j \in \{ 1,\ldots,n\}$.

\begin{theorem}[{\cite[Theorem 1]{schoenberg1935remarks}}]\label{t:schoen}
    Let $M$ be a real $n \times n$ symmetric matrix with $M_{ii} = 0$ for all $i \in \{1,\ldots,n\}$.
    Define $A$ to be the $(n-1)\times (n-1)$ symmetric matrix with $A_{ij} = M_{in} + M_{jn} - M_{ij}$ for all $i,j \in \{1,\ldots,n-1\}$.
    Then $M$ is a Euclidean distance matrix if and only if $A$ is positive semidefinite.
\end{theorem}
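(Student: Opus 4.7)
The plan is to reduce the statement to the classical characterisation of Gram matrices as positive semidefinite matrices, by recognising $\tfrac{1}{2}A$ as the Gram matrix of the translated points $p_i - p_n$.

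For the forward direction, I would assume $p_1, \ldots, p_n \in \ell_2$ realise $M$ and compute directly. Using the polarisation-type identity
\begin{align*}
    M_{in} + M_{jn} - M_{ij} &= \|p_i - p_n\|^2 + \|p_j - p_n\|^2 - \|p_i - p_j\|^2 \\
    &= 2\langle p_i - p_n,\, p_j - p_n\rangle,
\end{align*}
I get $A = 2G$ where $G$ is the Gram matrix of the family $p_1 - p_n, \ldots, p_{n-1} - p_n$. Every Gram matrix of vectors in a real Hilbert space is positive semidefinite, so $A$ is too.

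For the converse, the plan is to invert this construction. Since $A$ is symmetric (immediate from the symmetry of $M$ together with the defining formula), the spectral theorem yields a factorisation $\tfrac{1}{2}A = Q^\top Q$ for some real $(n-1)\times(n-1)$ matrix $Q$. Writing $q_1, \ldots, q_{n-1}$ for the columns of $Q$ (viewed as elements of $\ell_2$ after extending by zeros), these vectors satisfy $\langle q_i, q_j\rangle = \tfrac{1}{2} A_{ij}$. I would then define the realisation by $p_n := 0$ and $p_i := q_i$ for $i < n$.

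The remaining task is a short case analysis to check $\|p_i - p_j\|^2 = M_{ij}$ for all $i,j$. When $i=j=n$ this is simply $M_{nn}=0$. When $i<n$ and $j=n$, the hypothesis $M_{ii}=0$ gives $A_{ii} = 2 M_{in}$, and hence $\|q_i\|^2 = M_{in}$. When both $i,j<n$, expanding $\|q_i-q_j\|^2 = \|q_i\|^2 + \|q_j\|^2 - 2\langle q_i,q_j\rangle$ and substituting the definition of $A_{ij}$ collapses the expression to $M_{ij}$. I do not expect a serious obstacle; the entire argument is a bookkeeping exercise once $A = 2G$ is recognised, with the only delicate point being to notice that the diagonal vanishing of $M$ is exactly what makes the translate-by-$p_n$ operation reversible.
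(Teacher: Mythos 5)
The paper does not prove this statement; it is imported verbatim as Schoenberg's classical theorem (cited to \cite{schoenberg1935remarks}), so there is no internal proof to compare against. Your argument is the standard and correct proof: the identity $A_{ij}=2\langle p_i-p_n,\,p_j-p_n\rangle$ gives the forward direction via positive semidefiniteness of Gram matrices, and the factorisation $\tfrac12 A=Q^{\top}Q$ inverts the construction, with the hypothesis $M_{ii}=0$ used exactly where you say (to get $\|q_i\|^2=\tfrac12 A_{ii}=M_{in}$); the remaining verifications collapse algebraically as claimed, so the proposal is complete.
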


% \begin{remark}\label{rem:sdp}
%     \Cref{t:schoen} presents a semidefinite program for solving the following problem: Given a partial  matrix -- a real-valued indexed tuple $\widetilde{M} = (m_{ij})_{(i,j) \in E}$ for some indexing set $E \subset \{(i,j) : 1 \leq i \leq j \leq n\}$ -- does there exist an $n \times n$ Euclidean distance matrix $M$ where $M_{ij} = m_{ij}$ for all $(i,j)\in E$?
%     We begin by fixing $\mathbb{S}^k$ to be the linear space of $k \times k$ real symmetric matrices and defining the linear spaces
%     \begin{align*}
%         \mathcal{M} &:= \left\{ M \in \mathbb{S}^n : M_{ij} = m_{ij} \text{ for all } (i,j)\in E \right\}, \\ \mathcal{A} &:= \left\{ A \in \mathbb{S}^{n-1} : A_{ij} = M_{in} + M_{jn} - M_{ij} \text{ for some } M \in \mathcal{M} \right\}.
%     \end{align*}
%     With this, the partial matrix $\widetilde{M}$ can be completed to a Euclidean distance  matrix $M$ if and only if there exists a solution $A^*$ to the following semidefinite program:
%     \begin{mini*}|l|
%         {A \in \mathbb{S}^{n-1}}{0}{}{}
%         \addConstraint{A \in \mathcal{A}, ~ A \text{ positive semidefinite}}
%     \end{mini*}
%     Note that there are no positive semidefinite matrices in $\mathcal{A}$ if and only if the solution to the above semidefinite program is $\infty$.
%     Since the natural map from $\mathcal{M}$ to $\mathcal{A}$ is a linear surjection,
%     any solution $A^*$ can be easily converted back into some Euclidean distance matrix $M$ which is a completion of $\widetilde{M}$.
% \end{remark}

For the next result we recall that a \emph{(symmetric) partial matrix} is a real-valued indexed tuple $\widetilde{M} = (m_{ij})_{(i,j) \in E}$ for some indexing set $E \subset \{(i,j) : 1 \leq i \leq j \leq n\}$.
A \emph{completion} of a partial matrix $\widetilde{M}$ is a symmetric matrix $M$ where $M_{ij}=m_{ij}$ for each $(i,j) \in E$.

\begin{theorem}\label{t:l2tolinf}
    Let $X$ be a normed plane isometrically isomorphic to $\ell_\infty^2$,
    and let $Y$ be an infinite-dimensional normed space.
    Then a graph is $(X,Y)$-flattenable if and only if it contains no $W_4$ or $K_4+_e K_4$ minor (see \Cref{fig:w5join}).
\end{theorem}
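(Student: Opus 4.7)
Suppose $G$ contains neither $W_4$ nor $K_4 +_e K_4$ as a minor. By \Cref{t:infflat}, $G$ is $(\ell_\infty^2, \ell_\infty)$-flattenable. Since $X$ is isometrically isomorphic to $\ell_\infty^2$, \Cref{p:isom}\ref{p:isom1} yields $(X, \ell_\infty)$-flattenability. Finally, \Cref{t:linfl2}\ref{t:linfl2item1} upgrades this to $(X, Y)$-flattenability for any normed space $Y$, completing this direction.

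\textbf{Forward direction (forbidden minor implies not flattenable).} Because $(X, Y)$-flattenability is preserved under taking minors, it suffices to rule out $G \in \{W_4, K_4 +_e K_4\}$ from being $(X, Y)$-flattenable. The contrapositive of \Cref{t:linfl2}\ref{t:linfl2item2} reduces this further to showing that neither graph is $(X, \ell_2)$-flattenable. Thus, for each such $G$ we must exhibit a realisation $p \in (\ell_2^n)^V$ (for some small $n$) whose induced edge-length vector $f_{G, \ell_2}(p)$ does not lie in $f_{G, X}(X^V)$; $\ell_2$-realisability of the chosen vector is guaranteed by Schoenberg's criterion (\Cref{t:schoen}).

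\textbf{Checking the $\ell_\infty^2$-obstruction.} A realisation in $\ell_\infty^2$ is equivalent to a pair of one-dimensional realisations of $G$ (one in each coordinate) subject to the condition that, for every edge $vw$, the larger of the two coordinate-wise absolute differences equals the prescribed length $d_{vw}$. We enumerate the finitely many ``tightness patterns'' (each edge assigned the coordinate realising its max) and show that the cyclic structure of $W_4$ (respectively, the two glued $K_4$'s of $K_4 +_e K_4$), together with our chosen edge-length data, forces an incompatibility for every pattern: the $1$-dimensional realisations determined by the tight edges either violate some slack inequality on another edge or cannot coexist simultaneously as a pair of valid $1$-D placements.

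\textbf{Main obstacle.} The delicate part is the explicit choice of bad realisation. Many symmetric $\ell_2$ realisations of $W_4$ (for example $v_0 = 0$ with the outer vertices on an orthonormal basis of $\ell_2^4$, giving spokes $1$ and outer edges $\sqrt{2}$) turn out to be $\ell_\infty^2$-realisable, so one must select asymmetric edge lengths that rule out every coordinate-decomposition. For $K_4 +_e K_4$, the analogous construction is constrained by Petty's \Cref{t:petty}, which pins down the size-$4$ equilateral sets in $\ell_\infty^2$ essentially uniquely and thereby restricts how two copies of $K_4$ can be simultaneously realised; this rigidity is what allows the two $K_4$ ``halves'' to be forced into an incompatible configuration.
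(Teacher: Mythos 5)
Your high-level architecture coincides with the paper's: the reverse direction via \Cref{t:infflat}, \Cref{p:isom} and \Cref{t:linfl2}\ref{t:linfl2item1} is complete and is exactly the paper's argument, and reducing the forward direction (via minor-closure and the contrapositive of \Cref{t:linfl2}\ref{t:linfl2item2}) to showing that $W_4$ and $K_4+_e K_4$ are not $(\ell_\infty^2,\ell_2)$-flattenable is also the paper's route, as is the plan to certify $\ell_2$-realisability with Schoenberg's criterion (\Cref{t:schoen}).

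The gap is that the forward direction is never actually finished. The entire content of the hard direction is the existence of a concrete edge-length vector for each of $W_4$ and $K_4+_e K_4$ that is realisable in $\ell_2$ but not in $\ell_\infty^2$, and you stop at describing what such a witness would have to look like. Your ``tightness pattern'' enumeration is a legitimate decision procedure in principle, but run on no data it proves nothing; as you yourself note, the natural symmetric candidates (e.g.\ spokes $1$, rim $\sqrt{2}$ for $W_4$) are $\ell_\infty^2$-realisable, so producing asymmetric lengths that defeat \emph{every} coordinate decomposition is precisely the nontrivial step. The appeal to \Cref{t:petty} for $K_4+_e K_4$ also does not do the work you want: it only bounds the size of equilateral sets and says nothing about uniqueness of configurations, and the edge lengths one needs here are not equilateral in any case. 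The paper closes this gap by importing specific integer edge lengths from \cite{fhjv2017}, where non-realisability in $\ell_\infty^2$ for those lengths is already proved, and then exhibiting explicit completions $M(W_4)$ and $M(K_4+_e K_4)$ of the corresponding partial squared-distance matrices (found by semidefinite programming) that are verified to be Euclidean distance matrices by an exact eigenvalue check via \Cref{t:schoen}. To complete your proof you would need either to cite that non-realisability result and supply matching $\ell_2$-realisable data, or to carry out your tightness-pattern analysis on an explicit choice of lengths.
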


\begin{proof}
By  \Cref{p:isom}, we may assume $X= \ell_\infty^2$.
    By \Cref{t:linfl2},
    for the forward implication it suffices to consider the case where   $Y  = \ell_2$.
    
    The matrix $M(W_4)$ below is a completion of the partial matrix formed from the squares of the edge lengths of $W_4$ given in \Cref{fig:w5joinedgelengths}, where bold numbers represent entries added to complete the partial matrix:
    \begin{align*}
        M(W_4) :=
        \begin{bmatrix}
            0 & 324 &  \mathbf{245} & 576 & 40000 \\
            324 & 0 & 289 & \mathbf{294} &  40000\\
            \mathbf{245} & 289 & 0 & 400 & 40000 \\
            576 & \mathbf{294} & 400 & 0 & 40000 \\
            40000 & 40000 & 40000 & 40000 & 0
        \end{bmatrix}.
    \end{align*}
    Similarly,
    the matrix $M(K_4+_e K_4)$ is a completion of the partial matrix formed from the squares of the edge lengths of $K_4+_e K_4$ given in \Cref{fig:w5joinedgelengths},
    where bold numbers represent the added values:
    \begin{align*}
        M(K_4+_e K_4) :=
        \begin{bmatrix}
            0 & \mathbf{3003} & 5041 & 5929 & 5476 & 2116 \\
            \mathbf{3003} & 0 & 2809 & 7744 & 6241 & 1296 \\
            5041 & 2809 & 0 & 6084 & \mathbf{4765} & \mathbf{2595} \\
            5929 & 7744 & 6084 & 0 & \mathbf{6545} & \mathbf{4655} \\
            5476 & 6241 & \mathbf{4765} & \mathbf{6545} & 0 & 6241 \\
            2116 & 1296 & \mathbf{2595} & \mathbf{4655} & 6241 & 0
        \end{bmatrix}.
    \end{align*}
    By applying \Cref{t:schoen} combined with a simple computational eigenvalue check,
    we see that both $M(W_4)$ and $M(K_4+_e K_4)$ are Euclidean distance matrices.
    Hence there exist realisations $p$ and $q$ of $W_4$ and $K_4+_e K_4$ respectively in $\ell_2$ that realise the edge lengths shown in \Cref{fig:w5joinedgelengths}.
    It was shown in \cite{fhjv2017} that there exist no realisations of either graph in $\ell_\infty^2$ that satisfy the given edge lengths in \Cref{fig:w5joinedgelengths}.
    Hence $W_4$ and $K_4+_e K_4$ are not $(\ell_\infty^2,\ell_2)$-flattenable,
    and so any $(\ell_\infty^2,\ell_2)$-flattenable graph must contain no $W_4$ or $K_4+_e K_4$ minor.
    
   For the converse, any graph that contains no $W_4$ or $K_4+_e K_4$ minor is $(\ell_\infty^2,\ell_\infty)$-flattenable by \Cref{t:infflat}.
    Hence any graph that contains no $W_4$ or $K_4+_e K_4$ minor is $(\ell_\infty^2,Y)$-flattenable by \Cref{t:linfl2}.
\end{proof}

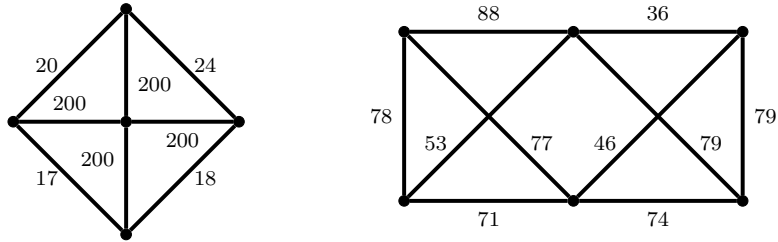
\begin{figure}[htp]
\begin{center}
    \begin{tikzpicture}[scale=1.5]
        \node[vertex] (a) at (2,0) {};
    	\node[vertex] (b) at (1,-1) {};
        \node[vertex] (c) at (1,1) {};
    	\node[vertex] (d) at (0,0) {};
    	
        \node[vertex] (e) at (1,0) {};
    	
    	\draw[edge] (e)to node[below,labelsty] {200} (a);
    	\draw[edge] (e)to node[above left,labelsty] {200} (b);
    	\draw[edge] (e)to node[below right,labelsty] {200} (c);
    	\draw[edge] (e)to node[above,labelsty] {200} (d);
    	\draw[edge] (a)to node[right,labelsty] {18} (b);
    	\draw[edge] (a)to node[right,labelsty] {24} (c);
    	\draw[edge] (b)to node[left,labelsty] {17} (d);
    	\draw[edge] (c)to node[left,labelsty] {20} (d);
    	
    	\filldraw[black] (0,-1.5) circle {};
    \end{tikzpicture}\qquad \qquad
    \begin{tikzpicture}[scale=1.125]
        \node[vertex] (a) at (-2,-1) {};
    	\node[vertex] (b) at (-2,1) {};
    	
        \node[vertex] (c) at (0,1) {};
    	\node[vertex] (d) at (0,-1) {};
    	
        \node[vertex] (e) at (2,-1) {};
    	\node[vertex] (f) at (2,1) {};
    	
    	\node[blankvertex] (blank) at (0,-2) {};
    	
    	\draw[edge] (a)to node[left,labelsty] {78} (b);
    	\draw[edge] (a)to node[xshift=-20pt,yshift=-10pt,labelsty] {53} (c);
    	\draw[edge] (a)to node[below,labelsty] {71} (d);
    	\draw[edge] (b)to node[above,labelsty] {88} (c);
    	\draw[edge] (b)to node[xshift=20pt,yshift=-10pt,labelsty] {77} (d);
    	%\draw[edge] (c)edge(d);
        
    	\draw[edge] (c)to node[xshift=20pt,yshift=-10pt,labelsty] {79} (e);
    	\draw[edge] (c)to node[above,labelsty] {36}(f);
    	\draw[edge] (d)to node[below,labelsty] {74} (e);
    	\draw[edge] (d)to node[xshift=-20pt,yshift=-10pt,labelsty] {46} (f);
    	\draw[edge] (e)to node[right,labelsty] {79}(f);
    \end{tikzpicture}
\end{center}
\vspace{-0.5cm}
\caption{The edge lengths assigned to $W_4$ (left) and $K_4 +_e K_4$ (right).}
\label{fig:w5joinedgelengths}\textit{}
\end{figure}

\begin{remark}\label{rem:sdp2}
    The matrices $M(W_4)$ and $M(K_4 +_e K_4)$ in the proof of \Cref{t:l2tolinf} were obtained as follows. We first applied a method similar to what is sketched out in \cite{7298562} using a semidefinite program solver in Julia \cite{convexjl} to obtain completed matrices.
    After this, we then rounded the entries of each matrix to obtain $M(W_4)$ and $M(K_4 +_e K_4)$.
    We next checked that both $M(W_4)$ and $M(K_4 +_e K_4)$ were Euclidean distance matrices by computing each one's unique $A$ matrix (as described in \Cref{t:schoen})  and checking if it were positive semidefinite.
    This last step was performed by computing the exact eigenvalues of each $A$ matrix.
    % More precisely, consider the graph $W_4$ in \Cref{fig:w5joinedgelengths}.  
    % Let its vertex set be $\{1,2,3,4,5\}$, where vertex $5$ has degree four, and, for each of its edges $(i,j)$, let $m_{ij}$ be the square of the length on the edge $(i,j)$.  
    % As the vertex $5$ is adjacent to all other vertices, the value $a_{ij} = \frac{1}{2}(m_{i,5} + m_{j,5} - m_{i,j})$ is well-defined for each edge $ij$.  
    % To obtain $M(W_4)$, we first used the SDP solver to obtain a $4 \times 4$ symmetric positive semidefinite matrix $A$ such that $A_{i,j} = a_{i,j}$ for each edge $(i,j)$.  
    % Then, \textcolor{red}{revising}
    %
    % to obtain the matrix $M(G)$ for $G \in \{W_4, K_4+_e K_4\}$, we began with partial matrices formed from the squares of the edge lengths of $G$ in \Cref{fig:w5joinedgelengths},
    % with missing entries for each non-edge of the graphs;
    % this is exactly the matrix $M(G)$ without the bold entries.
    % Then, we used the known entries of these partial matrices to compute partial Gram matrices \textcolor{red}{How is this done exactly?}.  
    % We then used the above-mentioned solver to complete these matrices to Gram matrices, which correspond to the matrix $A$ in \Cref{t:schoen}.  
    % Finally, we extracted the entries of $M(W_4)$ and $M(K_4 +_e K_4)$ from these Gram matrices.  
    % See the proof of Theorem 3.3.1 in \cite{so2007semidefinite} for the relationship between the entries of distance matrices and Gram matrices \textcolor{red}{I don't understand this reference. Isn't this just \Cref{t:schoen}?}.  
\end{remark}

\subsection{Flattening into \texorpdfstring{$\ell_2^2$}{2D L2} from any normed space}

\Cref{p:isomembed} and \Cref{t:2d1} characterise exactly which graphs are $(\ell_2^2,Y)$-flattenable when $\dim Y \neq 2$;
i.e., all graphs if $\dim Y=1$, $K_4$ minor-free graphs if $\dim Y \geq 3$.
We also know from \Cref{t:strconv} that, when $Y$ is a normed plane that is not strictly convex,
a graph is $(\ell_2^2,Y)$-flattenable if and only if it contains no $K_4$ minor.
We now improve this latter result by dropping the requirement that $Y$ is not strictly convex, and hence obtain a full characterisation for the $(\ell_2^2,Y)$-flattenable graphs for any choice of $Y$.
We begin with the following two slight adaptations of results of Norlander and of Alonso and Ben\'{i}tez respectively.

\begin{theorem}[{\cite[Theorem, pg.~15]{nor}}]\label{t:norlander}
    Let $Y$ be a normed space and $0 < \varepsilon < 2$.
    Given the closed non-empty set
    \begin{equation*}
        D(\varepsilon ) := \Big\{ \| a + b\|_Y : \| a - b\|_Y = \varepsilon , ~ \| a\|_Y = \| b\|_Y = 1 \Big\} ,
    \end{equation*}
    the following inequality holds:
    \begin{equation*}
        \inf D(\varepsilon) \leq \sqrt{4 -\varepsilon^2} \leq \sup D(\varepsilon).
    \end{equation*}
\end{theorem}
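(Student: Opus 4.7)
The plan is to reduce to the two-dimensional case and then use a continuity argument on the configuration space of unit-vector pairs.

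First, since any pair $a, b$ of unit vectors in $Y$ with $\|a - b\|_Y = \varepsilon$ spans a subspace of $Y$ of dimension at most $2$, I would reduce to the case where $\dim Y = 2$. In this setting the unit sphere $S$ is a centrally symmetric closed convex Jordan curve, and the configuration space $P(\varepsilon) := \{(a, b) \in S \times S : \|a - b\|_Y = \varepsilon\}$ is a nonempty compact subset of $S \times S$. A first task is to verify that $P(\varepsilon)$ is path-connected, so that $D(\varepsilon)$, being the continuous image of $P(\varepsilon)$ under $(a, b) \mapsto \|a + b\|_Y$, is a closed interval in $[0, 2]$. Path-connectedness follows by sliding $a$ around $S$ while continuously updating $b$ to maintain $\|a - b\|_Y = \varepsilon$, using the fact that as $b$ traverses $S$ the continuous function $b \mapsto \|a - b\|_Y$ ranges from $0$ at $b = a$ to $2$ at $b = -a$.

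The problem then reduces to showing that the closed interval $D(\varepsilon) = [m, M]$ satisfies $m \leq \sqrt{4 - \varepsilon^2} \leq M$, i.e., that $D(\varepsilon)$ straddles the Euclidean benchmark. The Euclidean case is immediate from the parallelogram identity $\|a + b\|^2 + \|a - b\|^2 = 4$, which forces $D_{\ell_2}(\varepsilon) = \{\sqrt{4 - \varepsilon^2}\}$. For a general norm I would construct a continuous loop of pairs in $P(\varepsilon)$, realised by sliding $a$ once around $S$, and exploit the natural symmetries $(a, b) \mapsto (b, a)$ and $(a, b) \mapsto (-a, -b)$ of $P(\varepsilon)$ (both of which preserve $\|a + b\|_Y$) to show via an averaging argument that the mean value of $\|a(t) + b(t)\|_Y^2 + \varepsilon^2$ along the loop equals $4$. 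The intermediate value theorem would then force $\|a + b\|_Y^2 + \varepsilon^2$ to take values both $\geq 4$ and $\leq 4$ along the loop, yielding the conclusion.

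The main obstacle is twofold. First, when the norm on $Y$ is not strictly convex, the choice of $b$ given $a$ may be multi-valued (the sphere can contain entire arcs of solutions), so the loop has to be defined as a carefully chosen single-valued section through $P(\varepsilon)$ rather than a naive ``successor'' map $a \mapsto \psi(a)$. Second, a general two-dimensional normed plane carries no canonical rotation-invariant measure, so identifying the correct averaging measure for the Euclidean mean-value identity requires care; a natural candidate is the measure induced on $S$ by the support function of $B_Y$, which interacts compatibly with the two symmetries above. Once this framework is in place, the rest of the argument is a routine application of the intermediate value theorem to the continuous function $\|a(t) + b(t)\|_Y$ along the constructed loop.
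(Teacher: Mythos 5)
This statement is not proved in the paper: it is imported from Norlander \cite{nor}, with only the remark that Norlander's method yields the first inequality as well as the second one he stated. So your attempt has to be judged against the known argument, which is Nordlander's signed-area computation in the plane.

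Your reduction to a two-dimensional subspace is fine, and the interval structure of $D(\varepsilon)$ is not actually needed for the stated inequalities (it suffices to exhibit one configuration on each side of $\sqrt{4-\varepsilon^2}$). The genuine gap is precisely the step you flag as the ``main obstacle'': the assertion that the mean of $\|a(t)+b(t)\|_Y^2+\varepsilon^2$ over the loop equals $4$ for a suitable probability measure. That identity is the entire content of the theorem, and it is neither proved nor correctly located: the measure ``induced by the support function of $B_Y$'' is not the one that makes it true, and the symmetries $(a,b)\mapsto(b,a)$ and $(a,b)\mapsto(-a,-b)$ do not produce it. What does work is the signed-area identity $A(u+v)+A(u-v)=2A(u)+2A(v)$ for closed planar curves, where $A(c)=\tfrac12\oint\det\bigl(c,\dot c\bigr)\,dt$. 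Taking $u=\tfrac12(a+b)$ and $v=\tfrac12(a-b)$ for a continuous selection $b(t)$ with $\|a(t)-b(t)\|_Y=\varepsilon$, where both $a$ and $b$ traverse the unit circle $S$ once positively, gives $A(u)+A(v)=|B_Y|$; since the difference curve traverses $\tfrac{\varepsilon}{2}S$ once, $A(v)=\tfrac{\varepsilon^2}{4}|B_Y|$ and hence $A(u)=\bigl(1-\tfrac{\varepsilon^2}{4}\bigr)|B_Y|$. To convert this into the two inequalities one still needs winding-number control: writing $u(t)=\|u(t)\|_Y\,\sigma(t)$ with $\sigma(t)\in S$, one has $A(u)=\tfrac12\oint\|u\|_Y^2\det(\sigma,\dot\sigma)\,dt$, and the monotonicity of the angular part (which holds because the angles of $a(t)$ and $b(t)$ are non-decreasing and differ by less than $\pi$ when $\varepsilon<2$) gives $A(u)\leq r^2|B_Y|$ whenever $\|u(t)\|_Y\leq r$ for all $t$, and the dual bound for the infimum. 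So your global strategy (a loop in configuration space forcing an integral identity against the Euclidean benchmark) is the right one, but the decisive identity, the measure realizing it (the area-sweep measure $\tfrac12\det(\sigma,\dot\sigma)\,dt$, not a support-function measure), and the positivity/winding argument that legitimizes treating it as an average are all missing; without them the proof does not close.
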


\begin{remark}
    Although Norlander originally only stated the second inequality given in \Cref{t:norlander}, the method used to prove the result also gives the first inequality.
\end{remark}

\begin{corollary}[{\cite[Corollary, pg.~323]{ab88}}]\label{c:ab}
    Let $Y$ be a normed plane and define $D(\varepsilon)$ as in \Cref{t:norlander}.
    If $Y$ is not isometrically isomorphic to $\ell_2^2$, then for each $0 < \varepsilon < 2$ which is not an element of the set
    \begin{equation*}
        S := \Big\{ 2 \cos(k\pi /2n) : n,k\in \mathbb{N},~ 1\leq k\leq n \Big\},
    \end{equation*}
    we have $\inf D(\varepsilon) < \sup D(\varepsilon)$.
\end{corollary}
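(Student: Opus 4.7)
The plan is to prove the contrapositive: assuming $\inf D(\varepsilon_0)=\sup D(\varepsilon_0)$ for some $\varepsilon_0\in(0,2)$, we show that either $\varepsilon_0\in S$ or $Y$ is isometrically isomorphic to $\ell_2^2$. By \Cref{t:norlander} the common value must equal $c := \sqrt{4-\varepsilon_0^2}$.

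\textbf{Setup.} Parameterise the unit sphere $S_Y := \{x \in Y : \|x\|_Y = 1\}$ by an orientation-preserving continuous closed curve. For each $a \in S_Y$, the set $\{b \in S_Y : \|a-b\|_Y = \varepsilon_0\}$ is non-empty since $0 < \varepsilon_0 < 2$; in the strictly convex case it consists of exactly two points, and choosing one consistently with orientation yields a continuous map $R : S_Y \to S_Y$. The singleton hypothesis becomes $\|a + R(a)\|_Y = c$ for every $a \in S_Y$.

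\textbf{Rotation number dichotomy.} Since $R$ is an orientation-preserving homeomorphism of $S_Y \cong \mathbb{S}^1$, it has a rotation number $\rho \in \mathbb{R}/\mathbb{Z}$. The exceptional set $S$ is designed to capture the rational rotation numbers of the form $k/(2n)$: in that case $R^{2n}$ is the identity and the orbit of any $a$ is the vertex set of a centrally symmetric $2n$-gon inscribed in $S_Y$, with $R^n(a) = -a$. Expanding $\varepsilon_0 = \|a - R(a)\|_Y$ using central symmetry and vector sums along the orbit should identify $\varepsilon_0$ as $2\cos(k\pi/2n) \in S$.

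\textbf{Irrational case implies Euclidean.} If $\rho$ is irrational, appropriate regularity of $R$ (derived from the convexity of the unit ball) gives every $R$-orbit a dense set of chord-lengths in $(0,2)$. Iterating the singleton hypothesis, the key rigidity step is to show that $\|a - R^n(a)\|_Y$ and $\|a + R^n(a)\|_Y$ depend only on $n$, not on $a$. Then $D(\varepsilon)$ is a singleton for $\varepsilon$ in a dense subset of $(0,2)$, and continuity of the norm upgrades this to $D(\varepsilon)$ being a singleton for every $\varepsilon \in (0,2)$. Combined with \Cref{t:norlander}, this yields $\|a + b\|_Y^2 + \|a - b\|_Y^2 = 4$ for all unit $a, b \in Y$; by homogeneity this is the parallelogram law, and the Jordan--von Neumann theorem then identifies $Y$ isometrically with $\ell_2^2$.

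The main obstacle is the rigidity step in the irrational case: showing that the chord-sum $\|a + R^n(a)\|_Y$ is independent of the base point $a$. The natural attack is to apply the singleton identity to consecutive pairs $(R^k(a), R^{k+1}(a))$ and combine with central symmetry, but transferring pointwise information into an orbit-wide invariance appears to need careful use of the irrationality of $\rho$. A secondary technicality is the non-strictly-convex case, where $R$ becomes multi-valued and the rotation number needs extra care to define.
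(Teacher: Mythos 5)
The paper does not prove this corollary at all: it is imported verbatim (as a ``slight adaptation'') from Alonso and Ben\'{i}tez \cite[Corollary, pg.~323]{ab88}, so there is no in-paper argument to compare yours against. Judged on its own terms, your sketch has the right overall shape --- the exceptional set $S$ really does arise from centrally symmetric $2n$-gon orbits, and the generic case really does force the Euclidean identity --- but it is not a proof, and the gaps are genuine rather than cosmetic. The most serious one is the one you flag yourself: the rigidity step in the irrational-rotation case, i.e.\ showing that $\|a+R^n(a)\|_Y$ depends only on $n$. That is the entire content of the Alonso--Ben\'{i}tez result; without it nothing closes, and ``appropriate regularity of $R$'' plus ``careful use of the irrationality of $\rho$'' is a placeholder, not an argument.

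The rational case is also not correct as written. A circle homeomorphism with rotation number $k/(2n)$ need \emph{not} satisfy $R^{2n}=\mathrm{id}$; rational rotation number only guarantees the existence of \emph{some} periodic orbit of that period, while other orbits may be merely asymptotic to it. You would first have to use the singleton hypothesis $\|a+R(a)\|_Y\equiv c$ to force $R$ to be conjugate to a rigid rotation (or to argue directly on a single periodic orbit) before you can claim every orbit is a centrally symmetric $2n$-gon with $R^n(a)=-a$ and extract $\varepsilon_0=2\cos(k\pi/2n)$. Two further repairs are needed: the multivalued behaviour of $R$ on flat arcs of the unit sphere cannot be deferred, since the non-strictly-convex planes are precisely where counterexamples would live; and the final step ``$\|a+b\|_Y^2+\|a-b\|_Y^2=4$ for unit $a,b$, hence by homogeneity the parallelogram law'' is false as stated, because $a$ and $b$ scale independently in the parallelogram law. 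The unit-vector identity \emph{is} a known characterisation of inner product spaces (Day/Schoenberg), so this last point is fixable by citing that theorem instead of Jordan--von Neumann, but the first two gaps mean the proposal does not establish the corollary.
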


\begin{theorem}\label{t:ell2}
	Let $Y$ be any normed space.
    \begin{enumerate}
        \item If $\dim Y = 1$ or $Y$ is a normed plane that is isometrically isomorphic to $\ell_2^2$,
        then every graph is $(\ell_2^2,Y)$-flattenable.
        \item If $\dim Y \geq 3$ or $Y$ is a normed plane that is not isometrically isomorphic to $\ell_2^2$,
        then a graph is $(\ell_2^2,Y)$-flattenable if and only if it contains no $K_4$ minor.
    \end{enumerate}
\end{theorem}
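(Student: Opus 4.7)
The plan is to dispose of the easy cases by invoking earlier results in the paper, and to handle the one non-trivial case by producing an \emph{obstructing} realisation of $K_4$ in $Y$ that cannot be reproduced in $\ell_2^2$.

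If $\dim Y = 1$, or if $Y$ is isometrically isomorphic to $\ell_2^2$, then $Y$ embeds isometrically into $\ell_2^2$, so every graph is $(\ell_2^2, Y)$-flattenable by \Cref{p:isomembed}; this settles Part (1). For Part (2) with $\dim Y \geq 3$, since $\ell_2^2$ is strictly convex and in particular not isometrically isomorphic to $\ell_\infty^2$, \Cref{t:2d1} applies directly.

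This leaves the case where $Y$ is a normed plane not isometrically isomorphic to $\ell_2^2$. The ``if'' direction in this case follows immediately from \Cref{l:2d2}; for the converse it suffices, by minor-closure of flattenability, to show that $K_4$ itself is not $(\ell_2^2, Y)$-flattenable. To produce the required obstruction, I will apply \Cref{c:ab} to choose any $\varepsilon \in (0,2) \setminus S$; this gives $\inf D(\varepsilon) < \sup D(\varepsilon)$, so I can pick some $\delta \in D(\varepsilon)$ with $\delta \neq \sqrt{4-\varepsilon^2}$. Selecting $a, b \in Y$ with $\|a\|_Y = \|b\|_Y = 1$, $\|a - b\|_Y = \varepsilon$, and $\|a + b\|_Y = \delta$, the realisation of $K_4$ given by $p_{v_1} = -a$, $p_{v_2} = a$, $p_{v_3} = -b$, $p_{v_4} = b$ has the symmetric edge-length pattern $d_{12} = d_{34} = 2$, $d_{13} = d_{24} = \varepsilon$, $d_{14} = d_{23} = \delta$.

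The crux is to verify that this pattern has no realisation in $\ell_2^2$. Given any purported realisation, I translate so the centroid is at the origin and write $p_{v_1} = w + x$, $p_{v_2} = w - x$, $p_{v_3} = -w + y$, $p_{v_4} = -w - y$; the constraints $d_{12} = d_{34} = 2$ give $\|x\|_2 = \|y\|_2 = 1$. Expanding $d_{13} = d_{24}$ and $d_{14} = d_{23}$ in the inner product yields $w \cdot (x-y) = 0$ and $w \cdot (x+y) = 0$, hence $w \cdot x = w \cdot y = 0$. In $\mathbb{R}^2$ this forces either $w = 0$, in which case the parallelogram law gives $\varepsilon^2 + \delta^2 = 4$, or else $y = \pm x$, giving $\delta^2 - \varepsilon^2 = 4$ or $\varepsilon^2 - \delta^2 = 4$ respectively. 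The latter two are excluded by the triangle inequality $D(\varepsilon) \subseteq [0,2]$ (so $\delta \leq 2$) together with $\varepsilon < 2$, so only the parallelogram relation is possible, contradicting $\delta \neq \sqrt{4-\varepsilon^2}$.

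The main obstacle is this three-way case analysis of $K_4$ realisations in $\ell_2^2$ with the symmetric distance pattern. A Cayley-Menger determinant computation would work too but is messier; the centroid-and-symmetric-parametrisation approach above cleanly isolates the parallelogram, rectangle, and ``elongated'' configurations and reduces each to a single algebraic identity.
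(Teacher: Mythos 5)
Your proof is correct and follows essentially the same route as the paper: the easy cases are handled via \Cref{p:isomembed} and \Cref{t:2d1}, and the remaining case uses \Cref{t:norlander} and \Cref{c:ab} to produce a parallelogram in $Y$ whose diagonals violate the Euclidean parallelogram relation. The only differences are cosmetic: the paper tests the configuration $0,a,b,a+b$ (four sides of length $1$, diagonals $\varepsilon$ and $\|a+b\|$) rather than your $\pm a,\pm b$, and it merely asserts the classification of the corresponding $K_4$ realisations in $\ell_2^2$, whereas your centroid parametrisation verifies the analogous non-realisability claim explicitly and correctly.
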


\begin{proof}
	As noted above, the remaining case to check is exactly when $Y$ is a normed plane that is not isometrically isomorphic to $\ell_2^2$.
    By \Cref{l:2d2},
	if $G$ contains no $K_4$ minor then it is $(\ell_2^2,Y)$-flattenable.
	Suppose $G$ contains $K_4$ as a minor.
	Since $(\ell_2^2,Y)$-flattenability is a minor-closed property,
	it suffices to assume that $G\cong K_4$.
	Label the vertices of $G$ by $v_1,v_2,v_3,v_4$.
	
	Choose $\varepsilon \in (0,2) \setminus S$,
	with $S$ being the set defined in \Cref{c:ab}.
	It is easy to see that, up to isometry,
	there exist exactly two realisations of $K_4$ in $\ell_2^2$ such that the edges $v_1v_2,v_1v_3,v_2v_4,v_3v_4$ have length 1 and the edge $v_2 v_3$ has length $\varepsilon$:
	the realisation $p$ where $\|p_{v_1} - p_{v_4}\|_Y = \sqrt{4 - \varepsilon^2}$,
	and the realisation $p'$ where $\|p'_{v_1} - p'_{v_4}\|_Y = 0$.
	By \Cref{t:norlander} and \Cref{c:ab},
	there exists $a,b \in Y$ such that $\|a\|_Y=\|b\|_Y = 1$, $\|a-b\|_Y = \varepsilon$ and $\|a+b\|_Y \neq \sqrt{4 - \varepsilon^2}$.
	Since $\|a-b\|_Y < 2$,
	we also have that $\|a+b\|_Y \neq 0$.
	Define $q$ to be the realisation of $K_4$ in $Y$ with $q_{v_1}=0$, $q_{v_2} = a$, $q_{v_3} = b$ and $q_{v_4} = a+b$.
	Since 
	\begin{align*}
	    \|q_{v_1} - q_{v_2}\|_Y = \|q_{v_1} - q_{v_3}\|_Y = \|q_{v_2} - q_{v_4}\|_Y = \|q_{v_3} - q_{v_4}\|_Y = 1
	\end{align*}
	and $\|q_{v_2} - q_{v_3}\|_Y = \|a-b\|_Y = \varepsilon$,
	but $\|q_{v_1}-q_{v_4}\|_Y = \|a+b\|_Y \notin \{0,\sqrt{4 - \varepsilon^2}\}$,
	we have that $f_{G,Y}(q) \notin f_{G,\ell_2^2}((\ell_2^2)^V)$,
	i.e., $G$ is not $(\ell_2^2,Y)$-flattenable.
\end{proof}

\section{Flattenability for countably infinite graphs}\label{sec:infinite}

For this final section we shall now allow a graph $G=(V,E)$ to have countably infinite vertex and edge sets.
Our definitions of flattenability extend immediately to countably infinite graphs.

% \textcolor{red}{SEAN: A question for Derek: did you and Steve create the concept of towers of graphs or was it previously known?}

\begin{definition}
	A \emph{tower} in $G$ is a sequence of finite subgraphs $(G_n)_{n \in \mathbb{N}}$ with $G_n=(V_n,E_n)$,
	where $V_n \subset V_{n+1}$ and $E_n \subset E_{n+1}$ for each $n \in \mathbb{N}$.
	A tower is \emph{complete} if $\bigcup_{n \in \mathbb{N}} V_n = V$ and $\bigcup_{n \in \mathbb{N}} E_n = E$.
\end{definition}
The following lemma is a well-known application of Tychonoff's theorem. We provide the proof for completeness.
\begin{lemma}\label{l:inv}
	Let $(A_n)_{n \in \mathbb{N}}$ be a sequence of non-empty compact Hausdorff spaces where for each $n \leq m$ there exists a continuous map $\pi_{n,m} :A_m \rightarrow A_n$.
	Suppose that $\pi_{n,n}$ is the identity map and $\pi_{n,\ell} = \pi_{n,m} \circ \pi_{m ,\ell}$ for all $n \leq m \leq \ell$.
	Then the inverse limit
	\begin{align*}
		A := \left\{ (a_n)_{n \in \mathbb{N}} \in \prod_{n \in \mathbb{N}} A_n : \pi_{n,m}(a_m)=a_n \text{ for all } n \leq m \right\}
	\end{align*}
	is a non-empty compact subset of $\prod_{n \in \mathbb{N}} A_n$.
\end{lemma}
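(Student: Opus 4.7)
The plan is to first invoke Tychonoff's theorem to place everything inside a compact Hausdorff ambient space, then show that $A$ is closed (giving compactness), and finally use the finite intersection property to show non-emptiness.

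First I would note that $P := \prod_{n \in \mathbb{N}} A_n$ is compact Hausdorff by Tychonoff's theorem, since it is a product of compact Hausdorff factors. For each pair $n \leq m$, define
\begin{equation*}
    C_{n,m} := \left\{ (a_k)_{k \in \mathbb{N}} \in P : \pi_{n,m}(a_m) = a_n \right\}.
\end{equation*}
Since the coordinate projections $P \to A_n$ and $P \to A_m$ are continuous and $\pi_{n,m}$ is continuous, the map $(a_k) \mapsto (\pi_{n,m}(a_m), a_n)$ from $P$ to $A_n \times A_n$ is continuous. The diagonal of $A_n \times A_n$ is closed because $A_n$ is Hausdorff, so $C_{n,m}$ is a closed subset of $P$. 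Hence $A = \bigcap_{n \leq m} C_{n,m}$ is closed in the compact space $P$, and therefore compact.

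It remains to show $A$ is non-empty. For each $N \in \mathbb{N}$, let $B_N := \bigcap_{n \leq m \leq N} C_{n,m}$, which is closed in $P$. I would show $B_N$ is non-empty by choosing any $a_N \in A_N$ (possible since $A_N$ is non-empty), setting $a_n := \pi_{n,N}(a_N)$ for $1 \leq n \leq N$, and picking arbitrary elements of the remaining factors. The composition rule $\pi_{n,\ell} = \pi_{n,m} \circ \pi_{m,\ell}$ for $n \leq m \leq N = \ell$ (using $\pi_{N,N} = \mathrm{id}$) ensures all required compatibility relations hold, so $(a_k)_{k \in \mathbb{N}} \in B_N$. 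The sequence $(B_N)_{N \in \mathbb{N}}$ is nested decreasing, so it has the finite intersection property, and by compactness of $P$ the intersection $\bigcap_{N \in \mathbb{N}} B_N$ is non-empty. Since this intersection equals $A$, the result follows.

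I do not expect any serious obstacle here; this is a standard application of Tychonoff's theorem together with the finite intersection characterisation of compactness, and the only point that requires a small amount of care is verifying that any finite compatible tuple along the first $N$ coordinates can be extended to an element of $P$ lying in $B_N$, which the composition law handles immediately.
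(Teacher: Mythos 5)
Your proof is correct and follows essentially the same route as the paper: Tychonoff for compactness of the product, closedness of $A$ via the Hausdorff property, and non-emptiness via a nested decreasing family of non-empty closed sets. If anything your version is slightly cleaner, since you obtain the non-empty sets $B_N$ directly by pushing a point of $A_N$ down through the maps $\pi_{n,N}$, whereas the paper first extracts a point $x \in \bigcap_n \pi_{1,n}(A_n)$ and pins the first coordinate to $x$ — a step that is not actually needed for the Cantor/finite-intersection argument to go through.
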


\begin{proof}
	By Tychonoff's theorem,
	$\prod_{n \in \mathbb{N}} A_n$ is compact;
	further,
	since the product of Hausdorff spaces is Hausdorff then $\prod_{n \in \mathbb{N}} A_n$ is Hausdorff also.
	Since $A$ is a closed subset of $\prod_{n \in \mathbb{N}} A_n$ then it is a compact subset.
	
	For any $n \leq m$ we note that $\pi_{1,n}(A_n) \supset \pi_{1,m}(A_m)$.
	As each $\pi_{1,n}(A_n)$ is a non-empty compact subset of a Hausdorff space then there exists $x \in \cap_{n \in \mathbb{N}} \pi_{1,n}(A_n)$.	
	Define for each $k \in \mathbb{N}$ the set
	\begin{align*}
		B_k := \left\{ (a_n)_{n \in \mathbb{N}} \in \prod_{n \in \mathbb{N}} A_n : a_1=x,~ \pi_{n,k}(a_k)=a_n \text{ for all } n \leq k \right\}.
	\end{align*}
	By our choice of $x \in A_1$,
	each $B_k$ is a non-empty compact subset of $\prod_{n \in \mathbb{N}} A_n$,
	and we note that $B_k \supset B_\ell$ for all $k \leq \ell$.
	%As $\prod_{n \in \mathbb{N}} A_n$ is Hausdorff,
 By Cantor's intersection theorem,  $A = \bigcap_{n \in \mathbb{N}} B_n$ is a non-empty set.
\end{proof}

\begin{theorem}\label{t:infinite}
	Let $G =(V,E)$ be a connected graph with countable vertex set.
	Then the following are equivalent for any normed spaces $X,Y$ where $X$ is finite-dimensional:
	\begin{enumerate}
		\item\label{t:infinite1} $G$ is $(X,Y)$-flattenable.
		\item\label{t:infinite2} Every subgraph of $G$ is $(X,Y)$-flattenable.
		\item\label{t:infinite3} $G$ contains a complete tower of connected $(X,Y)$-flattenable subgraphs.
	\end{enumerate}
\end{theorem}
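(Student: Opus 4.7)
The plan is to establish the cycle of implications $\ref{t:infinite1} \Rightarrow \ref{t:infinite2} \Rightarrow \ref{t:infinite3} \Rightarrow \ref{t:infinite1}$, where the first two are routine and the third is the substantive step. For $\ref{t:infinite1} \Rightarrow \ref{t:infinite2}$, given any subgraph $H$ of $G$ and any realisation $q:V(H)\to Y$, I would extend $q$ arbitrarily to a realisation $\tilde q:V\to Y$ (for example by mapping all vertices outside $V(H)$ to a single point of $Y$). Applying \ref{t:infinite1} to $\tilde q$ produces $\tilde p:V\to X$ with $f_{G,X}(\tilde p)=f_{G,Y}(\tilde q)$, and the restriction $\tilde p|_{V(H)}$ then realises the edge lengths of $q$ on $H$ since $E(H)\subset E(G)$.

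For $\ref{t:infinite2} \Rightarrow \ref{t:infinite3}$, I would enumerate $V=\{v_1,v_2,\ldots\}$ and $E=\{e_1,e_2,\ldots\}$, and inductively grow $G_n$ so that it is a finite connected subgraph containing $v_1,\ldots,v_n$ and $e_1,\ldots,e_n$ together with their endpoints. Because $G$ is connected, any extra vertices and edges needed at step $n+1$ can be attached to $G_n$ via finite paths in $G$; each finite connected subgraph built in this way is $(X,Y)$-flattenable by \ref{t:infinite2}, and by construction $\bigcup V_n=V$ and $\bigcup E_n=E$.

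For the substantive implication $\ref{t:infinite3} \Rightarrow \ref{t:infinite1}$, I would use a compactness argument based on \Cref{l:inv}. Fix a realisation $q:V\to Y$ and a complete tower $(G_n)_{n\in\mathbb{N}}$ of connected flattenable subgraphs, and (after discarding finitely many initial terms if needed) fix a vertex $v_0\in V_1$. For each $n$, define
\begin{equation*}
    A_n := \bigl\{p\in X^{V_n} : p_{v_0}=0,\ f_{G_n,X}(p)=f_{G_n,Y}(q|_{V_n})\bigr\}.
\end{equation*}
Flattenability of $G_n$ (after translating so that $p_{v_0}=0$) gives $A_n\neq\emptyset$, and $A_n$ is closed in $X^{V_n}$ by continuity of $f_{G_n,X}$. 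Connectedness of $G_n$ lets us bound $\|p_v\|_X$, for each $p\in A_n$ and each $v\in V_n$, by the sum of the target edge lengths along any path from $v_0$ to $v$ in $G_n$ (via iterated triangle inequality), so $A_n$ is bounded; since $X$ is finite-dimensional, $A_n$ is therefore compact. The restriction maps $A_m\to A_n$ for $n\le m$ are continuous and satisfy the coherence conditions of \Cref{l:inv}, which then supplies a coherent sequence that glues to a realisation $p:V\to X$ with $p|_{V_n}\in A_n$ for every $n$. Completeness of the tower ensures every edge of $G$ lies in some $G_n$, so $f_{G,X}(p)=f_{G,Y}(q)$.

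The main obstacle is ensuring each $A_n$ is compact, and this is exactly where all three hypotheses enter: connectedness of $G_n$ is what allows pinning one vertex to $0$ to bound all realisations; finite-dimensionality of $X$ turns boundedness into compactness; and flattenability of $G_n$ is what makes each $A_n$ non-empty in the first place.
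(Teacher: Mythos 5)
Your proposal is correct and follows essentially the same route as the paper: the easy implications are handled the same way, and the substantive step \ref{t:infinite3} $\Rightarrow$ \ref{t:infinite1} uses the identical inverse-limit construction with the same sets $A_n$, the same pinning of $v_0$, and the same restriction maps fed into \Cref{l:inv}. Your additional detail on why $A_n$ is bounded (iterated triangle inequality along paths from $v_0$) correctly fills in what the paper leaves implicit.
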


\begin{proof}
	It is immediate that \ref{t:infinite1} $\Rightarrow$ \ref{t:infinite2} $\Rightarrow$ \ref{t:infinite3}.
	Suppose \ref{t:infinite3} holds,
	i.e.~there exists a complete tower $(G_n)_{n \in \mathbb{N}}$ where each $G_n$ is connected and $(X,Y)$-flattenable.
	Choose any $q \in Y^V$ and $v_0 \in V_1$.
	For each $n \in \mathbb{N}$,
	let 
	\begin{align*}
		A_n := \left\{ p \in X^{V_n} : p_{v_0}=0,~ f_{G_n,X}(p) = f_{G_n,Y}(q|_{V_n}) \right\}.
	\end{align*}
	As each $G_n$ is connected, $X$ is finite-dimensional and each $f_{G_n,X}$ is continuous,
	each set $A_n$ is a non-empty compact Hausdorff space.	
	For every $n\leq m$,
	define the continuous map
	\begin{align*}
		\pi_{n,m} :A_m \rightarrow A_n, ~ (p_v)_{v \in V_m} \mapsto (p_v)_{v \in V_n}.
	\end{align*}
	It is immediate that $\pi_{n,n}$ is the identity map and $\pi_{n,\ell} = \pi_{n,m} \circ \pi_{m ,\ell}$ for all $n \leq m \leq \ell$.
	Then by \Cref{l:inv} there exists $(p_n)_{n \in \mathbb{N}} \in \prod_{n \in \mathbb{N}} A_n$ where $\pi_{n,m}(p_m)=p_n$ for all $n \leq m$.
	If we define $p \in X^V$ to be the unique point where $p_v := (p_n)_v$ for $v \in V_n$ then $f_{G,X}(p)=f_{G,Y}(q)$.
	Hence $G$ is $(X,Y)$-flattenable as required.
\end{proof}

\begin{remark}
    It is worth noting that \Cref{t:infinite} requires that $X$ is finite-dimensional.
    To see why this is required,
    take $G$ to be the complete graph with a countably infinite set of vertices,
    $X = \ell_p$ for some $1 \leq p < 2$ with $p \neq 2$, and $Y= \ell_2$.
    By \Cref{t:linfl2}, every finite subgraph is $(\ell_p,\ell_2)$-flattenable.
    Suppose for contradiction the graph $G$ is also $(\ell_p,\ell_2)$-flattenable.
    Choose a realisation $q$ of $G$ such that the set $D := \{q_v : v \in V \}$ is a dense subset of $\ell_2$.
    By our assumption,
    there exists an equivalent realisation $\widetilde{q}$ in $\ell_p$.
    Hence there exists an isometry $f: D \rightarrow \ell_p$ with $f(0)= 0$.
    As $\ell_p$ is complete, we can extend this map to an isometry $h: \ell_2 \rightarrow \ell_p$.
    However,
    no such isometry can exist (see for example \cite[Section 4]{ab15}),
    which gives the desired contradiction.
\end{remark}

\section*{Acknowledgement}

This project was progressed during the Fields Institute Thematic Program on Geometric Constraint Systems, Framework Rigidity, and Distance Geometry. The authors are grateful to the Fields Institute for their hospitality and financial support.

S.\,D.\ was supported by the Heilbronn Institute for Mathematical Research.
E.\,K.\ and D.\,K.\ were partially supported by the Engineering and Physical Sciences Research Council [grant number EP/S00940X/1].
W.\,S. was partially supported by NSF DMS 1564480 and NSF DMS 1563234.

\bibliographystyle{plainurl}
\bibliography{ref}

\begin{thebibliography}{10}

\bibitem{ab15}
Fernando Albiac and Florent Baudier.
\newblock Embeddability of snowflaked metrics with applications to the
  nonlinear geometry of the spaces {$L_p$} and $\ell_p$ for $0<p<\infty$.
\newblock {\em Journal of Geometric Analysis}, 25:1--24, 2015.
\newblock \href {https://doi.org/10.1007/s12220-013-9390-0}
  {\path{doi:10.1007/s12220-013-9390-0}}.

\bibitem{ab88}
Javier Alonso and Carlos Benítez.
\newblock Some characteristic and non-characteristic properties of inner
  product spaces.
\newblock {\em Journal of Approximation Theory}, 55(3):318--325, 1988.
\newblock \href {https://doi.org/10.1016/0021-9045(88)90098-6}
  {\path{doi:10.1016/0021-9045(88)90098-6}}.

\bibitem{ball90}
Keith Ball.
\newblock Isometric embedding in $l_p$-spaces.
\newblock {\em European Journal of Combinatorics}, 11(4):305--311, 1990.
\newblock \href {https://doi.org/10.1016/S0195-6698(13)80131-X}
  {\path{doi:10.1016/S0195-6698(13)80131-X}}.

\bibitem{belkconn07}
Maria Belk and Robert Connelly.
\newblock Realizability of graphs.
\newblock {\em Discrete and Computational Geometry}, 37:125--137, 2007.
\newblock \href {https://doi.org/10.1007/s00454-006-1284-5}
  {\path{doi:10.1007/s00454-006-1284-5}}.

\bibitem{btv}
Geoffrey~J. Butler, J.~G. Timourian, and C.~Viger.
\newblock The rank theorem for locally {L}ipschitz continuous functions.
\newblock {\em Canadian Mathematical Bulletin}, 31(2):217–226, 1988.
\newblock \href {https://doi.org/10.4153/CMB-1988-034-8}
  {\path{doi:10.4153/CMB-1988-034-8}}.

\bibitem{clarke}
Frank~H. Clarke.
\newblock {\em Optimization and Nonsmooth Analysis}.
\newblock Society for Industrial and Applied Mathematics, 1990.
\newblock \href {https://doi.org/10.1137/1.9781611971309}
  {\path{doi:10.1137/1.9781611971309}}.

\bibitem{diestel}
Reihard Diestel.
\newblock {\em Graph Theory: 5th edition}.
\newblock Graduate Texts in Mathematics. Springer-Verlag, 2017.
\newblock \href {https://doi.org/10.1007/978-3-662-53622-3}
  {\path{doi:10.1007/978-3-662-53622-3}}.

\bibitem{7298562}
Ivan Dokmanic, Reza Parhizkar, Juri Ranieri, and Martin Vetterli.
\newblock Euclidean distance matrices: Essential theory, algorithms, and
  applications.
\newblock {\em IEEE Signal Processing Magazine}, 32(6):12--30, 2015.
\newblock \href {https://doi.org/10.1109/MSP.2015.2398954}
  {\path{doi:10.1109/MSP.2015.2398954}}.

\bibitem{dor}
Leonard~E. Dor.
\newblock Potentials and isometric embeddings in {$L_1$}.
\newblock {\em Israel Journal of Mathematics}, 24:260--268, 1976.
\newblock \href {https://doi.org/10.1007/BF02834756}
  {\path{doi:10.1007/BF02834756}}.

\bibitem{fhjm}
Samuel Fiorini, Tony Huynh, Gwena\"{e}l Joret, and Carole Muller.
\newblock Unavoidable minors for graphs with large
  \texorpdfstring{$\ell_p$}{lp}-dimension.
\newblock {\em Discrete and Computational Geometry}, 66:301--343, 2021.
\newblock \href {https://doi.org/h10.1007/s00454-021-00285-5}
  {\path{doi:h10.1007/s00454-021-00285-5}}.

\bibitem{fhjv2017}
Samuel Fiorini, Tony Huynh, Gwena\"{e}l Joret, and Antonios Varvitsiotis.
\newblock The excluded minors for isometric realizability in the plane.
\newblock {\em SIAM Journal on Discrete Mathematics}, 31(1):438--453, 2017.
\newblock \href {https://doi.org/10.1137/16M1064775}
  {\path{doi:10.1137/16M1064775}}.

\bibitem{frechet10}
Maurice Fr\'{e}chet.
\newblock Les dimensions d’un ensemble abstrait.
\newblock {\em Mathematische Annalen}, 68:145--168, 1910.
\newblock \href {https://doi.org/10.1007/BF01474158}
  {\path{doi:10.1007/BF01474158}}.

\bibitem{herz}
Carl~S. Herz.
\newblock A class of negative-definite functions.
\newblock {\em Proceedings of the American Mathematical Society},
  14(4):670--676, 1963.
\newblock \href {https://doi.org/10.2307/2034298} {\path{doi:10.2307/2034298}}.

\bibitem{hol}
Włodzimierz Holsztynski.
\newblock $\mathbb{R}^n$ as a universal metric space.
\newblock In {\em Notices of the AMS}, volume~25, 1978.

\bibitem{K15}
Derek Kitson.
\newblock Finite and infinitesimal rigidity with polyhedral norms.
\newblock {\em Discrete and Computational Geometry}, 54:390--411, 2015.
\newblock \href {https://doi.org/10.1007/s00454-015-9706-x}
  {\path{doi:10.1007/s00454-015-9706-x}}.

\bibitem{nor}
G{\"o}te Nordlander.
\newblock {The modulus of convexity in normed linear spaces}.
\newblock {\em Arkiv f\"{o}r Matematik}, 4(1):15--17, 1960.
\newblock \href {https://doi.org/10.1007/BF02591317}
  {\path{doi:10.1007/BF02591317}}.

\bibitem{petty}
Clinton~M. Petty.
\newblock Equilateral sets in {M}inkowski spaces.
\newblock {\em Proceedings of the American Mathematical Society},
  29(2):369--374, 1971.

\bibitem{rs95}
Neil Robertson and P.D. Seymour.
\newblock Graph minors {XIII.} {T}he disjoint paths problem.
\newblock {\em Journal of Combinatorial Theory, Series B}, 63(1):65--110, 1995.
\newblock \href {https://doi.org/10.1006/jctb.1995.1006}
  {\path{doi:10.1006/jctb.1995.1006}}.

\bibitem{rs04}
Neil Robertson and P.D. Seymour.
\newblock Graph minors {XX.} {W}agner's conjecture.
\newblock {\em Journal of Combinatorial Theory, Series B}, 92(2):325--357,
  2004.
\newblock \href {https://doi.org/10.1016/j.jctb.2004.08.001}
  {\path{doi:10.1016/j.jctb.2004.08.001}}.

\bibitem{rockafellar}
Ralph~Tyrell Rockafellar.
\newblock {\em Convex Analysis}.
\newblock Princeton University Press, Princeton, 1970.
\newblock \href {https://doi.org/doi:10.1515/9781400873173}
  {\path{doi:doi:10.1515/9781400873173}}.

\bibitem{rodl}
Vojtech R\"{o}dl and Andrzej Ruci\'{n}ski.
\newblock Bipartite coverings of graphs.
\newblock {\em Combinatorics, Probability and Computing}, 6(3):349–352, 1997.
\newblock \href {https://doi.org/10.1017/S0963548397003064}
  {\path{doi:10.1017/S0963548397003064}}.

\bibitem{schoenberg1935remarks}
Isaac~J Schoenberg.
\newblock Remarks to {M}aurice {F}r\'{e}chet's article ``sur la definition
  axiomatique d'une classe d'espace distances vectoriellement applicable sur
  l'espace de {H}ilbert''.
\newblock {\em Annals of Mathematics}, pages 724--732, 1935.
\newblock \href {https://doi.org/10.2307/1968654} {\path{doi:10.2307/1968654}}.

\bibitem{Shkarin}
Stanislav~A Shkarin.
\newblock Isometric embedding of finite ultrametric spaces in {B}anach spaces.
\newblock {\em Topology and its Applications}, 142(1):13--17, 2004.
\newblock \href {https://doi.org/10.1016/j.topol.2003.12.002}
  {\path{doi:10.1016/j.topol.2003.12.002}}.

\bibitem{SithWill}
Meera Sitharam and Joel Willoughby.
\newblock On flattenability of graphs.
\newblock In Francisco Botana and Pedro Quaresma, editors, {\em Automated
  Deduction in Geometry}, pages 129--148, Cham, 2015. Springer International
  Publishing.

\bibitem{convexjl}
Madeleine Udell, Karanveer Mohan, David Zeng, Jenny Hong, Steven Diamond, and
  Stephen Boyd.
\newblock Convex optimization in {J}ulia.
\newblock {\em SC14 Workshop on High Performance Technical Computing in Dynamic
  Languages}, 2014.
\newblock \href {http://arxiv.org/abs/1410.4821} {\path{arXiv:1410.4821}}.

\bibitem{witt}
Hans~S. Witsenhausen.
\newblock Minimum dimension embedding of finite metric spaces.
\newblock {\em Journal of Combinatorial Theory, Series A}, 42(2):184--199,
  1986.
\newblock \href {https://doi.org/10.1016/0097-3165(86)90089-0}
  {\path{doi:10.1016/0097-3165(86)90089-0}}.

\end{thebibliography}

\end{document}